\documentclass[sigconf]{acmart}
\AtBeginDocument{%
  \providecommand\BibTeX{{%
    \normalfont B\kern-0.5.em{\scshape i\kern-0.25em b}\kern-0.8em\TeX}}}

\copyrightyear{2023}
\acmYear{2023}

\acmConference[KDD '23]{}{Long Beach, CA, USA}

\usepackage{algorithm}
\usepackage{algorithmic}
\usepackage{enumitem}
\usepackage{hhline}
\usepackage{microtype}
\usepackage{graphicx}
\usepackage{subfigure}
\usepackage{amsmath}\allowdisplaybreaks
\usepackage{amsfonts,bm}


















\def\floor#1{\left\lfloor #1 \right\rfloor}
\def\1{\bf{1}}

\newcommand{\Norm}[1]{\left\| #1 \right\|}

\def\inner#1#2{\langle #1, #2 \rangle}


\def\va{{\bf{a}}}

\def\ve{{\bf{e}}}

\def\vg{{\bf{g}}}

\def\vv{{\bf{v}}}

\def\vx{{\bf{x}}}
\def\vy{{\bf{y}}}


\def\fO{{\mathcal{O}}}



\def\BR{{\mathbb{R}}}

\def\mA {{\bf A}}

\def\mH {{\bf H}}
\def\mI {{\bf I}}








\DeclareMathOperator*{\argmin}{arg\,min}

\usepackage{amsthm}
\usepackage{etoolbox}
\theoremstyle{plain}

\makeatletter
\def\Ddots{\mathinner{\mkern1mu\raise\p@
\vbox{\kern7\p@\hbox{.}}\mkern2mu
\raise4\p@\hbox{.}\mkern2mu\raise7\p@\hbox{.}\mkern1mu}}
\makeatother

\makeatletter
\newcommand*{\rom}[1]{\expandafter\@slowromancap\romannumeral #1@}
\makeatother

\newtheorem{theorem}{Theorem}[section]

\newtheorem{lemma}[theorem]{Lemma}

\theoremstyle{definition}
\newtheorem{definition}[theorem]{Definition}
\newtheorem{assumption}[theorem]{Assumption}
\theoremstyle{remark}
\newtheorem{remark}[theorem]{Remark}

\def\vx {{{\bf x}}}

\def\vy {{{\bf y}}}
\def\va {{{\bf a}}}
\def\BR{{\mathbb{R}}}

\def\e{{\bf e}}

\def\g{{\bf g}}

\def\H{{\bf H}}
\def\I{{\bf I}}

\def\v{{\bf v}}

\def\x{{\bf x}}

\def\y{{\bf y}}

\def\z{{\bf z}}
\def\0{{\bf 0}}
\def\1{{\bf 1}}

\def\OM{{\mathcal O}}

\def\RB{{\mathbb R}}

\def\server{\underline{server}:}
\def\client{\underline{$i$-th client}:}

\usepackage{algorithm}
\usepackage{algorithmic}
\usepackage{enumitem}
\usepackage{hhline}

\def \Floor #1{{\left \lfloor #1  \right \rfloor }}

\usepackage{multirow, hhline}
\usepackage[para,online,flushleft]{threeparttable}


\usepackage{threeparttable}
\usepackage{makecell}
\usepackage{multirow}

\usepackage{colortbl}
\definecolor{bgcolor}{rgb}{0.93,0.99,1}
\definecolor{bgcolor2}{rgb}{0.8,1,0.8}
\definecolor{bgcolor3}{rgb}{0.50,0.90,0.50}
\usepackage{tcolorbox}
\usepackage{pifont}
\definecolor{mydarkgreen}{RGB}{39,130,67}
\definecolor{mydarkred}{RGB}{192,25,25}

\newcommand{\cmark}{\ding{51}}%
\newcommand{\xmark}{\ding{55}}%

\usepackage{xspace}
\usepackage{bm}

\usepackage{relsize} 

\usepackage{relsize} 

\usepackage[capitalize,noabbrev]{cleveref}


\usepackage[textsize=tiny]{todonotes}


\begin{document}

\title[C2EDEN]{Communication Efficient Distributed Newton Method with Fast Convergence Rates}

\author{Chengchang Liu}
\affiliation{%
 \department{Department of Computer Science $\&$ Engineering}
  \institution{The Chinese University of Hong Kong}
   \country{}
}
\email{7liuchengchang@gmail.com}

\author{Lesi Chen}
\affiliation{%
  \department{School of Data Science}
  \institution{Fudan University}
   \country{}
}
\email{lschen19@fudan.edu.cn}

\author{Luo Luo}
\authornote{The corresponding author}
\affiliation{%
 \department{School of Data Science}
  \institution{Fudan University}
   \country{}
}
\email{luoluo@fudan.edu.cn}

\author{John C.S. Lui}
\affiliation{%
 \department{Department of Computer Science $\&$ Engineering}
  \institution{The Chinese University of Hong Kong}
   \country{}
}
\email{cslui@cse.cuhk.edu.hk}

\renewcommand{\shortauthors}{Liu et al.}
\begin{abstract}
We propose a communication and computation efficient second-order method for distributed optimization.
For each iteration, our method only requires $\mathcal{O}(d)$ communication complexity, where $d$ is the problem dimension. 
We also provide theoretical analysis to show the proposed method has the similar convergence rate as the classical second-order optimization algorithms.
Concretely, our method can find~$\big(\epsilon, \sqrt{dL\epsilon}\,\big)$-second-order stationary points for nonconvex problem by $\mathcal{O}\big(\sqrt{dL}\,\epsilon^{-3/2}\big)$ iterations, where $L$ is the Lipschitz constant of Hessian. Moreover, it enjoys a local superlinear convergence under the strongly-convex assumption. 
Experiments on both convex and nonconvex problems show that our proposed method performs significantly better than baselines.
\end{abstract}


 \keywords{Distributed Optimization, Second-Order Methods}

\maketitle
\section{Introduction}

Distributed optimization has received lots of attention in machine learning and data mining communities~\cite{molzahn2017survey,yang2019survey,boyd2011distributed,boyd2011convex,liu2017distributed,wangni2018gradient,smith2015l1,lee2018distributed,verbraeken2020survey}. 
Its major attractive feature is that it allows solving large-scale problems in a parallel fashion, which significantly improves the efficiency for training the models~\cite{li2013parameter,li2014communication,yan2014coupled}. 
This paper considers the distributed optimization problem of the form
\begin{align}\label{eq:obj}
    \min_{\x\in\RB^d} f(\x) \triangleq \frac{1}{n}\sum_{i=1}^n f_i(\x),
\end{align}
where $d$ is the problem dimension, $n$ is the number of clients and~$f_i$ is the smooth local function associated with the $i$-th client.
We focus on the scenario that each client can access its local function and communicate with a central server.

The first-order methods are popular in such distributed setting \cite{nesterov2018lectures,aji2017sparse,li2019convergence,mitra2021linear,karimireddy2020scaffold,mishchenko2022proxskip,konevcny2016federated}. 
For each iteration of these methods, the clients compute the gradients of their local functions in parallel and the server updates the variable after receiving the local information from clients, which results the computation on each machine be efficient.
The main drawback of first-order methods is that they require a large amount of iterations to find an accurate solution.
As a consequence, factors like bandwidth limitation and latency in the network leads to expensive communication cost in total.

The classical second-order methods have superior convergence rates~\cite{nesterov2018lectures,nocedal1999numerical} than the first-order methods. 
However, their extension to a distributed setting is non-trivial.
The key challenge is how to deliver the second-order information efficiently.
One straightforward strategy is to perform classical Newton-type step on the server within the aggregate Hessian, but this requires~$\fO(d^2)$ communication complexity at each iteration, which is unacceptable in distributed settings.

Several previous works address the communication issues in distributed second-order optimization.
They avoid $\OM(d^2)$ communication cost per iteration but have several other shortcomings. 
We briefly review three main approaches as follows.
\begin{itemize}[leftmargin=0.5cm]
\item 
For the first one, each client performs Newton step with its local Hessian and sends the descent direction to the server, then the server aggregates all of local descent directions to update the global variable~\cite{wang2018giant,shamir2014communication,yuan2020convergence,ghosh2021escaping,crane2019dingo,reddi2016aide}.
Their convergence rates depend on the assumption of specific structure of the problems or data similarity.
In general case, their theoretical results are no stronger than first-order methods.
\item 
For the second one, the algorithms formulate a Newton step by a (regularized) quadratic sub-problem and solve it with distributed first-order methods~\cite{reddi2016aide,zhang2015disco,ye2022accelerated,uribe2020distributed}. 
The procedure of solving sub-problem introduces additional computation and communication complexity, which increases the total cost and leads to complicated implementation. 
\item 
For the third one, each client keeps the local second-order information whose change at each iteration can be stored in~$\OM(d)$ space complexity, which leads to the efficient communication~\cite{soori2020dave,islamov2022distributed,safaryan2021fednl}.
However, the storage of local second-order information on each client requires~$\fO(d^2)$ space complexity.
Additionally, the convergence guarantees of these methods require strong assumptions such as Lipschitz continuity of each local Hessian.
\end{itemize}

Furthermore, most existing distributed second-order methods are designed for convex optimization problems~\cite{wang2018giant,shamir2014communication,yuan2020convergence,crane2019dingo,zhang2015disco,ye2022accelerated,uribe2020distributed,soori2020dave,islamov2022distributed,safaryan2021fednl}.
Few works consider the theory for nonconvex case~\cite{reddi2016aide,ghosh2021escaping}.
\citet{reddi2016aide} provided non-asymptotic convergence rate to find approximate first-order stationary point of nonconvex objective function by distributed second-order method, but the convergence result has no advantage over first-order methods.
\citet{ghosh2021escaping} proposed a distributed cubic-regularized Newton method to find the approximate second-order stationary point, while its convergence guarantee requires very strong assumption that all of the local Hessians during iterations are sufficiently close to the global Hessian.

\begin{table*}[!t]
	\centering
	\caption{We summarize the distributed optimization algorithms by their communication complexity per iteration, space complexity on each client, global convergence in the general nonconvex (NC) case, local convergence in the strongly-convex (SC) case and additional assumption for convergence guarantee.}\label{tbl:main-2}
	\begin{threeparttable}
\footnotesize\setlength\tabcolsep{5.pt}
\begin{tabular}{ c  c  c  c  c c }
			\toprule[.1em]
			 \begin{tabular}{c}\bf Method \end{tabular} &  \begin{tabular}{c}\bf Communication  \\\bf  Complexity
        			 \end{tabular} &   \begin{tabular}{c}\bf Space \\\bf Complexity \end{tabular}  &  \begin{tabular}{c}\bf Global Convergence \\ \bf  (NC)  \end{tabular}&\begin{tabular}{c}\bf  Local Superlinear \\ \bf  Convergence \\ \bf  (SC)  \end{tabular}  & \begin{tabular}{c}
			    \bf  No Additional Assumption    \\
			    \bf  for Convergence
			 \end{tabular}\\
			\midrule
         \begin{tabular}{c}
         First-Order Methods \\ {\cite{nesterov2018lectures,mishchenko2022proxskip}}
         \end{tabular} 
    & $\OM(d)$ & $\OM(d)$&  $\OM(\epsilon^{-2})$& \xmark &\cmark  \\  
			\cmidrule{1-1}
	\begin{tabular}{c}	{Distributed-(Cubic)-Newton}\\{ \cite{nesterov2006cubic,nesterov2008accelerating} }\end{tabular}
	&$\OM(d^2)$ &$\OM(d^2)$ & $\OM(\epsilon^{-3/2})$&\cmark & \cmark \\	
	  	\cmidrule{1-1}
 \begin{tabular}{c}
AIDE~\tnote{(a)} \\
 {\cite{reddi2016aide}}
 \end{tabular} &$\OM(d)$&$\OM(d)$ &\xmark&\xmark&\xmark~\tnote{(f), (g)}\\[0.2cm]
 \begin{tabular}{c}
 Disco/Accelerated ADAN~\tnote{(a)} \\
{ \cite{zhang2015disco,ye2022accelerated}} \end{tabular} &$\OM(d)$&$\OM(d)$&\xmark&\xmark&\xmark~\tnote{(b)}\\
		\cmidrule{1-1}
	\begin{tabular}{c}
 (Inexact)-DANE/DANE-HB \\
{\cite{shamir2014communication,reddi2016aide,yuan2020convergence}}
	\end{tabular}
 & $\OM(d)$ & $\OM(d)$ & $\OM(\epsilon^{-2})$\tnote{ (d)}&\xmark &\xmark~\tnote{(b), (f), (g)}  \\[0.2cm]
  \begin{tabular}{c}
GIANT \\
 {\cite{wang2018giant}}
 \end{tabular} &$\OM(d)$&$\OM(d)$&\xmark& \xmark &\xmark~\tnote{(b)}\\[0.2cm]
 \begin{tabular}{c}
Local CRN \\
 {\cite{ghosh2021escaping}}
 \end{tabular} &$\OM(d)$&$\OM(d)$& $\OM(\epsilon^{-3/2})$ \tnote{(e)}&\xmark&\xmark~\tnote{(b)}\\
 	\cmidrule{1-1}
 \begin{tabular}{c}
 Compressed Distributed Newton \\
 {\cite{islamov2021distributed,islamov2022distributed,safaryan2021fednl}}
 \end{tabular} &~~$\OM(d)$&$\OM(d^2)$&\xmark&\cmark~\tnote{(c)} &\xmark\tnote{(h)}\\[0.2cm]
 \begin{tabular}{c}
Distributed-Quasi-Newton \\
{ \cite{soori2020dave}}
 \end{tabular} &$\OM(d)$&$\OM(d^2)$&\xmark&\cmark&\xmark\tnote{(f), (g), (h)}\\
 \midrule 
	 \begin{tabular}{c}{C2EDEN} \\ {Algorithm~\ref{alg:C2EDEN} }\end{tabular} & ${\OM(d)}$ & ${\OM(d)}$ &${\OM(\epsilon^{-3/2})}$&\cmark&\cmark\\			
			\bottomrule		
	\end{tabular} 
	\begin{tablenotes}
		{\scriptsize     
  			\item [{ (a)}] The method(s) have to solve a sub-problem at each iteration, which requires additional communication complexity. \\
		\item[{ {(b)}}] The method(s) require data similarity to guarantee their convergence.\\
			\item [{ (c)}] The local superlinear convergence requires all of the local client Hessian estimator be close to $\nabla^2 f_i(\x^*)$ at each iteration.\\
           \item [ (d)] The global convergence rate is only established for finding approximate first-order stationary point. \\
   			\item [{ (e)}] The global convergence rate requires very strong assumption such that $\|\nabla f_i(\x)-\nabla f(\x)\|\leq \epsilon$ and $\|\nabla^2 f_i(\x)-\nabla^2 f(\x)\|\leq \sqrt{\epsilon}$.\\
      \item [{ (f)}] The convergence guarantee requires each $f_i(\cdot)$ is strongly-convex.\\
       \item [{ (g)}] The convergence guarantee requires each $\nabla f_i(\cdot)$ is Lipschitz continuous.\\
         \item [{ (h)}] The convergence guarantee requires each $\nabla^2 f_i(\cdot)$ is Lipschitz continuous.\\
			}
	\end{tablenotes}  	
	\end{threeparttable}
\end{table*}

In this paper, we provide a novel mechanism to reduce the communication cost for distributed second-order optimization.
It only communicates by gradients and Hessian-vector products. 
Concretely, the server forms a cubic-regularized Newton step~\cite{nesterov2006cubic,doikov2022second} based on the current aggregated gradient and the delayed Hessian which is formed by Hessian-vector products received in recent iterations.
As a result, we propose a Communication and Computation Efficient DistributEd Newton (C2EDEN) method, which has the following attractive properties.
\begin{itemize}[topsep=0pt]\setlength\itemsep{-0.1em}
    \item It only requires $\OM(d)$ communication complexity at each iteration, matching the cost of first-order methods.
    \item It has simple local update rule, which does not require any subroutine with additional communication cost. Furthermore, the client avoids storing the local Hessian with $\fO(d^2)$ storage complexity.
    \item It can find an $\big(\epsilon, \sqrt{dL\epsilon}\,\big)$-second-order 
    stationary point within $\fO\big(\sqrt{dL}\,\epsilon^{-3/2}\big)$ iterations for nonconvex problem, and exhibit local superlinear convergence for strongly-convex problem.
    \item Its convergence guarantees do not require additional assumptions such as Lipschitz continuouity of the local Hessian or strong convexity of the local function.
\end{itemize}
We summarize the main theoretical results of C2EDEN and related work in Table \ref{tbl:main-2}.
We also provide numerical experiments on both convex and nonconvex problems to show the empirical superiority for the proposed method. 

\paragraph{Paper Organization}

The remainder of the paper is organized as follows. In Section~\ref{sec:pre} presents preliminaries. 
Section~\ref{sec:C2EDEN} provide the details of C2EDEN method and provides the convergence analysis. 
Section~\ref{sec:exp} validates our algorithm
empirically to show its superiority. 
Finally, we conclude our work in Section~\ref{sec:conclu}.

\section{Preliminaries}
\label{sec:pre}

We use $\Norm{\,\cdot\,}$ to present spectral norm and Euclidean norm of matrix and vector respectively. 
We denote the standard basis for~$\BR^d$ by~$\{\ve_1,\dots,\ve_d\}$ and let~$\I$~be the identity matrix. 
Given matrix~$\mH\in\BR^{d\times d}$, we write its $j$-th column as~$\mH(j)$, which is equal to~$\mH\ve_j$. 
We also denote the smallest eigenvalue of symmetric matrix by~$\lambda_{\min}(\cdot)$. 


Throughout this paper, we make the following assumption.
\begin{assumption}\label{ass:heslip}
We assume each local function $f_i(\,\cdot\,)$ is twice differentiable and the objective $f(\,\cdot\,)$ has a Lipschitz continuous Hessian, i.e., there exists constant $L>0$ such that 
\begin{align}
    \|\nabla^2 f(\x) -\nabla^2 f(\y)\| \leq L\|\x-\y\|
\end{align}
for any $\vx,\vy\in\BR^d$.
\end{assumption}

Compared with assuming each local Hessian $\nabla^2 f_i(\cdot)$ is $L$-Lipschitz continuous~\cite{shamir2014communication,reddi2016aide,zhang2015disco,yuan2020convergence,islamov2022distributed,islamov2021distributed,safaryan2021fednl, soori2020dave}, the Lipschitz continuity on the global Hessian $\nabla f^2(\,\cdot\,)$ in Assumption \ref{ass:heslip} is much weaker.

For the nonconvex case, we assume the global objective is lower bounded.
\begin{assumption}
\label{ass:boundedbelow}
We assume the global objective $f(\,\cdot\,)$ is lower bounded, i.e., we have $f^* \triangleq \inf_{\x\in\RB^{d}} f(\x) > -\infty.$
\end{assumption}

We also consider the specific case in which the global objective is strongly-convex.
\begin{assumption}
\label{ass:mustrongly}
    We assume the global objective $f(\,\cdot\,)$ is strongly-convex, i.e., there exists a constant $\mu>0$ such that
    $\nabla^2 f(\x)\succeq \mu\I$ for any $\vx\in\BR^d$.
\end{assumption}
Note that all existing superlinear convergent distributed second-order methods require the strongly-convex assumption on each local function $f_i(\,\cdot\,)$~\cite{soori2020dave,islamov2021distributed,safaryan2021fednl,islamov2022distributed}, while our  Assumption \ref{ass:mustrongly} only requires the strong convexity of the global objective.

This paper studies second-order optimization on both convex and nonconvex problems.
For the convex case, we target to find approximate first-order stationary point.
\begin{definition}
We call $\vx$ is an $\epsilon$-first-order stationary point of~$f(\cdot)$ if it satisfies $\|\nabla f(\vx)\|\leq\epsilon$.
\end{definition}

For the nonconvex case, finding global solution is intractable and the first-order stationary point may lead to the undesired saddle point. 
Hence, we target to find the approximate second-order stationary point to characterize the local optimality. 
\begin{definition}
We call $\vx$ is an $(\epsilon, \delta)$-second-order stationary point of $f(\cdot)$ if it satisfies
$\|\nabla f(\vx)\|\leq\epsilon$ and $\nabla^2 f(\vx)\succeq-\delta\mI$.
\end{definition}

\section{Algorithm and Main Results}
\label{sec:C2EDEN}


We first introduce our Communication and Computation Efficient DistributEd Newton (C2EDEN) method and the main ideas, then we provide the convergence guarantees.

\subsection{The C2EDEN Method}
We present the details of C2EDEN in Algorithm~\ref{alg:C2EDEN}, where the notation $k\%d$ presents the remainder of $k$ divided by $d$, $M>0$ is the cubic-regularized parameter and we use 
\begin{align}
\label{eq:cubic}
\begin{split}
&{\rm T}_M(\g,\mA;\x)  \\
&\quad~~= \argmin_{\y\in\RB^d} \left\{\inner{\g}{\y-\x}+\frac{1}{2}\inner{\mA(\y-\x)}{\y-\x}+\frac{M}{6}\|\y-\x\|^3\right\}
\end{split}
\end{align}
to present the solution of the cubic-regularized sub-problem for a given vector $\g\in\BR^d$ and a symmetric matrix $\mA\in\BR^{d\times d}$.

We partition the Hessian at at snapshot point~$\tilde\vx$ into $d$ columns
\begin{align*}
\nabla^2 f_i(\tilde{\x})=
\begin{bmatrix}
\Big| & \Big| & & \Big| \\
\nabla^2f_i(\tilde{\x})\e_1 &  \nabla^2f_i(\tilde{\x})\e_2 & \cdots&\nabla^2 f_i(\tilde{\x})\e_d \\
\Big| & \Big| & & \Big|
\end{bmatrix}.
\end{align*}
C2EDEN atomize the communication of local $\nabla^2 f_i(\tilde{\x})$ into~$d$ consecutive iterations by sending ~$\v_{i,k}=\nabla^2 f_i(\tilde{\x})\e_{k\%d+1}$ for $i=1,\dots,n$ at the $k$-th iteration.
Note that the cubic-regularized Newton steps in our method only use the Hessian at snapshot point~$\tilde\vx$, which is updated per $d$ iterations.
Also note that any iteration of C2EDEN avoids the construction of a full local Hessian on the client, which significantly reduces computational cost of accessing the second-order information.
Additionally, the iteration of C2EDEN only communicates the local gradient $\vg_{i,k}$ and Hessian-vector product $\vv_{i,k}$ together, which means each client only requires~$\fO(d)$ communication complexity and $\fO(d)$ space complexity to deal with its local second-order information.

The computational cost of C2EDEN is dominated by computing  Hessian-vector products on clients and solving the cubic-regularized sub-problem on server.
We emphasis that these two steps can be executed in parallel.
Since the computation of $\vx_{k+1}$ (line 22) on the server only depends on previous Hessian at $\tilde\vx$, 
all of the clients are allowed to compute $\vv_{i,k}$ (line 16) at the same time.
Additionally, the reuse of Hessian at snapshot~$\tilde\vx$ leads to a computational complexity for achieving $\vx_{k+1}$ in~$\fO(d^2)$ on average~\cite{doikov2022second}.

We present the pipeline for one epoch of C2EDEN in Figure \ref{figure:pipeline} to illustrate our mechanism.

\begin{algorithm}[t] 
\caption{C2EDEN$\,(\vx_0,M, K)$}\label{alg:C2EDEN}
\begin{algorithmic}[1]\vskip0.05cm
\STATE \textbf{for $k=0,1\cdots d-1$ do} \\[0.14cm]
\STATE \quad \textbf{for} $i=1,\dots,n$ \textbf{do in parallel}  \\[0.14cm]
\STATE \quad\quad  \client \\[0.14cm]
\STATE \quad \quad \quad compute $\v_{i,k}=\nabla^2 f_i(\x_0)\e_{k+1}$ \\[0.14cm] 
\STATE \quad \textbf{end if}\\[0.14cm]
\STATE \quad \server \\[0.14cm]
\STATE \quad \quad aggregate ${\H}^{+}(k+1)=\frac{1}{n}\sum_{i=1}^{n}\v_{i,k}$ \\[0.14cm]
\STATE \textbf{end for}\\[0.14cm]
\STATE $\x_d=\x_0$\\[0.14cm]
\STATE \textbf{for} $k=d,d+1,\dots,K$ \textbf{do} \\[0.14cm]
\STATE \quad \textbf{if} $k\%d = 0$ \textbf{then}\\[0.14cm]
\STATE \quad \quad \server \\[0.14cm] 
\STATE \quad \quad \quad update $\tilde{\x}=\x_{k}$ and $\H={\H}^{+}$ \\[0.14cm]
\STATE \quad \textbf{end if}\\[0.14cm]
\STATE \quad \textbf{for} $i=1,\dots,n$ \textbf{do in parallel}  \\[0.14cm]
\STATE \quad\quad  \client \\[0.14cm]
\STATE \quad\quad\quad compute $\v_{i,k}=\nabla^2 f_i(\tilde{\x})\e_{k\%d+1}$ \\[0.14cm]  
\STATE \quad\quad\quad  compute $\g_{i,k}=\nabla f_i(\x_k)$\\[0.14cm]
\STATE \quad \quad \quad send $\g_{i,k}$ and $\v_{i,k}$ \\[0.14cm]
\STATE \quad \textbf{end for}\\[0.14cm]
\STATE \quad \server \\[0.14cm]
\STATE \quad \quad aggregate $\g_k=\frac{1}{n}\sum_{i=1}^{n} \g_{i,k}$\\[0.14cm]
\STATE \quad \quad compute and broadcast $\x_{k+1}={\rm T}_M(\g_k,\H;\x_k)$ \\[0.14cm]
\STATE \quad \quad aggregate ${\H}^{+}(k\%d+1)=\frac{1}{n}\sum_{i=1}^{n}\v_{i,k}$ \\[0.14cm]
\STATE \textbf{end for}\\[0.05cm]
\end{algorithmic}
\end{algorithm}

\begin{figure*}[ht]
	\centering
	\renewcommand{\arraystretch}{2}
 
{\small\begin{tabular}{l|c|c|c|c|c|}
\hhline{------}
		\multicolumn{1}{|c|}{{\bf Iteration}} &
\multicolumn{2}{c|}{$k=td$} & 
        $\cdots\cdots$ & 
        \multicolumn{2}{c|}{$k=td+(d-1)$}\\
\hhline{------}
		\multicolumn{1}{|c|}{{\bf $i$-th Client}} &
		$\vg_{i,td}=\nabla f_i(\x_{td})$ &
		$\vv_{i,td}=\nabla^2 f_i(\tilde{\x})\e_1$ & 
        $\cdots\cdots$ & 
        $\vg_{i,td+(d-1)}=\nabla f_i\big(\x_{td+(d-1)}\big)$ & 
        $\vv_{i,td+(d-1)}=\nabla^2f_i(\tilde{\x})\e_{d}$\\
  \hhline{------}
		\multicolumn{1}{|c|}{{\bf Server}} &
		 &
		$\x_{td+1}={\rm T}_M\big(\g_{td},\H;\x_{td}\big)$ & 
        $\cdots\cdots$ & 
        & 
        $\x_{(t+1)d}={\rm T}_M\big(\g_{td+(d-1)},\H;\x_{td+(d-1)}\big)$\\
  \hhline{------}
\end{tabular}}
\caption{We present the pipeline of C2EDEN from its $td$-th iteration to $(td+(d-1))$-th iteration, where  $\tilde{\x}=\x_{td}$ and~$\mH=\nabla^2 f\big(\vx_{(t-1)d}\big)$. 
The algorithm atomizes the cost of constructing $\nabla^2 f (\tilde{\x})=\nabla^2 f (\x_{td})$ by computing Hessian-vector products~$\nabla^2 f_i(\tilde{\x})\e_1,\dots,\nabla^2 f_i(\tilde{\x})\e_d$ on clients during these $d$ iterations.
The server has already obtained matrix $\mH$ at the $(td-1)$-th iteration and the Hessian~$\nabla^2 f (\tilde{\x})$ is constructed for the next epoch, that is from the $(t+1)d$-th iteration to the $((t+1)d+(d-1))$-th iteration. }\label{figure:pipeline}
\end{figure*}

\subsection{Convergence Analysis}
\label{sec:analysis}
This section provides theoretical guarantees of C2EDEN for both nonconvex case and strongly-convex case. 

At the $k$-th iteration of Algorithm~\ref{alg:C2EDEN} with $k=td+q$, it performs the cubic-regularized Newton step on the server by using the matrix $\H$ that is exactly the Hessian at the previous snapshot point~$\vx_{t(d-1)}$ and vector gradient $\vg_k$ that is the gradient at the current point~$\vx_k$. 
Hence, the update rule of C2EDEN on the server can be written as
\begin{align}
\label{eq:server-update}
\x_{k+1}= {\rm T}_M\left(\nabla f(\x_k),\nabla^2 f\big(\x_{\tau(k;d)}\big);\x_k\right),
\end{align}
where $\tau(k;d)=d(\Floor{k/d}-1)$.

\subsubsection{The Analysis in Nonconvex Case}

For the general non-convex case, we introduce the  auxiliary quantity~\cite{nesterov2006cubic,doikov2022second}
\begin{align}
\label{eq:gamma}
\begin{split}
    \!\!\gamma(\x) \triangleq \max\left\{-\frac{1}{648M^2}\lambda_{\min}\big(\nabla^2 f(\x)\big)^3, \frac{1}{72\sqrt{2M}}\|\nabla f(\x)\|^{3/2}\right\}.
\end{split}
\end{align}
for our analysis.

We provide the following lemma to show the decrease of function value by a step of solving cubic-regularized sub-problem.
\begin{lemma}[{\cite[Theorem 1]{doikov2022second}}]
    Suppose Assumption~\ref{ass:heslip} holds and we have $M\geq L$, then the update ${\rm T}={\rm T}_M\big(\nabla f(\x),\nabla^2 f(\z);\x\big)$ holds that 
    \begin{align}
    \label{eq:lazy-update}
        f(\x)-f({\rm T})\geq \gamma({\rm T}) +\frac{M}{48}\|{\rm T}-\x\|^3 -\frac{11L^3}{M^2}\|\z-\x\|^3,
    \end{align}
    for any $\x$, $\z\in\RB^d$.
\end{lemma}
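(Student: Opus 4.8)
The statement is cited as {\cite[Theorem 1]{doikov2022second}}, so strictly speaking I would just invoke that reference; but let me sketch how the bound in \eqref{eq:lazy-update} can be recovered from the standard cubic-regularized Newton analysis combined with a Hessian-mismatch correction. The plan is to compare the ``lazy'' step ${\rm T}={\rm T}_M(\nabla f(\x),\nabla^2 f(\z);\x)$ against the exact step $\hat{\rm T}={\rm T}_M(\nabla f(\x),\nabla^2 f(\x);\x)$, using the classical decrease guarantee for the latter (Nesterov--Polyak) and then controlling the perturbation induced by replacing $\nabla^2 f(\x)$ with $\nabla^2 f(\z)$.

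First I would write the first-order optimality conditions for the cubic subproblem \eqref{eq:cubic} at ${\rm T}$: with $r=\|{\rm T}-\x\|$, we have $\nabla f(\x)+\nabla^2 f(\z)({\rm T}-\x)+\frac{M r}{2}({\rm T}-\x)=\vzero$, and the matrix $\nabla^2 f(\z)+\frac{M r}{2}\I\succeq \vzero$. From these I would extract two facts in the usual way: (i) a lower bound on $\|\nabla f({\rm T})\|$ in terms of $r$ and the Hessian gap, obtained by writing $\nabla f({\rm T})=\nabla f({\rm T})-\nabla f(\x)-\nabla^2 f(\z)({\rm T}-\x)-\frac{Mr}{2}({\rm T}-\x)$ and using Assumption~\ref{ass:heslip} to bound $\|\nabla f({\rm T})-\nabla f(\x)-\nabla^2 f(\x)({\rm T}-\x)\|\le \frac{L}{2}r^2$ plus $\|(\nabla^2 f(\z)-\nabla^2 f(\x))({\rm T}-\x)\|\le L\|\z-\x\|\,r$; and (ii) a lower bound on $\lambda_{\min}(\nabla^2 f({\rm T}))$, via $\nabla^2 f({\rm T})\succeq \nabla^2 f(\z)-L(r+\|\z-\x\|)\I\succeq -\frac{Mr}{2}\I-L(r+\|\z-\x\|)\I$. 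These translate, after cubing and using $M\ge L$, into lower bounds of the form $\|\nabla f({\rm T})\|\ge c_1 M r^2 - c_2 L\|\z-\x\|\,r$ and $-\lambda_{\min}(\nabla^2 f({\rm T}))\le c_3 M r + c_4 L\|\z-\x\|$, which control $\gamma({\rm T})$ from \eqref{eq:gamma} in terms of $r$ and $\|\z-\x\|$.

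Second, for the function-value decrease I would use the subproblem value: since ${\rm T}$ minimizes the cubic model built at $\x$ with matrix $\nabla^2 f(\z)$, and $\x$ itself is a feasible point with model value $0$, we get $\inner{\nabla f(\x)}{{\rm T}-\x}+\frac12\inner{\nabla^2 f(\z)({\rm T}-\x)}{{\rm T}-\x}+\frac{M}{6}r^3\le 0$. Combining this with the Taylor expansion $f({\rm T})\le f(\x)+\inner{\nabla f(\x)}{{\rm T}-\x}+\frac12\inner{\nabla^2 f(\x)({\rm T}-\x)}{{\rm T}-\x}+\frac{L}{6}r^3$ (Assumption~\ref{ass:heslip}), and adding/subtracting the $\nabla^2 f(\z)$ quadratic term, yields $f(\x)-f({\rm T})\ge \frac{M-L}{6}r^3 - \frac12\inner{(\nabla^2 f(\z)-\nabla^2 f(\x))({\rm T}-\x)}{{\rm T}-\x}\ge \frac{M-L}{6}r^3 - \frac{L}{2}\|\z-\x\|\,r^2$. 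The residual terms $\frac{L}{2}\|\z-\x\|\,r^2$ (and those appearing when bounding $\gamma({\rm T})$) are then handled by Young's inequality, splitting each cross term $L\|\z-\x\|\,r^2$ or $L\|\z-\x\|\,r$ into a small multiple of $M r^3$ (absorbed into the $\frac{M}{48}\|{\rm T}-\x\|^3$ slack and the $\gamma({\rm T})$ term) plus a multiple of $L^3 M^{-2}\|\z-\x\|^3$. Tracking constants carefully should produce the $\frac{11L^3}{M^2}\|\z-\x\|^3$ penalty.

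\textbf{Main obstacle.} The conceptual steps are routine, but the difficulty is purely bookkeeping: getting the right split in Young's inequality so that (a) the $\gamma({\rm T})$ term survives with its specific constants $\tfrac{1}{648M^2}$ and $\tfrac{1}{72\sqrt{2M}}$, (b) the slack $\tfrac{M}{48}\|{\rm T}-\x\|^3$ is left over, and (c) all the leftover ``Hessian mismatch'' debris collapses into a single $\tfrac{11L^3}{M^2}\|\z-\x\|^3$ term rather than something larger. Matching those exact constants is the only delicate part, and since the result is quoted verbatim from \citet{doikov2022second}, I would in practice cite their Theorem~1 and relegate the constant-chasing to their proof.
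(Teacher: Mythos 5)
The paper gives no proof of this lemma at all: it is imported verbatim as Theorem~1 of \citet{doikov2022second}, so your decision to cite that reference and treat the constant-chasing as external is exactly what the paper does, and your sketch follows the same lazy-Hessian cubic-Newton analysis that the cited proof uses (subproblem optimality conditions, a Taylor comparison for the function decrease, and Young's inequality to fold the Hessian-mismatch cross terms $L\|\z-\x\|r^2$ and $L\|\z-\x\|r$ into $Mr^3$ plus $L^3M^{-2}\|\z-\x\|^3$). One correction to the sketch: to dominate $\gamma({\rm T})$ by the function decrease you need \emph{upper} bounds on $\|\nabla f({\rm T})\|$ and on $-\lambda_{\min}\big(\nabla^2 f({\rm T})\big)$, since $\gamma$ is built from $\|\nabla f({\rm T})\|^{3/2}$ and $\big(-\lambda_{\min}\big)^3$; the inequality you display in item (i), $\|\nabla f({\rm T})\|\ge c_1 M r^2-c_2 L\|\z-\x\|r$, points the wrong way and would be useless here. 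The identity you wrote down, $\nabla f({\rm T})=\big[\nabla f({\rm T})-\nabla f(\x)-\nabla^2 f(\x)({\rm T}-\x)\big]+\big[(\nabla^2 f(\x)-\nabla^2 f(\z))({\rm T}-\x)\big]-\frac{Mr}{2}({\rm T}-\x)$, already yields the correct bound $\|\nabla f({\rm T})\|\le \frac{M+L}{2}r^2+L\|\z-\x\|r$ by the triangle inequality, so only the stated direction needs fixing, not the mechanics; item (ii) is stated in the right direction.
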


Then we present some technical lemmas for later analysis.
\begin{lemma}
For any sequence of positive numbers $\{r_k\}_{k\geq 0}$, it holds for any $m\geq 1$ that
\begin{align}
\label{eq:jenson}
\left(\sum_{i=0}^{m-1} r_i\right)^3\leq m^2\sum_{i=0}^{m-1}r_i^3.
\end{align}
\end{lemma}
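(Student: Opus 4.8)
The statement to prove is the elementary inequality $\left(\sum_{i=0}^{m-1} r_i\right)^3 \le m^2 \sum_{i=0}^{m-1} r_i^3$ for positive reals $r_i$ and integer $m \ge 1$. This is a direct consequence of convexity (power mean / Jensen), so my plan is short.

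\bigskip

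The plan is to apply Jensen's inequality to the convex function $\phi(t) = t^3$ on $[0,\infty)$ (convexity on the nonnegative reals suffices since all $r_i > 0$). First I would write the arithmetic mean $\bar r = \frac{1}{m}\sum_{i=0}^{m-1} r_i$ and invoke convexity of $\phi$ in the form $\phi\!\left(\frac{1}{m}\sum_{i=0}^{m-1} r_i\right) \le \frac{1}{m}\sum_{i=0}^{m-1} \phi(r_i)$, i.e. $\bar r^{\,3} \le \frac{1}{m}\sum_{i=0}^{m-1} r_i^3$. Then I would multiply both sides by $m^3$ to obtain $\left(\sum_{i=0}^{m-1} r_i\right)^3 = m^3 \bar r^{\,3} \le m^2 \sum_{i=0}^{m-1} r_i^3$, which is exactly \eqref{eq:jenson}.

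\bigskip

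An equivalent self-contained route, avoiding any appeal to a named inequality, is to apply the Cauchy--Schwarz (or power-mean) inequality twice: first $\left(\sum_{i=0}^{m-1} r_i\right)^2 \le m \sum_{i=0}^{m-1} r_i^2$, and then bound $\sum_{i=0}^{m-1} r_i^2 = \sum_{i=0}^{m-1} r_i^{1/2} \cdot r_i^{3/2} \le \left(\sum_{i=0}^{m-1} r_i\right)^{1/2}\left(\sum_{i=0}^{m-1} r_i^3\right)^{1/2}$ by Cauchy--Schwarz; combining the two and rearranging gives the claim after squaring appropriately. Either derivation is a couple of lines, so I would present whichever is cleaner in context — likely the direct convexity argument.

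\bigskip

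There is essentially no obstacle here: the only things to be slightly careful about are that $\phi(t)=t^3$ is convex only on $[0,\infty)$ (fine, since the $r_i$ are assumed positive) and that the inequality is trivially an equality when $m=1$, so the stated range $m \ge 1$ is covered without special casing. I would not expect this lemma to need more than three or four lines of proof.
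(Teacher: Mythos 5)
Your proposal is correct and follows essentially the same route as the paper, which also invokes Jensen's inequality for the convex function $t\mapsto t^3$ applied to the arithmetic mean of the $r_i$ and then rescales by $m^3$. No meaningful difference in approach; your write-up is, if anything, stated more carefully than the paper's one-line version.
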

\begin{proof}
We directly prove this result by using Jensen's inequality that $  \sum_{i=0}^{m-1}\left(\frac{1}{m}r_i^3\right)^3\leq \frac{1}{m}\sum_{i=0}^{m-1}r_i^3.$
\end{proof}
\begin{lemma}
\label{lm:first-r}
For any sequence of positive numbers $\{r_k\}_{k\geq 0}$, it holds for any $m\geq 1$ that
\begin{align}
\label{eq:induct1}
    \sum_{k=m}^{2m-1}\left(\sum_{i=0}^{k}r_i\right)^3 \leq 3(m+1)^3\sum_{i=0}^{2m-1}r_i^3.
\end{align}
\end{lemma}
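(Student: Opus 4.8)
\textbf{Proof proposal for Lemma~\ref{lm:first-r}.}

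The plan is to bound each term $\left(\sum_{i=0}^{k}r_i\right)^3$ in the sum over $k\in\{m,\dots,2m-1\}$ by splitting the inner sum into the ``early'' block $\sum_{i=0}^{m-1}r_i$ and the ``tail'' block $\sum_{i=m}^{k}r_i$, then apply the previous lemma (inequality~\eqref{eq:jenson}) to each block separately. Concretely, I would first use the elementary inequality $(a+b)^3\leq 4(a^3+b^3)$ (which follows from convexity of $t\mapsto t^3$ on $[0,\infty)$, since $(a+b)^3 = 8\big(\tfrac{a+b}{2}\big)^3 \leq 8\cdot\tfrac{a^3+b^3}{2} = 4(a^3+b^3)$) with $a=\sum_{i=0}^{m-1}r_i$ and $b=\sum_{i=m}^{k}r_i$, so that
\begin{align*}
\left(\sum_{i=0}^{k}r_i\right)^3 \leq 4\left(\sum_{i=0}^{m-1}r_i\right)^3 + 4\left(\sum_{i=m}^{k}r_i\right)^3.
\end{align*}

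Next I would sum this over $k=m,\dots,2m-1$, which is $m$ terms. The first piece contributes $4m\left(\sum_{i=0}^{m-1}r_i\right)^3 \leq 4m\cdot m^2\sum_{i=0}^{m-1}r_i^3 = 4m^3\sum_{i=0}^{m-1}r_i^3$ by~\eqref{eq:jenson}. For the second piece, since $k\leq 2m-1$, the tail block $\sum_{i=m}^{k}r_i$ has at most $m$ summands, so $\left(\sum_{i=m}^{k}r_i\right)^3 \leq m^2\sum_{i=m}^{k}r_i^3 \leq m^2\sum_{i=m}^{2m-1}r_i^3$ again by~\eqref{eq:jenson}; summing over the $m$ values of $k$ gives $4m\cdot m^2\sum_{i=m}^{2m-1}r_i^3 = 4m^3\sum_{i=m}^{2m-1}r_i^3$. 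Adding the two pieces yields $\sum_{k=m}^{2m-1}\left(\sum_{i=0}^{k}r_i\right)^3 \leq 4m^3\sum_{i=0}^{2m-1}r_i^3$, which is stronger than the claimed bound $3(m+1)^3\sum_{i=0}^{2m-1}r_i^3$ only for some $m$; I would need to recheck constants, as $4m^3$ versus $3(m+1)^3$ means the claimed inequality holds since $3(m+1)^3 \geq 3m^3 \cdot (1+1/m)^3 \geq 4m^3$ requires $(1+1/m)^3\geq 4/3$, true for all $m\geq 1$ (at $m=1$: $8\geq 4/3$; the ratio only grows). So the chain closes.

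I do not expect a serious obstacle here; the only mild subtlety is getting the constant to land below $3(m+1)^3$ rather than merely below some $O(m^3)$ quantity, which is why I would carry the split with the sharp constant $4$ in $(a+b)^3\leq 4(a^3+b^3)$ and verify $4m^3\leq 3(m+1)^3$ for all $m\geq 1$ at the end. An alternative, if one wanted to match the statement's constant more directly, would be to bound $\left(\sum_{i=0}^k r_i\right)^3 \leq (2m)^2\sum_{i=0}^{2m-1}r_i^3$ for each $k\leq 2m-1$ directly via~\eqref{eq:jenson} and sum over $m$ terms to get $m\cdot 4m^2 = 4m^3$; same result, slightly less bookkeeping. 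Either way the final comparison $4m^3 \leq 3(m+1)^3$ is the step to state explicitly.
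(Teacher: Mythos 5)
Your decomposition and your applications of inequality~\eqref{eq:jenson} are correct up to the intermediate bound $\sum_{k=m}^{2m-1}\left(\sum_{i=0}^{k}r_i\right)^3 \leq 4m^3\sum_{i=0}^{2m-1}r_i^3$, but the final comparison $4m^3\leq 3(m+1)^3$ is false for large $m$, and the monotonicity claim you use to justify it is backwards: the ratio $3(m+1)^3/(4m^3)=\tfrac{3}{4}(1+1/m)^3$ \emph{decreases} to $3/4<1$ as $m\to\infty$ rather than growing. Concretely, $(1+1/m)^3\geq 4/3$ holds only for $m\leq 9$; already at $m=10$ one has $4m^3=4000>3993=3(m+1)^3$. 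Since the lemma is invoked in the proof of Theorem~\ref{theorem:global} with $m=d$, the problem dimension, this is precisely the regime where your bound fails to imply the stated one. (The paper sidesteps the issue entirely by proving the lemma by induction on $m$, peeling off the last two terms of the sum at each step.)

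The gap is repairable within your own framework. Instead of crudely bounding each factor $(k+1)^2$ by $(2m)^2$ before summing, apply~\eqref{eq:jenson} termwise as $\left(\sum_{i=0}^{k}r_i\right)^3\leq (k+1)^2\sum_{i=0}^{k}r_i^3\leq (k+1)^2\sum_{i=0}^{2m-1}r_i^3$ and keep the exact sum $\sum_{k=m}^{2m-1}(k+1)^2=\sum_{j=m+1}^{2m}j^2=\tfrac{m(2m+1)(7m+1)}{6}$, which is at most $3(m+1)^3$ for all $m\geq 1$ (the polynomial inequality $14m^3+9m^2+m\leq 18m^3+54m^2+54m+18$ is immediate). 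This yields the claimed constant directly, with no block split and no induction, and would in fact be a cleaner argument than the paper's. Alternatively, the weaker conclusion $4m^3\leq 4(m+1)^3$ would still suffice for the downstream use in Theorem~\ref{theorem:global} after adjusting absolute constants there, but it does not prove the lemma as stated.
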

\begin{proof}
We prove this statement by induction.

For $m=1$, it follows that $(r_0+r_1)^3 \overset{\eqref{eq:jenson}}{\leq} 4(r_1^3+r_2^3).$
Suppose the inequality \eqref{eq:induct1} holds for $m=1,2\dots,n-1$, then we have
    \begin{align*}
   &     \sum_{k=n}^{2n-1}\left(\sum_{i=0}^{k}r_i\right)^3 =  \sum_{k=n}^{2n-3}\left(\sum_{i=0}^{k}r_i\right)^3 + \left(\sum_{i=0}^{2n-1}r_i\right)^3+\left(\sum_{i=0}^{2n-2}r_i\right)^3\\
        &\leq  \sum_{k={n-1}}^{2n-3}\left(\sum_{i=0}^{k}r_i\right)^3 + \left(\sum_{i=0}^{2n-1}r_i\right)^3+\left(\sum_{i=0}^{2n-2}r_i\right)^3 \\
       & \overset{\eqref{eq:jenson}}{\leq} \sum_{k={n-1}}^{2n-3}\left(\sum_{i=0}^{k}r_i\right)^3 + (2n)^2\sum_{i=0}^{2n-1}r_i^3 + (2n-1)^2\sum_{i=0}^{2n-2}r_i^3 \\
       &\overset{\eqref{eq:induct1}}{\leq}3n^3\sum_{i=0}^{2n-3}r_i^3+8n^2\sum_{i=0}^{2n-1}r_i^3 \leq 3(n+1)^3\sum_{i=0}^{2n-1}r_i^3,
    \end{align*}
which finishes the induction.
\end{proof}
\begin{lemma}
\label{lm:last-r}
For any sequence of positive numbers $\{r_k\}_{k\geq 0}$, it holds that
\begin{align}
\label{eq:last-r}
    \sum_{k=m}^{m+p}\left(\sum_{i=0}^{k-1}r_i\right)^3 \leq 4(m+1)^3 \sum_{i=0}^{m+p}r_i^3
\end{align}

for any $m$ and $p$ such that $1 \leq p \leq m$.
\begin{proof}
We have
\begin{align*}
    & \sum_{k=m}^{m+p}\left(\sum_{i=0}^{k-1}r_i\right)^3\leq \sum_{k=m}^{m+p}\left(\sum_{i=0}^{m+p-1}r_i\right)^3\\
     &\overset{\eqref{eq:jenson}}{\leq} (m+p)^2(p+1)\sum_{i=0}^{m+p-1}r_i^3\leq 4(m+1)^3\sum_{i=0}^{m+p}r_i^3.
\end{align*}

\end{proof}
\end{lemma}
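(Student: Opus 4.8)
The plan is to prove Lemma~\ref{lm:last-r} by a direct termwise estimate, rather than the induction used for Lemma~\ref{lm:first-r}, exploiting the constraint $p\leq m$ to keep all indices comparable. First I would observe that every inner partial sum appearing on the left-hand side is dominated by the longest one: since the $r_i$ are positive, $\sum_{i=0}^{k-1}r_i\leq\sum_{i=0}^{m+p-1}r_i$ for each $k$ with $m\leq k\leq m+p$. As there are exactly $p+1$ indices $k$ in the outer sum, this gives
\[
\sum_{k=m}^{m+p}\left(\sum_{i=0}^{k-1}r_i\right)^3\leq(p+1)\left(\sum_{i=0}^{m+p-1}r_i\right)^3.
\]

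Next I would apply the Jensen-type inequality~\eqref{eq:jenson} with $m+p$ summands, namely $\left(\sum_{i=0}^{m+p-1}r_i\right)^3\leq(m+p)^2\sum_{i=0}^{m+p-1}r_i^3$, and then enlarge the range of the final sum to $0,\dots,m+p$, which is permissible because the additional term $r_{m+p}^3$ is nonnegative. Combining with the previous display bounds the left-hand side by $(m+p)^2(p+1)\sum_{i=0}^{m+p}r_i^3$.

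Finally I would clean up the constant using $1\leq p\leq m$, which gives $m+p\leq 2m$ and $p+1\leq m+1$, hence
\[
(m+p)^2(p+1)\leq 4m^2(m+1)\leq 4(m+1)^3,
\]
exactly the claimed coefficient. I do not anticipate any real obstacle: the argument is a short calculation. The only points that need a little care are bookkeeping the inner index (which runs to $k-1$ here, not $k$ as in Lemma~\ref{lm:first-r}) and recognising that the crude termwise bound, wasteful though it is in general, is tight enough under the hypothesis $p\leq m$ to land on the stated constant.
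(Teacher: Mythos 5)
Your proposal is correct and follows essentially the same route as the paper's own proof: dominate every inner partial sum by $\sum_{i=0}^{m+p-1}r_i$, apply the Jensen-type bound~\eqref{eq:jenson}, and absorb the resulting factor $(m+p)^2(p+1)$ into $4(m+1)^3$ via $p\leq m$. The only difference is that you spell out the constant bookkeeping $(m+p)^2(p+1)\leq 4m^2(m+1)\leq 4(m+1)^3$ explicitly, which the paper leaves implicit.
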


Now, we formally present the global convergence of C2EDEN in the following theorem, which shows that our algorithm can find an $\big(\epsilon, \sqrt{dL\epsilon}\,\big)$-second-order stationary point by~$\OM\big(\sqrt{dL}\,\epsilon^{-3/2}\big)$ iterations.
\begin{theorem}
\label{theorem:global}
    Suppose Assumption~\ref{ass:heslip} and \ref{ass:boundedbelow} hold.  Running C2EDEN (Algorithm~\ref{alg:C2EDEN}) with $M=12dL$ holds that
    \begin{align}
    \label{eq:ncobj}
        \min_{d<i\leq K}\gamma(\x_i) \leq \frac{f(\x_0)-f^*}{K-d},
    \end{align}
    which means by setting the number iterations as $K=\OM\big({\sqrt{dL}}{\epsilon^{-3/2}}\big)$ there exists some $\x_i$ with $d<i\leq K$ such that
    \begin{align*}
        \|\nabla f(\x_i)\|\leq \epsilon \quad \text{and} \quad  \lambda_{\min}\big(\nabla^2 f(\x_i)\big)\geq -\sqrt{dL\epsilon}.
    \end{align*}
\end{theorem}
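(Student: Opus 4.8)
The plan is to telescope the per-step decrease guaranteed by the cubic-regularized Newton step, after controlling the error term coming from using the \emph{delayed} Hessian $\nabla^2 f(\x_{\tau(k;d)})$ rather than $\nabla^2 f(\x_k)$. Recall from \eqref{eq:server-update} that at iteration $k=td+q$ the step uses $\z = \x_{\tau(k;d)} = \x_{(t-1)d}$, so $\|\z-\x_k\| = \|\x_{(t-1)d}-\x_k\|$. Writing $r_i \triangleq \|\x_{i+1}-\x_i\|$, the triangle inequality gives $\|\z-\x_k\| \le \sum_{i=\tau(k;d)}^{k-1} r_i$, which is a sum of at most $2d$ consecutive step-lengths (the $d$ steps of the previous epoch plus up to $d-1$ of the current one). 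First I would invoke the decrease lemma \eqref{eq:lazy-update} with $\x=\x_k$, $\z=\x_{\tau(k;d)}$, ${\rm T}=\x_{k+1}$, yielding
\begin{align*}
f(\x_k)-f(\x_{k+1}) \ge \gamma(\x_{k+1}) + \frac{M}{48}r_k^3 - \frac{11L^3}{M^2}\Big(\sum_{i=\tau(k;d)}^{k-1} r_i\Big)^3.
\end{align*}

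Next I would sum this over $k=d,\dots,K$ and bound the accumulated error term $\sum_k \big(\sum_{i=\tau(k;d)}^{k-1} r_i\big)^3$ by a constant multiple of $\sum_k r_k^3$. This is exactly what the combinatorial Lemmas~\ref{lm:first-r} and~\ref{lm:last-r} are designed for: within each epoch the inner sums start at the fixed index $\tau(k;d)=(t-1)d$, so after re-indexing the epoch-$t$ contribution is of the form $\sum_{k=d}^{d+p}(\sum_{i=0}^{k-1} r_i)^3$ with $p \le 2d-1$, and Lemma~\ref{lm:last-r} (applied to the tail of the relevant length-$2d$ block of step-lengths, possibly combined with Lemma~\ref{lm:first-r} for the first full block) bounds each such sum by $O(d^3)$ times the sum of $r_i^3$ over the at most $2d$ steps involved. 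Crucially each step-length $r_i^3$ is charged to at most $O(d)$ error-terms (the $d$ steps of its own epoch and the $d$ of the next), so the total error is at most $C d^2 \sum_{k} r_k^3$ for an absolute constant $C$ (I expect $C$ around $12$). With the choice $M=12dL$, the loss term $\frac{11L^3}{M^2}\cdot C d^2 \le \frac{11 C}{144}\cdot\frac{L}{?}$... more precisely $\frac{11L^3}{M^2}\cdot C d^2 = \frac{11 C L}{144}$, while the gain term is $\frac{M}{48}=\frac{dL}{4}$; choosing the constants so that $\frac{M}{48} \ge \frac{11 L^3}{M^2}\cdot C d^2$ makes the $\sum r_k^3$ contributions net nonnegative, so they can simply be dropped.

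After that cancellation, summing gives $f(\x_d)-f(\x_{K+1}) \ge \sum_{k=d}^{K}\gamma(\x_{k+1}) \ge (K-d)\min_{d<i\le K}\gamma(\x_i)$, and since $f(\x_d)=f(\x_0)$ (line 9) and $f(\x_{K+1})\ge f^*$ by Assumption~\ref{ass:boundedbelow}, we obtain \eqref{eq:ncobj}. The final implication is a routine unpacking of the definition \eqref{eq:gamma} of $\gamma$: if $\gamma(\x_i)\le\eta$ then both $\frac{1}{72\sqrt{2M}}\|\nabla f(\x_i)\|^{3/2}\le\eta$ and $-\frac{1}{648M^2}\lambda_{\min}(\nabla^2 f(\x_i))^3\le\eta$, i.e. $\|\nabla f(\x_i)\| \le (72\sqrt{2M}\,\eta)^{2/3}$ and $\lambda_{\min}(\nabla^2 f(\x_i)) \ge -(648M^2\eta)^{1/3}$; substituting $M=12dL$ and setting $\eta=(f(\x_0)-f^*)/(K-d)$ with $K-d = \Theta(\sqrt{dL}\,\epsilon^{-3/2})$ makes both right-hand sides $O(\epsilon)$ and $O(\sqrt{dL\epsilon})$ respectively (one absorbs the numerical constants into the $\OM$).

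The main obstacle is the bookkeeping in the second step: getting a \emph{clean} constant in the bound $\sum_k\big(\sum_{i=\tau(k;d)}^{k-1}r_i\big)^3 \le Cd^2\sum_k r_k^3$ while correctly handling the epoch boundaries — in particular that the window $[\tau(k;d),k-1]$ straddles two epochs and has length growing from $d$ up to $2d-1$ as $q$ ranges over an epoch — and then checking that this constant is compatible with the choice $M=12dL$ so that the $r_k^3$ terms are genuinely absorbed rather than merely bounded. Lemmas~\ref{lm:first-r} and~\ref{lm:last-r} are tailored to exactly the two shapes of inner sum that arise, so the work is in applying them with the right index shifts and summing the resulting per-epoch bounds without letting the constant blow up.
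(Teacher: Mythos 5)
Your overall strategy is exactly the paper's: apply the decrease lemma \eqref{eq:lazy-update} with the delayed point $\z=\x_{\tau(k;d)}$, telescope, convert $\|\x_k-\x_{\tau(k;d)}\|^3$ into sums of cubed step lengths via the triangle inequality together with Lemmas~\ref{lm:first-r} and~\ref{lm:last-r}, absorb the resulting error into the gain $\frac{M}{48}\sum_k r_k^3$ through the choice $M=\Theta(dL)$, and finish by unpacking the definition of $\gamma$ (your last step is correct and matches the paper). The epoch-boundary bookkeeping you identify as the main obstacle is handled in the paper by splitting the error sum into the full epochs (term $A$, via Lemma~\ref{lm:first-r}) and the final partial epoch (term $B$, via Lemma~\ref{lm:last-r}), plus the observation that the first epoch contributes nothing because $\x_0=\cdots=\x_d$.

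The one genuine gap is the order of your accumulated-error bound. You claim $\sum_k\bigl(\sum_{i=\tau(k;d)}^{k-1}r_i\bigr)^3\le Cd^2\sum_k r_k^3$ with $C$ around $12$, but the correct order is $\Theta(d^3)$: each window $[\tau(k;d),k-1]$ has length up to $2d-1$, so the power-mean step costs a factor of order $d^2$ per window, and each $r_j^3$ is charged to up to $2d-1$ windows, giving $O(d^2)\cdot O(d)=O(d^3)$ in total (the paper's explicit constant is $7(d+1)^3$); your two accounting remarks are individually right but multiply to $d^3$, not $d^2$. Consequently your verification of the cancellation proves the wrong inequality: the loss term is not $\frac{11CL}{144}$ but $\frac{77(d+1)^3L^3}{M^2}=\frac{77(d+1)^3L}{144d^2}\approx\frac{77}{144}dL$, to be compared against the gain $\frac{M}{48}=\frac{dL}{4}=\frac{36}{144}dL$ --- a genuine constant-factor fight rather than a win by a factor of $d$. (Indeed $M=12dL$ does not literally satisfy $M^3\ge 48\cdot 77\,(d+1)^3L^3$; one needs $M\ge 3696^{1/3}(d+1)L\approx 15.5\,(d+1)L$, which leaves the stated $\OM$-complexities intact but changes the constant in $M$.) So the scaling $M=\Theta(dL)$ and the final conclusion are right, but the step in which you drop the $\sum_k r_k^3$ contributions does not hold as you wrote it and must be redone with the $d^3$ bound and a correspondingly chosen constant in $M$.
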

\begin{proof}
We write the total number of iterations as $K=dt+p$, where $t=\floor{K/d}$ and $p=K\%d$. We denote $\x_{d-1}=\x_{d-2}\cdots =\x_0$ in the following analysis. We have
\begin{align}
\small\begin{split}
\label{eq:ncf}
    &f(\x_d)-f(\x_K) = \sum_{i=d}^{K-1} f(\x_i)-f(\x_{i+1})\\
    \overset{\eqref{eq:lazy-update}}{\geq}& \sum_{i=d}^{K-1}\left(\gamma(\x_{i+1}) + \frac{M}{48}\|\x_{i+1}-\x_i\|^3 - \frac{11L^3}{M^3}\|\x_i-\x_{\tau(i;d)}\|^3\right)\\
   =& \left(\sum_{i=d}^{K-1} \gamma(\x_{i+1})\right) + \sum_{i=d}^{K-1}\left(\frac{M}{48}\|\x_{i+1}-\x_i\|^3 - \frac{11L^3}{M^2}\|\x_i-\x_{\tau(i;d)}\|^3\right).
    \end{split}
\end{align}
We first focus on the term of
\begin{align*}
    &\sum_{i=d}^{K-1}\|\x_i-\x_{\tau(i;d)}\|^3=\underbrace{\sum_{i=d}^{dt-1} \|\x_i-\x_{\tau(i;d)}\|^3}_{A} + \underbrace{\sum_{i=dt}^{dt+p-1} \|\x_i-\x_{\tau(i;d)}\|^3}_{B}.
\end{align*}
For any integer $N\geq 1$, we have
\begin{align}
\begin{split}
\label{eq:batchsum}
    \sum_{i=Nd}^{(N+1)d-1}\|\x_i-\x_{\tau(i;d)}\|^3 
    \leq & \sum_{i=Nd}^{(N+1)d-1}\left(\sum_{k=(N-1)d}^{i-1} \|\x_{k+1}-\x_{k}\|\right)^3\\
    \leq & 3(d+1)^3\sum_{i=(N-1)d}^{(N+1)d-1}\|\x_{i+1}-\x_i\|^3,
\end{split}
\end{align}
where the first step comes from the triangle inequality such that
\begin{align*}
     \|\x_i-\x_{\tau(i;d)}\| 
    =\left\|\sum_{k=\tau(i;d)}^{i-1} (\x_{k+1}-\x_k)\right\|
    \leq \sum_{k=\tau(i;d)}^{i-1}\|\x_{k+1}-\x_k\|,
\end{align*}
and the second step is obtained by using Lemma~\ref{lm:first-r} with 
\begin{align*}
    r_k=\|\x_{(N-1)d+k+1}-\x_{(N-1)d+k}\|,
\end{align*}
which leads to 
\begin{align*}
     &\sum_{i=Nd}^{(N+1)d-1}\left(\sum_{k=(N-1)d}^{i-1} \|\x_{k+1}-\x_{k}\|\right)^3 = \sum_{i=d}^{2d-1}\left(\sum_{k=0}^{i-1}r_k\right)^3
    \\
    &\leq\sum_{i=d}^{2d-1}\left(\sum_{k=0}^{i}r_k\right)^3\overset{\eqref{eq:induct1}}{\leq} 3(d+1)^3\sum_{k=0}^{2d-1}r_k^3 \\
     &= 3(d+1)^3\sum_{i=(N-1)d}^{(N+1)d-1}\|\x_{i+1}-\x_i\|^3.
\end{align*}
Hence, we have
\begin{align*}
    A \leq & \sum_{N=1}^{t}\left(\sum_{i=Nd}^{(N+1)d-1}\|\x_i-\x_{\tau(i;d)}\|^3 \right)\\
    \overset{\eqref{eq:batchsum}}{\leq} & 6(d+1)^3\sum_{k=d}^{(t-1)d-1}\|\x_{k+1}-\x_k\|^3 \\
    &+3(d+1)^3 \left(\sum_{k=0}^{d-1}\|\x_{k+1}-\x_k\|^3+\sum_{k=(t-1)d}^{td} \|\x_{k+1}-\x_k\|^3\right).
\end{align*}
Using Lemma~\ref{lm:last-r} with $r_k=\|\x_{d(t-1)+k+1}-\x_{d(t-1)+k}\|$, we obtain
\begin{align*}
    B&=\sum_{i=dt}^{dt+p}\|\x_{i}-\x_{d(t-1)}\|^3\leq \sum_{i=d}^{d+p}\left(\sum_{k=0}^{i-1}r_k^3\right)\\
    &\overset{\eqref{eq:last-r}}{\leq} 4(m+1)^3\sum_{i=d(t-1)}^{dt-1}\|\x_{i+1}-\x_i\|^3 + 4(d+1)^3\sum_{dt}^{dt+p}\|\x_{i+1}-\x_i\|^3.
\end{align*}
Combining above results, we have
\begin{align}
\label{eq:leftsumup}
\begin{split}
  \!\!& \sum_{i=d}^{K-1}\|\x_i-\x_{\tau(i;d)}\|^3 =   A+B \\
  \!\!\leq & 7(d+1)^3  \sum_{k=d}^{K-1} \|\x_{k+1}-\x_k\|^{3} + 3(d+1)^3 \sum_{k=0}^{d-1}\|\x_{k+1}-\x_k\|^3,
\end{split}
\end{align}
which implies
\begin{align*}
     & \frac{M}{48}\sum_{i=d}^{K-1}\left(\|\x_{i+1}-\x_i\|^3\right) - \frac{11L^3}{M^2}\sum_{i=d}^{K-1}\left(\|\x_i-\x_{\tau(i;d)}\|^3\right)\\
     \overset{\eqref{eq:leftsumup}}{\geq} & \left(\frac{M}{48}-\frac{77(d+1)^3L^3}{M^2}\right) \sum_{i=d}^{K-1}\|\x_{i+1}-\x_i\|^3 \\
     &\quad-\frac{ 33L^3 (d+1)^3}{M^2}\sum_{i=0}^{d-1}\|\x_{i+1}-\x_i\|^3 \geq 0.
\end{align*}
The last inequality comes from the facts that we set $M=12dL$ and~$\x_0=\x_1\cdots\x_{d-1}=\x_d$.
Connecting above results to inequality~\eqref{eq:ncf}, we obtain
\begin{align*}
   &f(\x_0)-f^*\geq f(\x_d)-f(\x_K) \\
   &\overset{\eqref{eq:ncf}}{\geq}  \sum_{i=d}^{K-1} \gamma(\x_{i+1})+\frac{M}{48}\sum_{i=d}^{K-1}\left(\|\x_{i+1}-\x_i\|^3\right) \\
   &\quad\quad- \frac{11L^3}{M^2}\sum_{i=d}^{K-1}\|\x_i-\x_{\tau(i;d)}\|^3\geq\sum_{i=d}^{K-1} \gamma(\x_{i+1}),
\end{align*}   
which proves~\eqref{eq:ncobj}.
By setting $K=\OM\big(\sqrt{dL}\epsilon^{-3/2}\big)$, we can find some $\x_i$ with $d \leq i\leq K$ such that $\x_i$ is a $\big(\epsilon,\sqrt{dL\epsilon}\big)$-second-order stationary point of $f(\cdot)$.
\end{proof}

\subsubsection{The Analysis in Strongly-Convex Case}
The classical second-order methods enjoy local superlinear convergence for minimizing the strongly-convex objective on single machine~\cite{nesterov2006cubic,nesterov2018lectures,nocedal1999numerical}. 
However, most of distributed second-order methods 
only achieve linear convergence rate because of the trade-off between the communication complexity and the convergence rate~\cite{wang2018giant,shamir2014communication,ye2022accelerated,reddi2016aide,zhang2015disco}.

In contrast, the efficient communication mechanism of C2EDEN still keeps the superlinear convergence like classical second-order methods. The following theorem formally presents this property.

\begin{theorem}
\label{theorem:local}
Under Assumption~\ref{ass:heslip} and \ref{ass:mustrongly},
running Algorithm~\ref{alg:C2EDEN} with $M\geq 0$ and the initial point $\vx_0$ such that
\begin{align}
    \label{eq:initial}
        \|\nabla f(\x_0)\|\leq \frac{\mu^2}{2(M+3L)},
\end{align}
then for any $k\geq d$, we have
\begin{align}\label{ieq:cvx-grad}
\|\nabla f(\x_k)\|\leq \frac{\mu^2}{M+3L}\left(\frac{1}{2}\right)^{h(k)}
\end{align}
where $  h(k)= {\left((1+d)^{\Floor{k/d}\%2}+k\%d\right)(1+d)^{\Floor{\Floor{k/d}/2}-1}}.$
\end{theorem}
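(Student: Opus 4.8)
The plan is to recast the claim as a scalar recursion for the rescaled gradient norm $u_k\triangleq\frac{M+3L}{\mu^2}\|\nabla f(\x_k)\|$ and then prove $u_k\le(1/2)^{h(k)}$ for all $k\ge d$ by induction, the inductive step being reduced to two elementary inequalities for $h$.

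First I would establish a one-step bound. For $k\ge d$ the server step \eqref{eq:server-update} is $\x_{k+1}={\rm T}_M(\nabla f(\x_k),\mA;\x_k)$ with $\mA=\nabla^2 f(\x_{\tau(k;d)})$ and $\tau(k;d)=d(\Floor{k/d}-1)$. Assumption~\ref{ass:mustrongly} gives $\mA\succeq\mu\I$, so the cubic sub-problem is strongly convex and its optimality condition $\nabla f(\x_k)+\bigl(\mA+\tfrac{M}{2}\|\x_{k+1}-\x_k\|\I\bigr)(\x_{k+1}-\x_k)=\0$ yields $\|\x_{k+1}-\x_k\|\le\mu^{-1}\|\nabla f(\x_k)\|$; rearranging the same identity and using Assumption~\ref{ass:heslip} to bound $\|\nabla f(\x_{k+1})-\nabla f(\x_k)-\nabla^2 f(\x_k)(\x_{k+1}-\x_k)\|\le\tfrac{L}{2}\|\x_{k+1}-\x_k\|^2$ and $\|(\nabla^2 f(\x_k)-\mA)(\x_{k+1}-\x_k)\|\le L\|\x_k-\x_{\tau(k;d)}\|\|\x_{k+1}-\x_k\|$ gives
\begin{align*}
\|\nabla f(\x_{k+1})\|\le\tfrac{M+L}{2}\|\x_{k+1}-\x_k\|^2+L\|\x_k-\x_{\tau(k;d)}\|\,\|\x_{k+1}-\x_k\|.
\end{align*}
Strong convexity also furnishes the unique minimizer $\x^*$ with $\|\x-\x^*\|\le\mu^{-1}\|\nabla f(\x)\|$, hence $\|\x_k-\x_{\tau(k;d)}\|\le\mu^{-1}(\|\nabla f(\x_k)\|+\|\nabla f(\x_{\tau(k;d)})\|)$. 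Substituting both norm bounds, collecting the quadratic coefficients via $\tfrac{M+L}{2}+L=\tfrac{M+3L}{2}$ and using $\tfrac{L}{M+3L}\le\tfrac13$ (valid since $M\ge0$), I arrive at the clean recursion
\begin{align*}
u_{k+1}\le\tfrac12 u_k^2+\tfrac13 u_k\,u_{\tau(k;d)},\qquad k\ge d.
\end{align*}

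Then I would run the induction. The base case $k=d$ is immediate: condition \eqref{eq:initial} says exactly $u_0\le\tfrac12$, and since Algorithm~\ref{alg:C2EDEN} resets $\x_d=\x_0$ we get $u_d=u_0\le\tfrac12=(1/2)^{h(d)}$ because $h(d)=1$ (this also gives $u_0\le(1/2)^{h(0)}$ since $h(0)=(1+d)^{-1}<1$). For the step, assume $u_j\le(1/2)^{h(j)}$ for $d\le j\le k$; since $\tau(k;d)$ is either $0$ or at least $d$, the hypothesis applies to $u_{\tau(k;d)}$ as well, so $u_{k+1}\le\tfrac12(1/2)^{2h(k)}+\tfrac13(1/2)^{h(k)+h(\tau(k;d))}$, and as $\tfrac12+\tfrac13<1$ it suffices to verify $2h(k)\ge h(k+1)$ and $h(k)+h(\tau(k;d))\ge h(k+1)$. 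Writing $k=td+q$ with $t=\Floor{k/d}\ge1$, $q=k\%d$ and $a_t\triangleq(1+d)^{\Floor{t/2}-1}$, so that $h(td+q)=\bigl((1+d)^{t\%2}+q\bigr)a_t$, routine floor/parity bookkeeping yields the identities $h(\tau(k;d))=h((t-1)d)=a_t$ and $h((t+1)d)=(1+d)\,a_t$. If $q\le d-2$ then $k+1$ stays in epoch $t$, $h(k+1)=h(k)+a_t$, and both inequalities follow at once from $h(k)=((1+d)^{t\%2}+q)a_t\ge a_t$ and $h(\tau(k;d))=a_t$; if $q=d-1$ then $k+1=(t+1)d$, $h(k+1)=(1+d)a_t$, and since $h(k)\ge d\,a_t$ the inequalities reduce to $2d\,a_t\ge(1+d)a_t$ and $(d{+}1)a_t\ge(1+d)a_t$, both trivially true for $d\ge1$. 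This completes the induction, and rescaling back gives \eqref{ieq:cvx-grad}.

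I expect the main obstacle to be this second half rather than the one-step bound: one must read off correctly from Algorithm~\ref{alg:C2EDEN} and Figure~\ref{figure:pipeline} that iteration $k$ genuinely uses $\nabla^2 f(\x_{\tau(k;d)})$ with $\tau(k;d)=d(\Floor{k/d}-1)$ — in particular that epochs $1$ and $2$ both reuse $\nabla^2 f(\x_0)$ because $\x_d=\x_0$, which is exactly why $h$ carries the $\Floor{k/d}\%2$ — and then match the rather opaque closed form of $h$ to the recursion so that the parity case analysis at the epoch boundaries closes with slack (which it does, via $2d\ge1+d$).
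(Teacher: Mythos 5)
Your proposal is correct, and it bottoms out in the same scalar recursion that drives the paper's argument, but it differs in two substantive ways. First, the paper imports the one-step inequality $\|\nabla f(\x_{k+1})\|\le\frac{M+3L}{2\mu^2}\|\nabla f(\x_k)\|^2+\frac{L}{\mu^2}\|\nabla f(\x_k)\|\,\|\nabla f(\x_{\tau(k;d)})\|$ wholesale from Section~5 of \citet{doikov2022second}, whereas you rederive it from the stationarity condition of the cubic subproblem together with $\nabla^2 f(\x_{\tau(k;d)})\succeq\mu\I$ and Assumption~\ref{ass:heslip}; your derivation (step bound $\|\x_{k+1}-\x_k\|\le\mu^{-1}\|\nabla f(\x_k)\|$, the three-term splitting of $\nabla f(\x_{k+1})$, and $\|\x_k-\x_{\tau(k;d)}\|\le\mu^{-1}(\|\nabla f(\x_k)\|+\|\nabla f(\x_{\tau(k;d)})\|)$ via the minimizer) is correct and makes the proof self-contained. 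Second, the inductions are organized differently: the paper first proves $s_{k+1}\le s_k\le\frac12$, upgrades the recursion to $s_{k+1}\le s_k s_{\tau(k;d)}$, runs an epoch-level induction for $s_{td}$, and only then fills in within-epoch indices, while you prove $u_k\le(1/2)^{h(k)}$ in one shot, reducing the inductive step to the two arithmetic inequalities $2h(k)\ge h(k+1)$ and $h(k)+h(\tau(k;d))\ge h(k+1)$. Your bookkeeping checks out ($h(\tau(k;d))=(1+d)^{\Floor{t/2}-1}$, $h((t+1)d)=(1+d)^{\Floor{t/2}}$, $h(d)=1$, $h(0)=(1+d)^{-1}$, and $2d\ge 1+d$ at epoch boundaries), and the coefficient sum $\frac12+\frac13<1$ closes the step. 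Your route is arguably cleaner, avoiding the monotonicity lemma and the separate treatment of $s_{td}$ versus general $k$; the paper's staged version has the advantage of exposing the product recursion $s_{k+1}\le s_k s_{\tau(k;d)}$ explicitly, which is exactly what its follow-up remark on superlinearity quotes.
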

\begin{proof}
According to the analysis in Section 5 of \citet{doikov2022second}, the cubic-regularized update (\ref{eq:server-update}) satisfies
\begin{align*}
    \|\nabla f(\x_{k+1})\|{\leq} \frac{M+3L}{2\mu^2}\|\nabla f(\x_k)\|^2 + \frac{L}{\mu^2}\|\nabla f(\x_k)\|\|\nabla f(\x_{\tau(k;d)})\|.
\end{align*}
We let $c=\frac{M+3L}{\mu^2}$ and $s_{j}=c\|\nabla f(\x_{j})\|$, then above inequality can be written as
$  s_{k+1}\leq \frac{1}{2}s_k^2+\frac{1}{2}s_ks_{\iota(k;d)}.$
We first use induction to show
\begin{align}
\label{eq:scinduc1}
    s_{k+1}\leq s_k\quad\text{and}\quad s_k\leq \frac{1}{2}.
\end{align}
hold for any  $k\geq d$.
For $k=d$, the initial condition~\eqref{eq:initial} and the setting $\x_0=\x_1\cdots=\x_d$ in the algorithm means 
$  s_{d+1}\leq s_0^2 \leq s_0=s_d$ and $s_d=s_0\leq \frac{1}{2}$.
Suppose the results of \eqref{eq:scinduc1} hold for any $k\leq k'-1$. Then for $k=k'$, we have $s_{k'+1}\leq \frac{1}{2} s_{k'}^2 + \frac{1}{2}s_{k'} s_{\tau(k';d)}\overset{\eqref{eq:scinduc1}}{\leq} s_{k'} \overset{\eqref{eq:scinduc1}}{\leq} \frac{1}{2},$
which finish the induction. Thus, we have
\begin{align}
\label{eq:sk1sk}
    s_{k+1}\leq\frac{1}{2}s_k^2+\frac{1}{2}s_ks_{\tau(k;d)}\overset{\eqref{eq:scinduc1}}{\leq} s_ks_{\tau(k;d)}.
\end{align}

Then we use induction to show 
\begin{align}
\label{eq:stm}
s_{td}\leq \left(\frac{1}{2}\right)^{(1+d)^{\Floor{(t+1)/2}-1}},
\end{align}
for all $t\geq 1$.
For $t=1$, we directly have $s_d=s_0\leq \frac{1}{2}$.
Suppose the inequality \eqref{eq:stm} holds for $t=t'$.
For $t=t'+1$, we have 
\begin{align}
\label{eq:t+1}
\begin{split}
    s_{(t'+1)d}&\overset{\eqref{eq:scinduc1}}{\leq} s_{(t'+1)d-1}s_{t'd-d}\overset{\eqref{eq:sk1sk}}{\leq} s_{(t'+1)d-2}s_{(t'-1)d}^2\\
    &\leq\cdots\leq s_{t'd}s_{(t'-1)d}^{d}\\
    & \overset{\eqref{eq:stm}}{\leq} \left(\frac{1}{2}\right)^{(1+d)^{\Floor{(t'+1)/2}-1}}\left(\frac{1}{2}\right)^{(1+d)^{\Floor{t'/2}-1}d}.
\end{split}
\end{align}
If $t'$ is an even number, we can write $t'=2q$ and it holds that
\begin{align*}
    s_{(t'+1)d} \overset{\eqref{eq:t+1}}{\leq}& \left(\frac{1}{2}\right)^{(1+d)^{q-1}+(1+d)^{q-1}d}\\
    =& \left(\frac{1}{2}\right)^{(1+d)^{q}} 
    = \left(\frac{1}{2}\right)^{(1+d)^{\Floor{(t'+2)/2}-1}}.
\end{align*}
If $t'$ is an odd number, we can write $t'=2q+1$ and it holds that 
\begin{align*}
    s_{(t'+1)d} \overset{\eqref{eq:t+1}}{\leq}& \left(\frac{1}{2}\right)^{(1+d)^{q}+(1+d)^{q-1}d}\\
    \leq & \left(\frac{1}{2}\right)^{(1+d)^{q}} 
    =  \left(\frac{1}{2}\right)^{(1+d)^{\Floor{(t'+2)/2}-1}}.
\end{align*}
Above discussion finishes the induction.

For the $k$-th iteration, we write $k=dt+p$, where $t=\floor{k/d}$ and~$p=k\%d$. Then, we have
\begin{align*}
\small
\begin{split}    
    s_k \overset{\eqref{eq:sk1sk}}{\leq}&  s_{k-1}s_{(t-1)d}\leq s_{td}s^{p}_{(t-1)d}\\
    \overset{\eqref{eq:stm}}{\leq} & \left(\frac{1}{2}\right)^{(1+d)^{\Floor{(t+1)/2}-1}}\left(\frac{1}{2}\right)^{(1+d)^{\Floor{t/2}-1}p}\leq  \left(\frac{1}{2}\right)^{\left((1+d)^{t\%2}+p\right)(1+d)^{\Floor{t/2}-1}}.
\end{split}    
\end{align*}
Substituting the definition of $s_k$ into above result, we obtain~\eqref{ieq:cvx-grad}.
\end{proof}
\begin{remark}
We can verify the superlinear convergence of C2EDEN as follows
\begin{align*}
    \lim_{k\to\infty}\frac{\|\nabla f(\x_{k+1})\|}{\|\nabla f(\x_k)\|} = \lim_{k\to\infty} \frac{s_{k+1}}{s_k} \overset{\eqref{eq:sk1sk}}{\leq} \lim_{k\to\infty}s_{\tau(k;d)} \overset{\eqref{ieq:cvx-grad}}{=} 0.
\end{align*}
\end{remark}

\begin{remark}
Theorem~\ref{theorem:local} indicates the local superlinear convergence rate of C2CEDEN in strongly-convex case can be obtained by taking $M=0$, which leads to the step of line 23 in Algorithm~\ref{alg:C2EDEN} has the closed form expression of $\x_{k+1}=\H^{-1}\g_k$.
\end{remark}


\section{Experiment}
\label{sec:exp}

\begin{figure*}[ht]
    \centering
    \begin{tabular}{cccc}
    \includegraphics[scale=0.22]{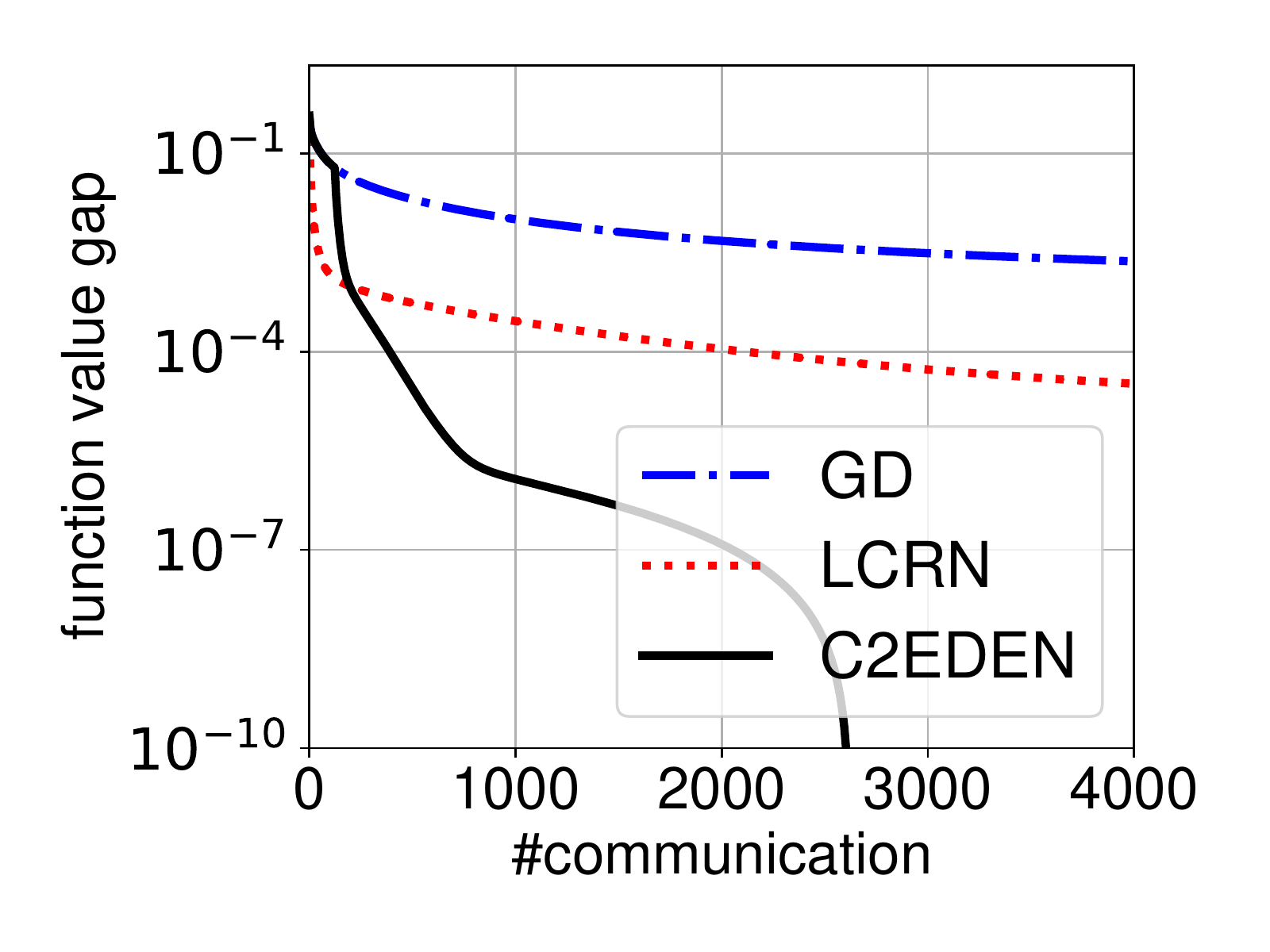} &  \includegraphics[scale=0.22]{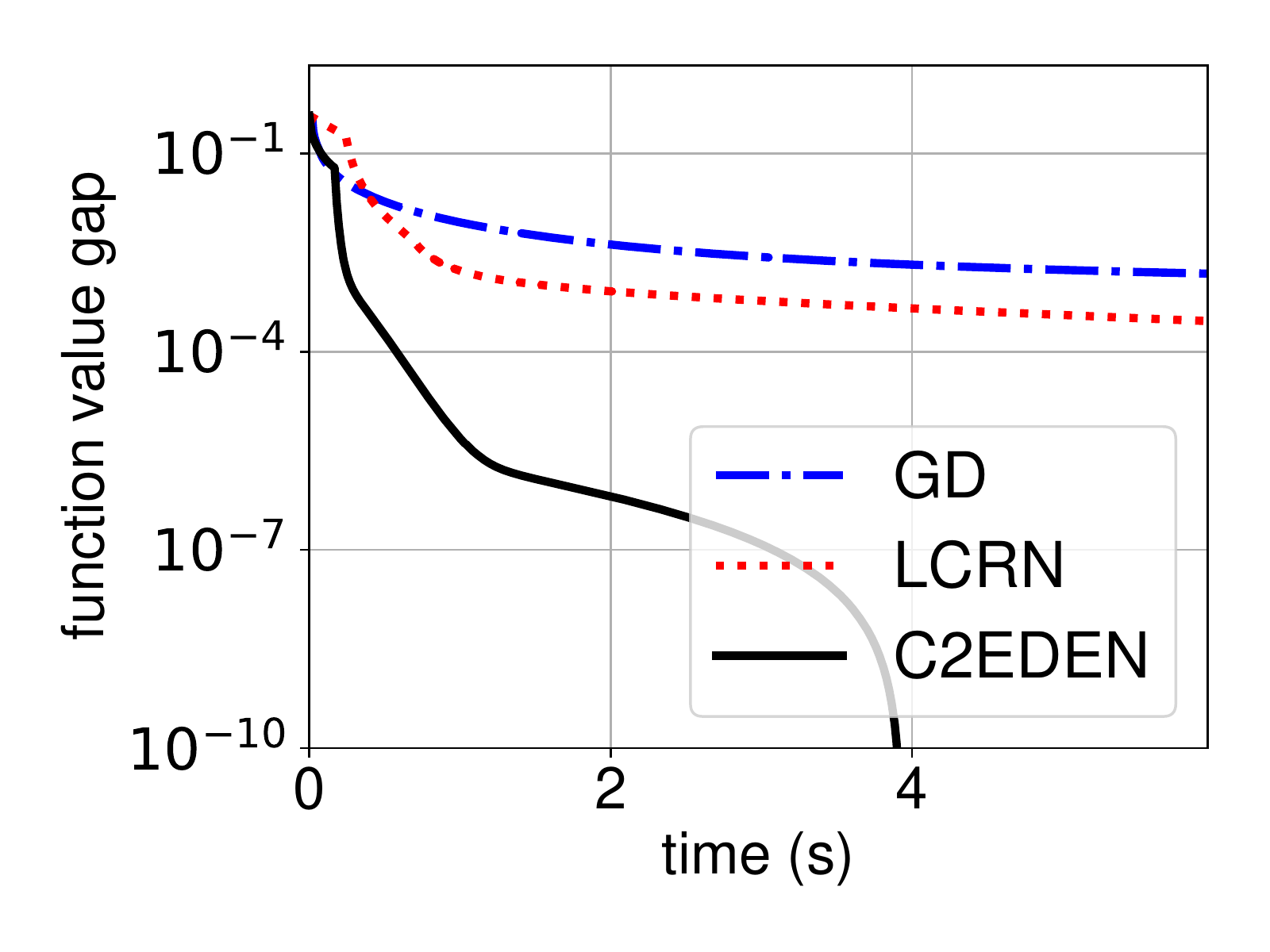} &
    \includegraphics[scale=0.22]{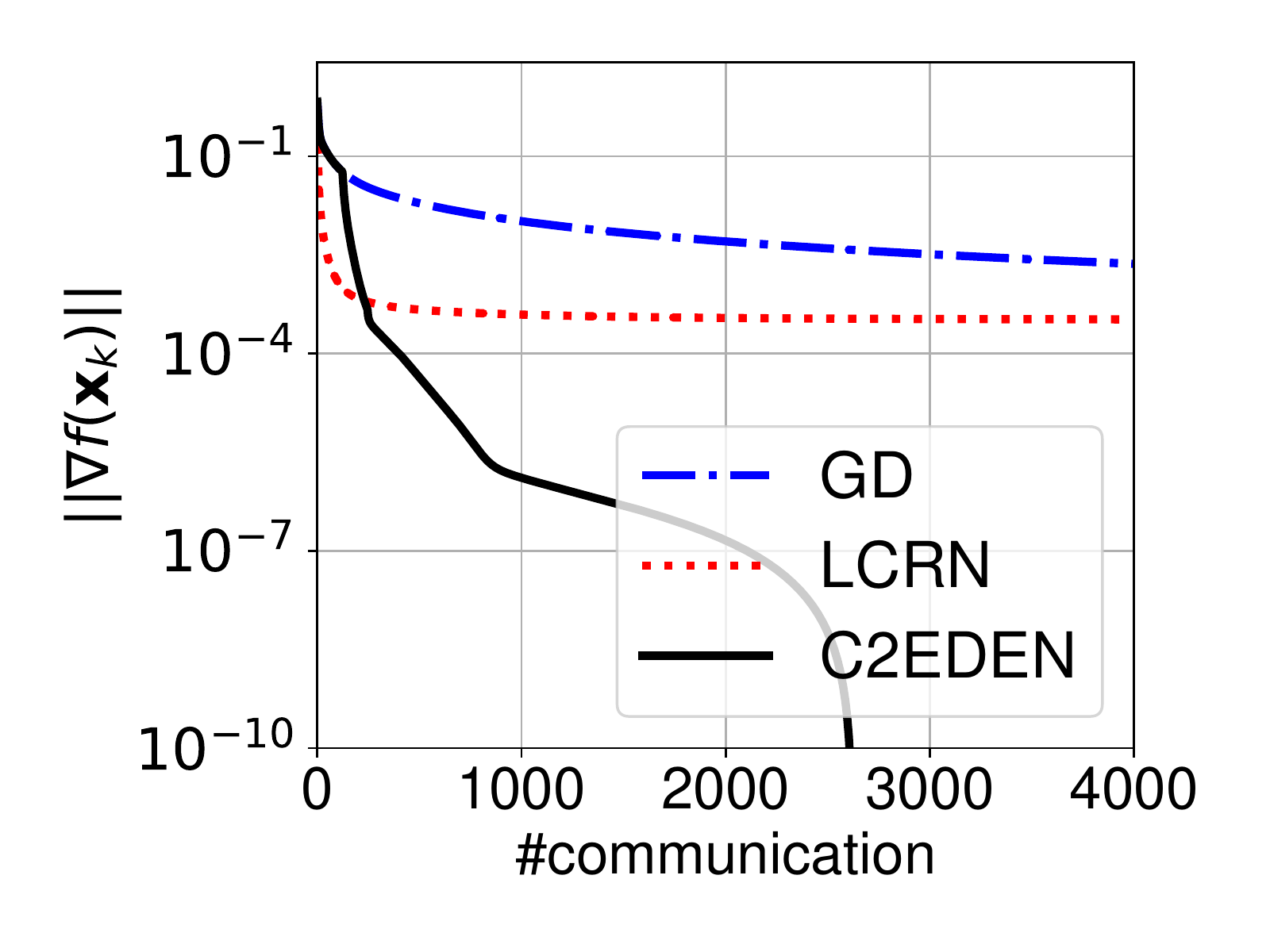} &
    \includegraphics[scale=0.22]{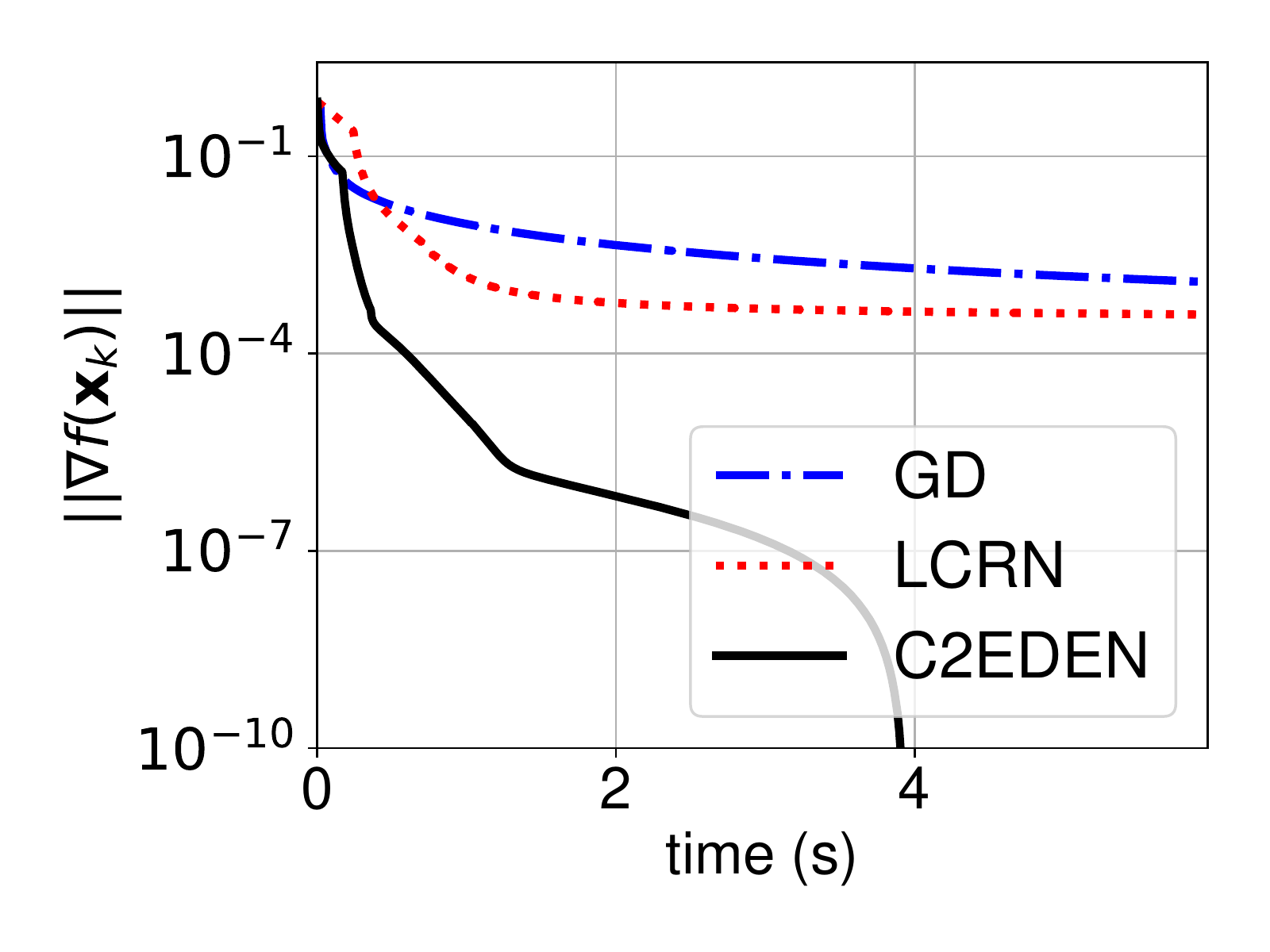} \\
    (a)  $ \#$communication vs. gap    &
    (b)   time (s) vs. gap    &
    (c) $ \#$communication vs. $ \Vert \nabla f(x_k) \Vert$     &
    (d)  time (s) vs. $ \Vert \nabla f(x_k) \Vert$
    \end{tabular}
    \caption{The results of the model of nonconvex regularized logistic regression on a9a ($n$=32).}
    \label{fig:nc-a9a}
\end{figure*}

\begin{figure*}[ht]
    \centering
    \begin{tabular}{cccc}
    \includegraphics[scale=0.22]{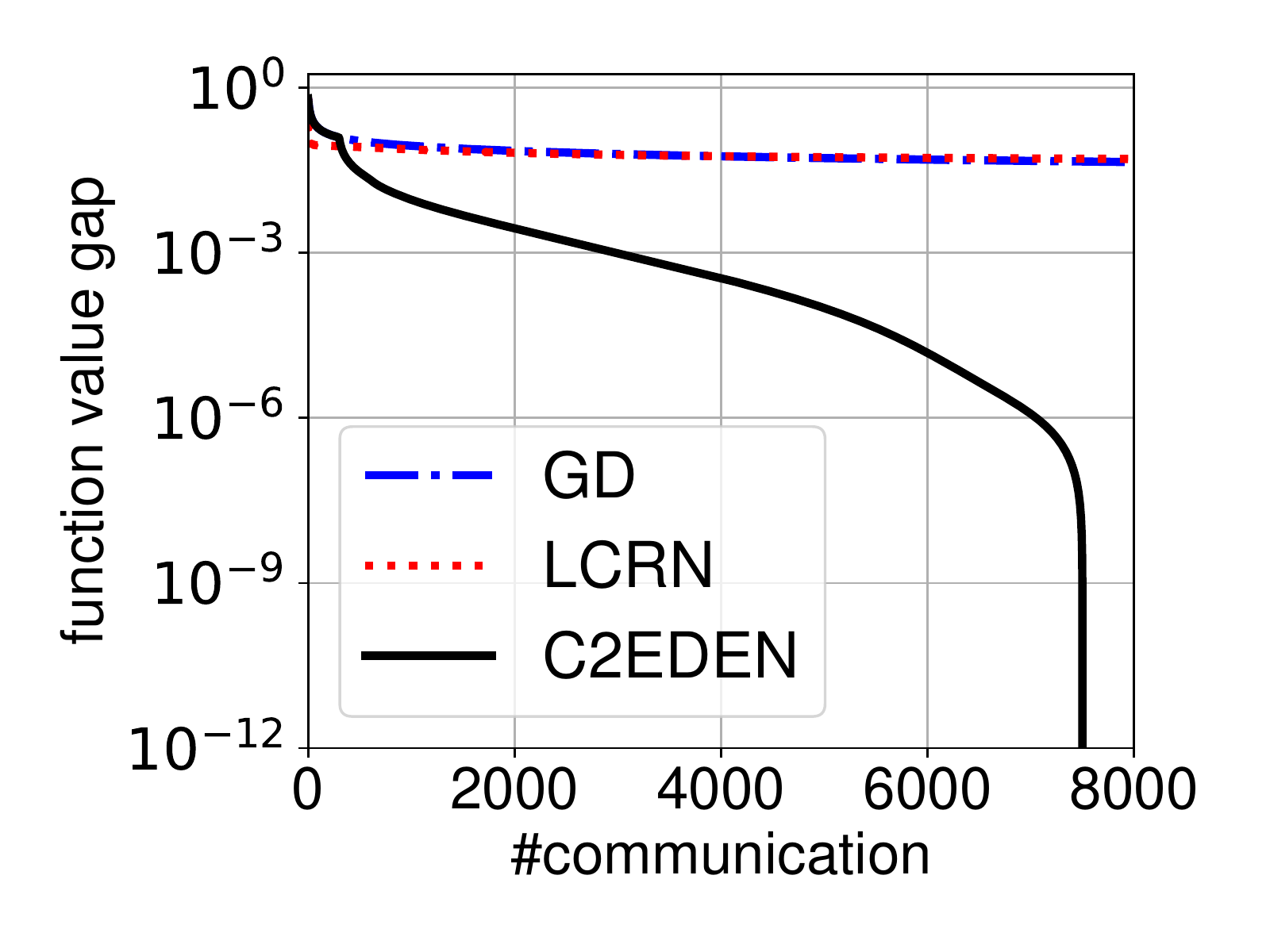}     &  \includegraphics[scale=0.22]{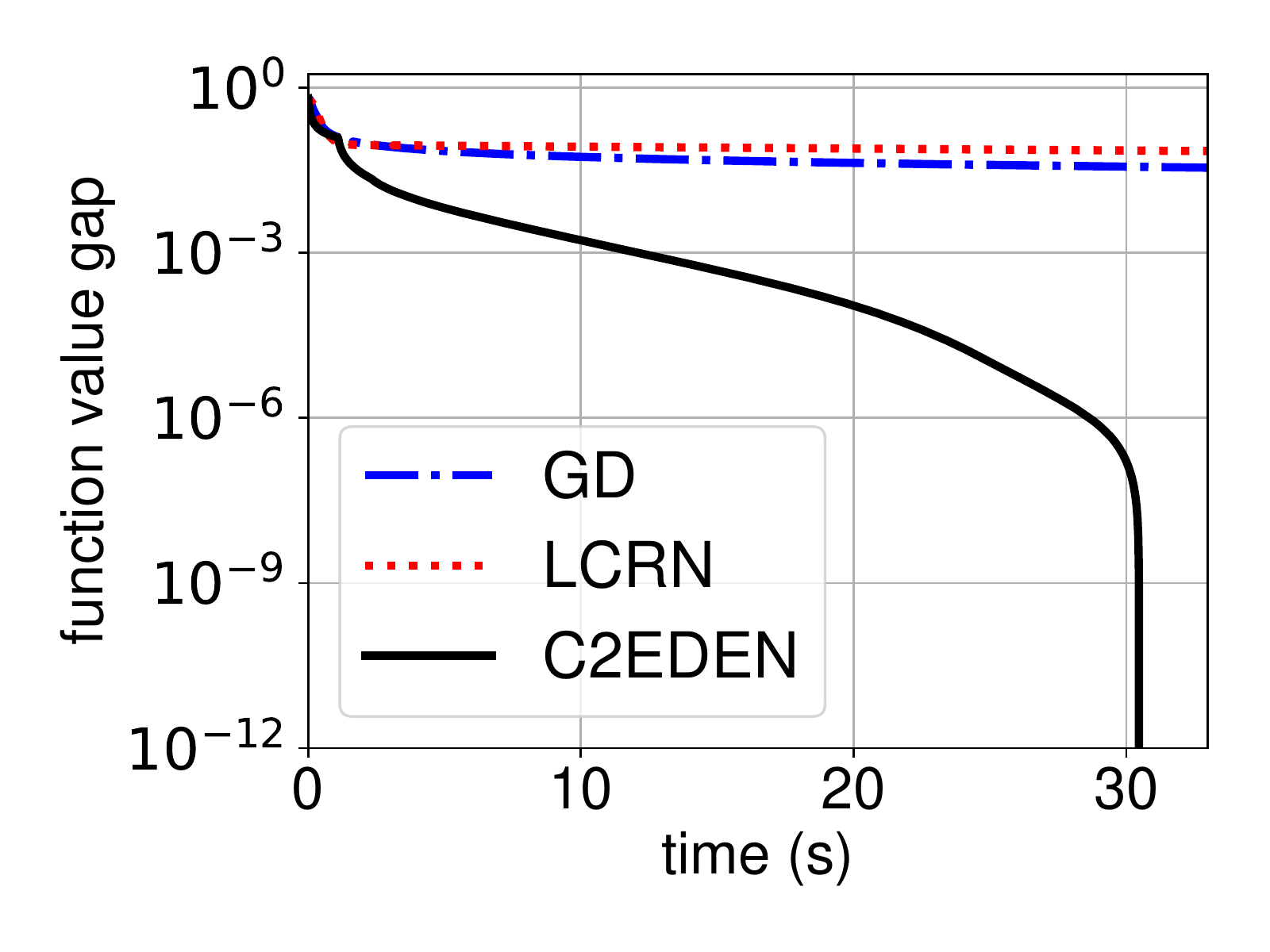} &
    \includegraphics[scale=0.22]{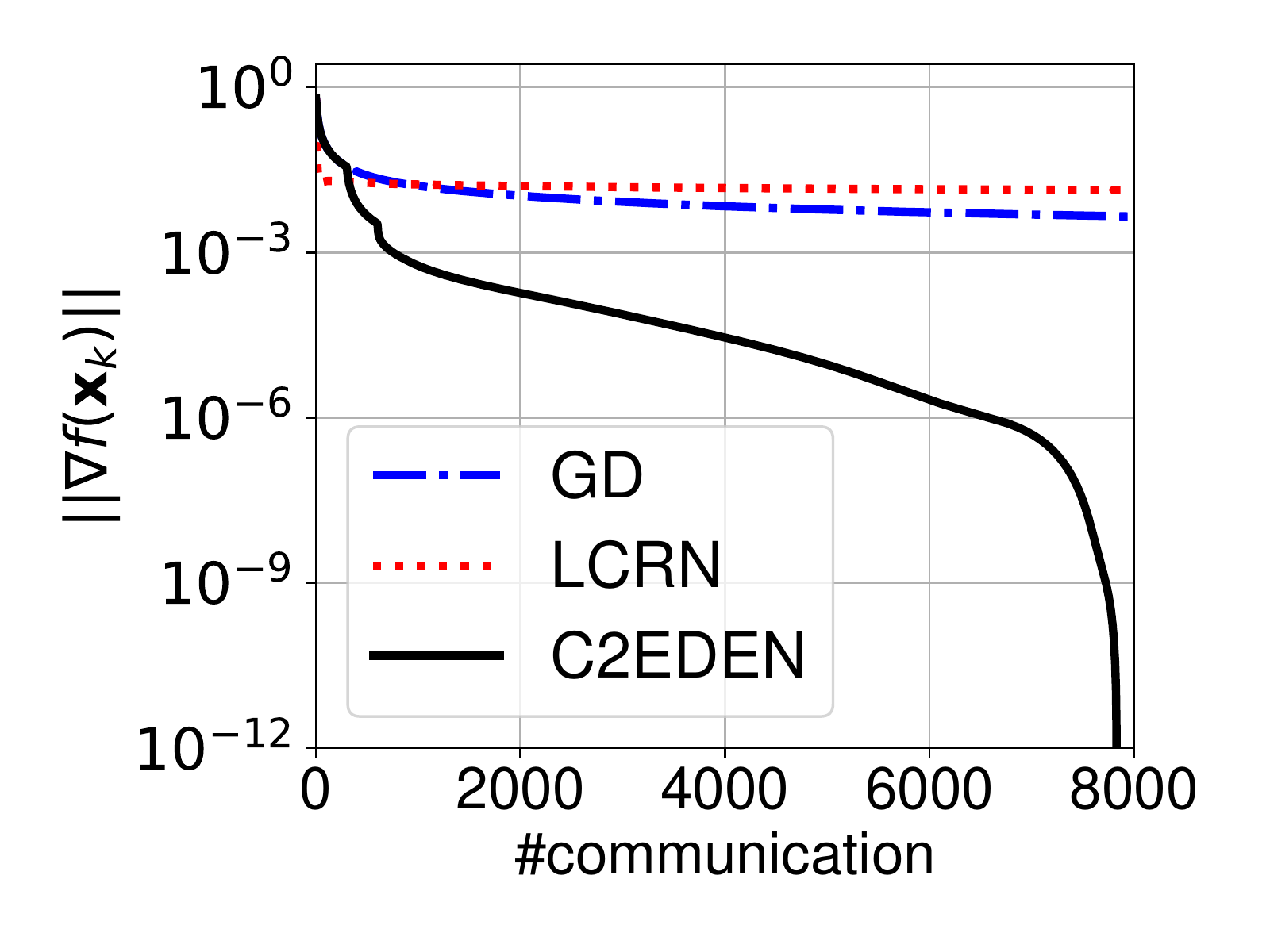}  &
    \includegraphics[scale=0.22]{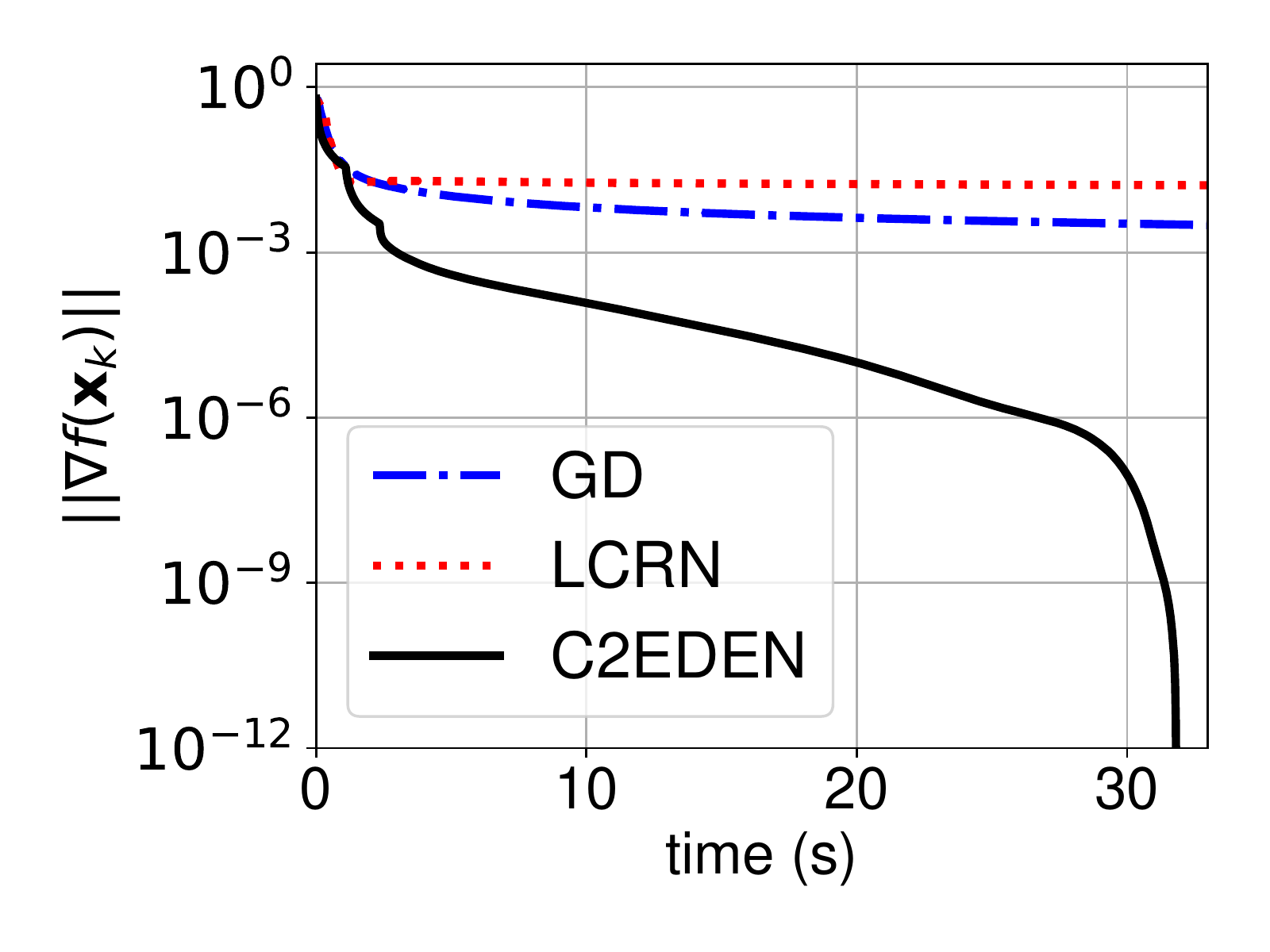}  \\
     (a)   $ \#$communication vs. gap    &
    (b)   time (s) vs. gap     &
    (c) $ \#$communication vs. $ \Vert \nabla f(x_k) \Vert$     &
    (d)   time (s) vs. $ \Vert \nabla f(x_k) \Vert$
    \end{tabular}
    \caption{The results of the model of nonconvex regularized logistic regression on w8a ($n$=32).}
    \label{fig:nc-w8a}
\end{figure*}

\begin{figure*}[ht]
    \centering
    \begin{tabular}{cccc}
    \includegraphics[scale=0.22]{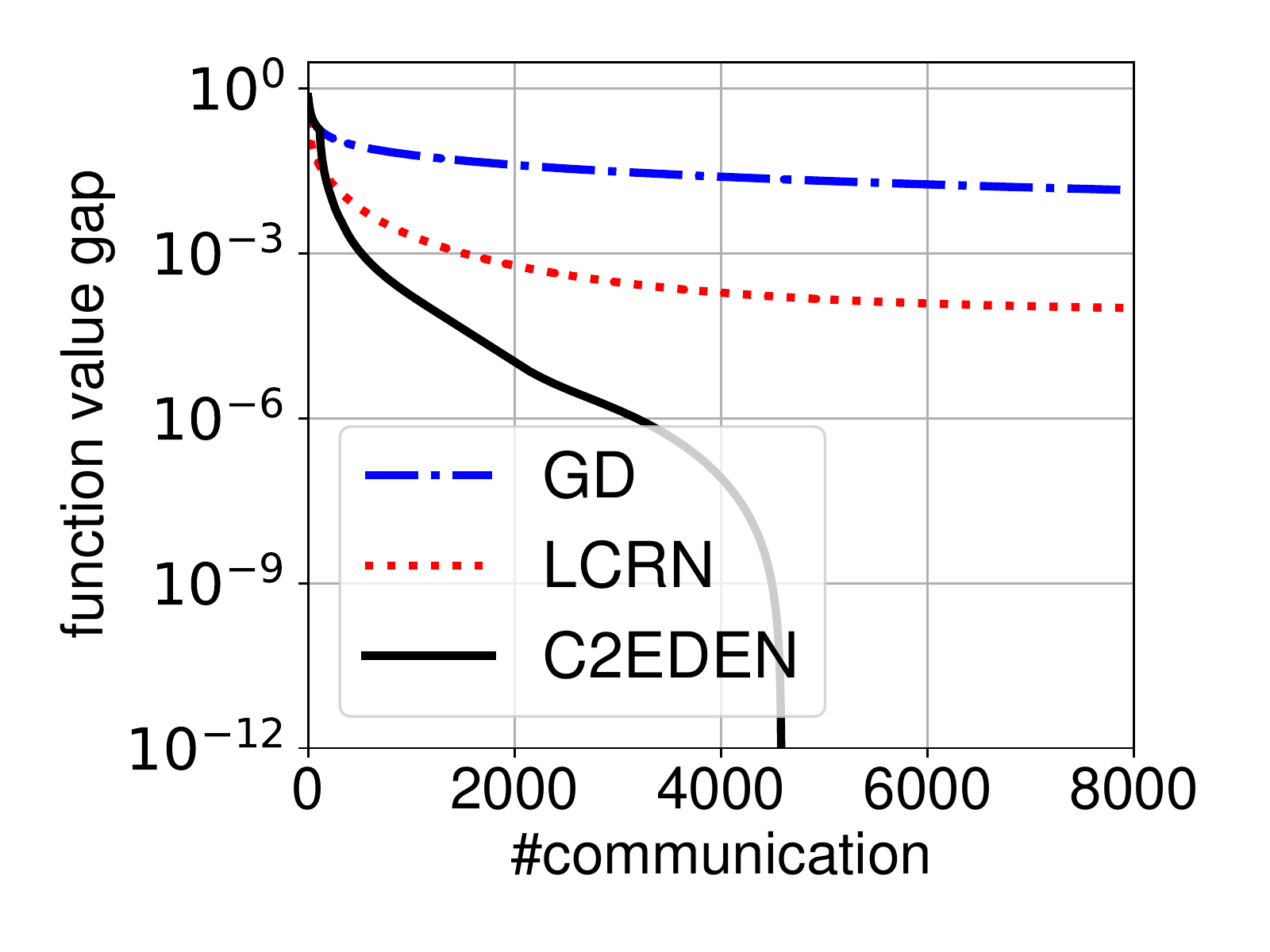}     &  \includegraphics[scale=0.22]{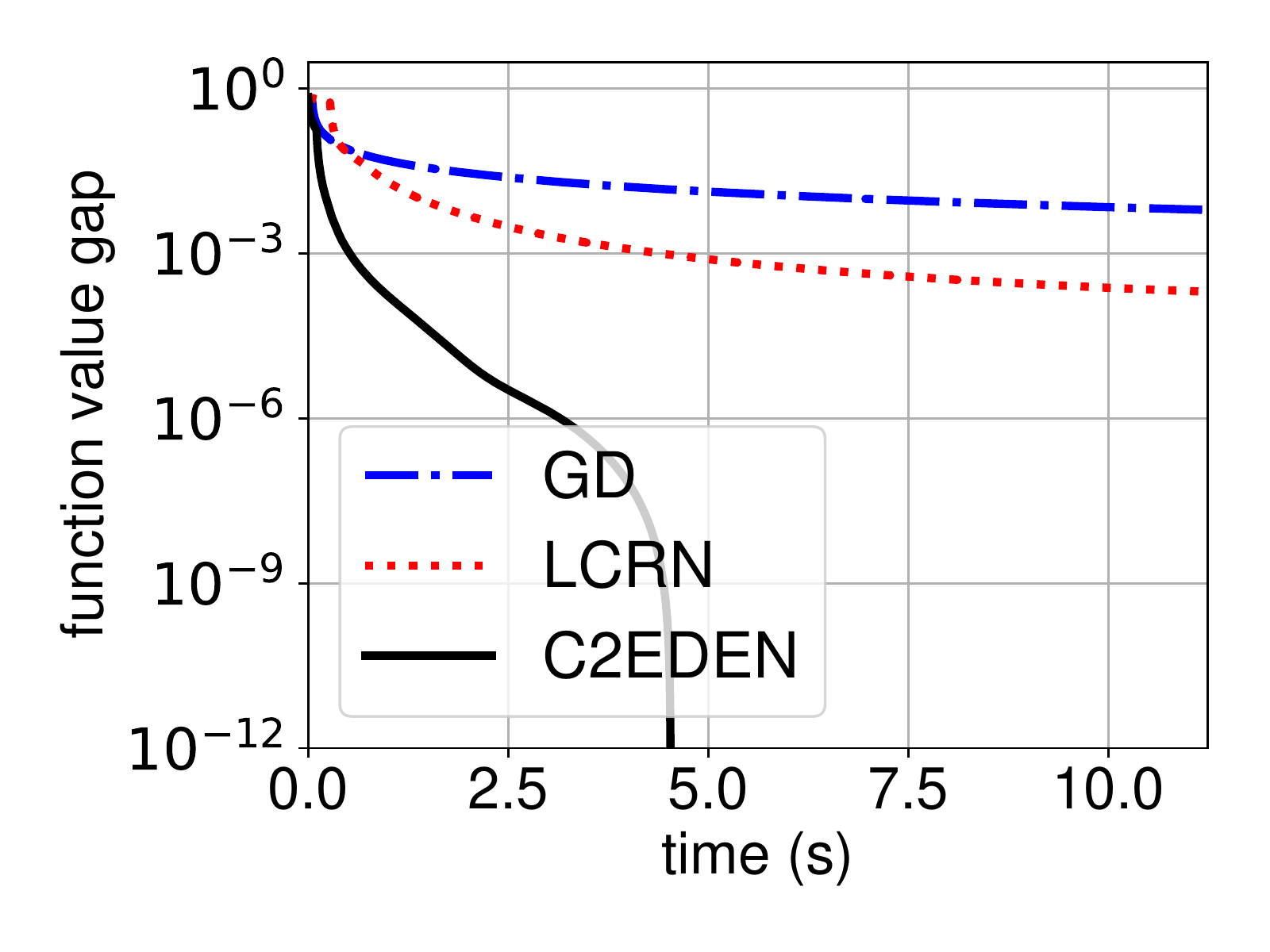} &
    \includegraphics[scale=0.22]{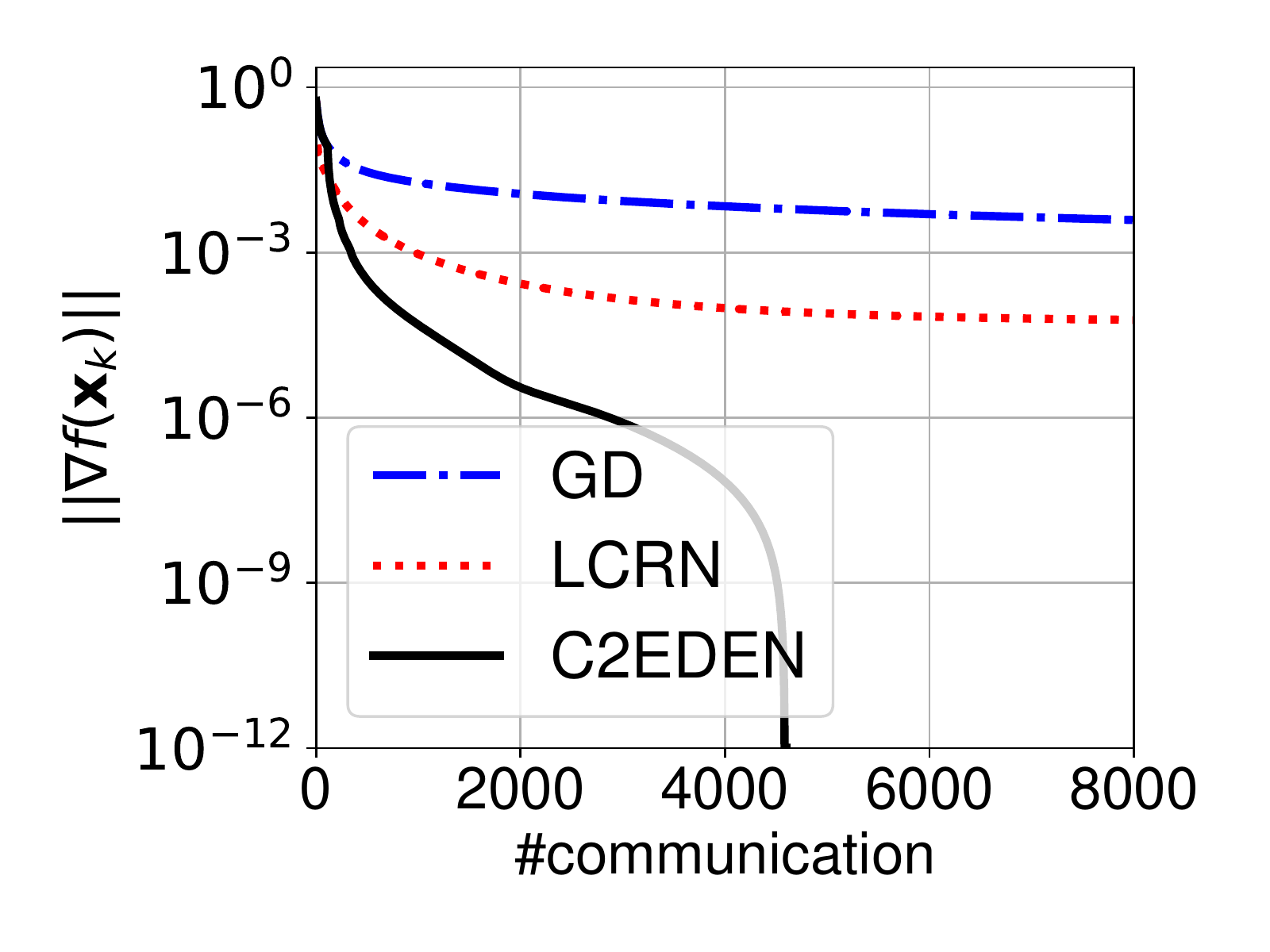}  &
    \includegraphics[scale=0.22]{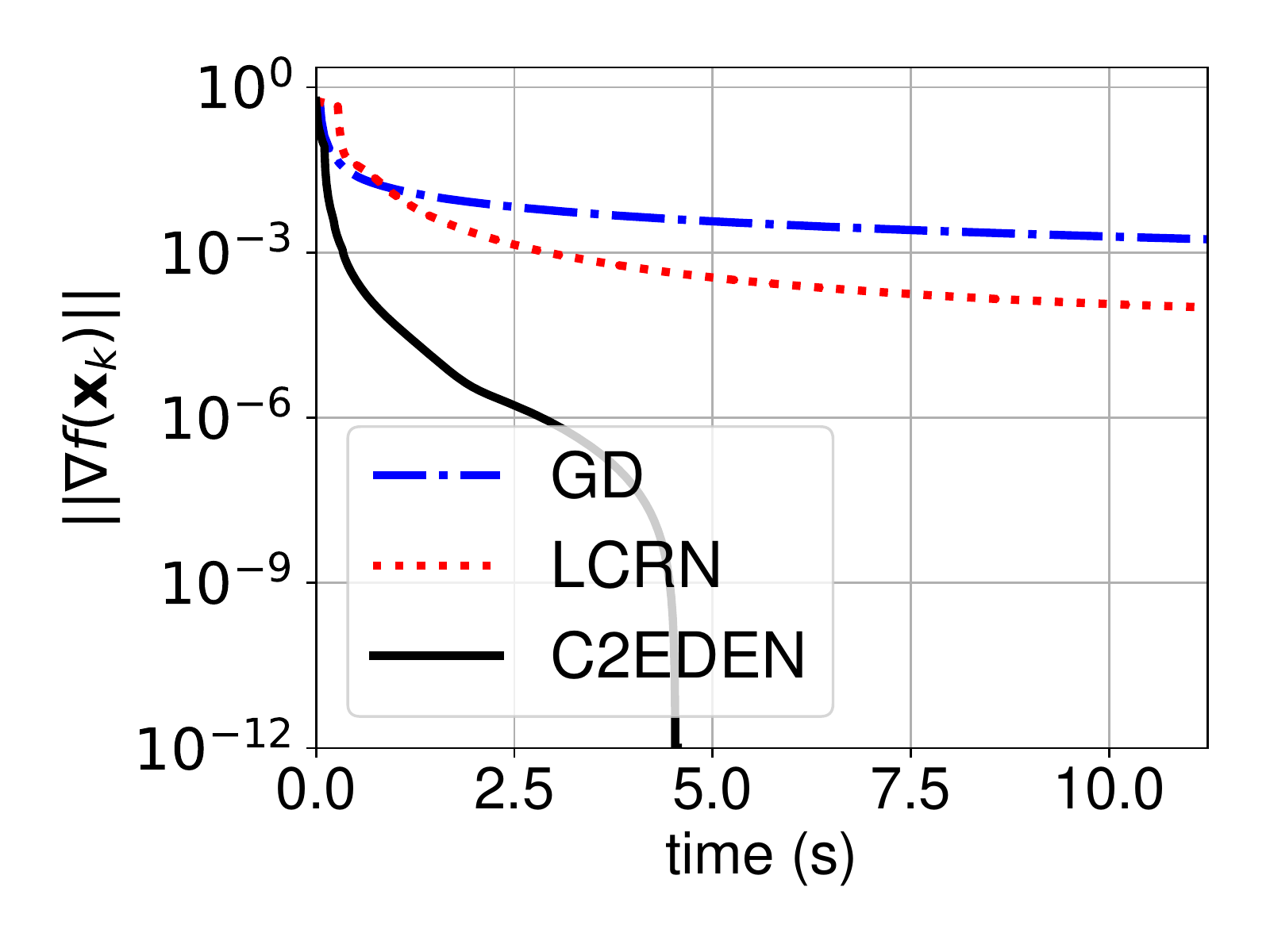}  \\
     (a)  $ \#$communication vs. gap    &
    (b)  time (s) vs. gap    &
    (c) $ \#$communication vs. $ \Vert \nabla f(x_k) \Vert$     &
    (d)   time (s) vs. $ \Vert \nabla f(x_k) \Vert$
    \end{tabular}
    \caption{The results of the model of nonconvex regularized logistic regression on mushrooms ($n$=32).}
    \label{fig:nc-mushrooms}
\end{figure*}

\begin{figure*}[ht]
    \centering
    \begin{tabular}{cccc}
    \includegraphics[scale=0.22]{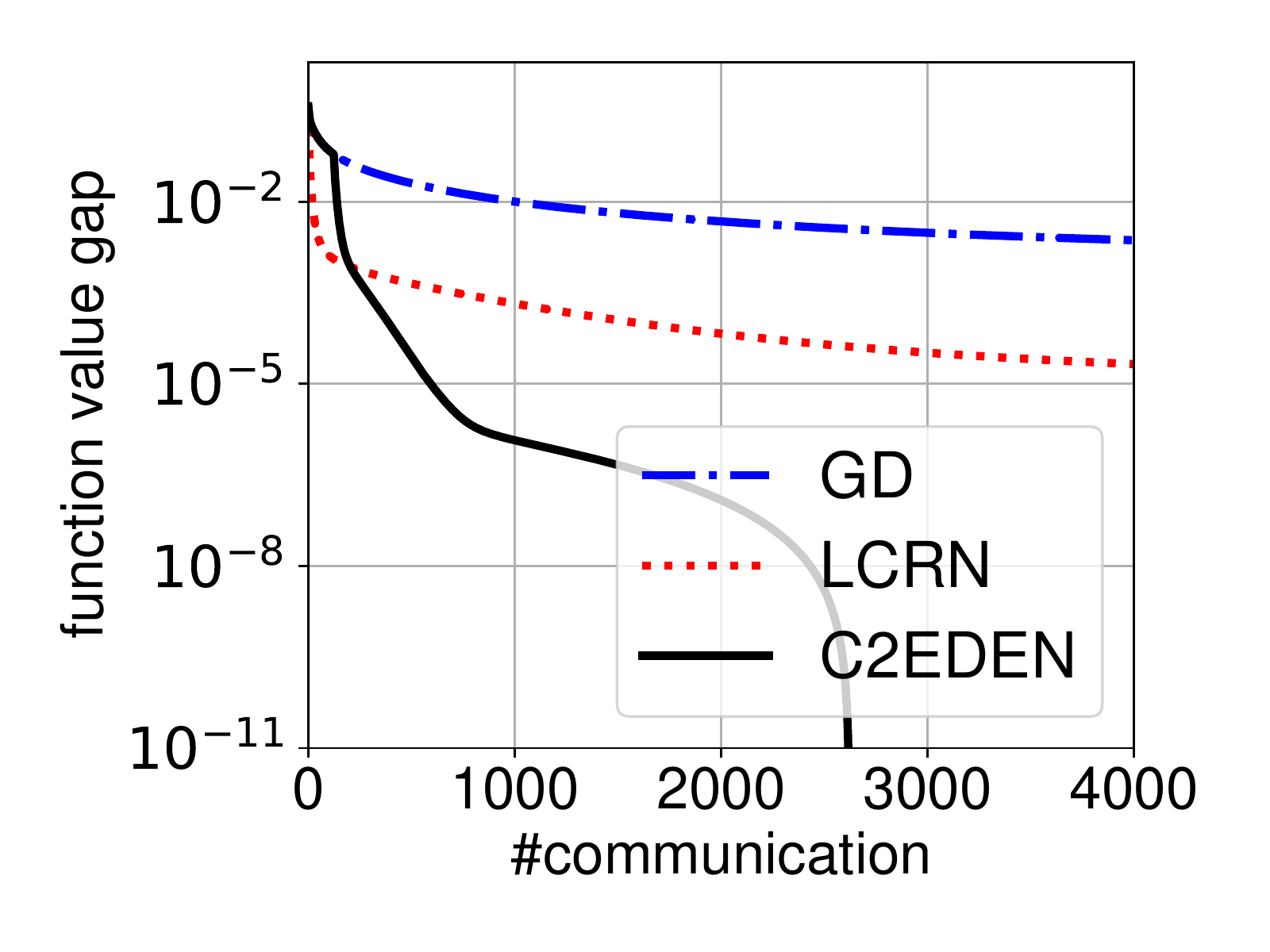}     &  \includegraphics[scale=0.22]{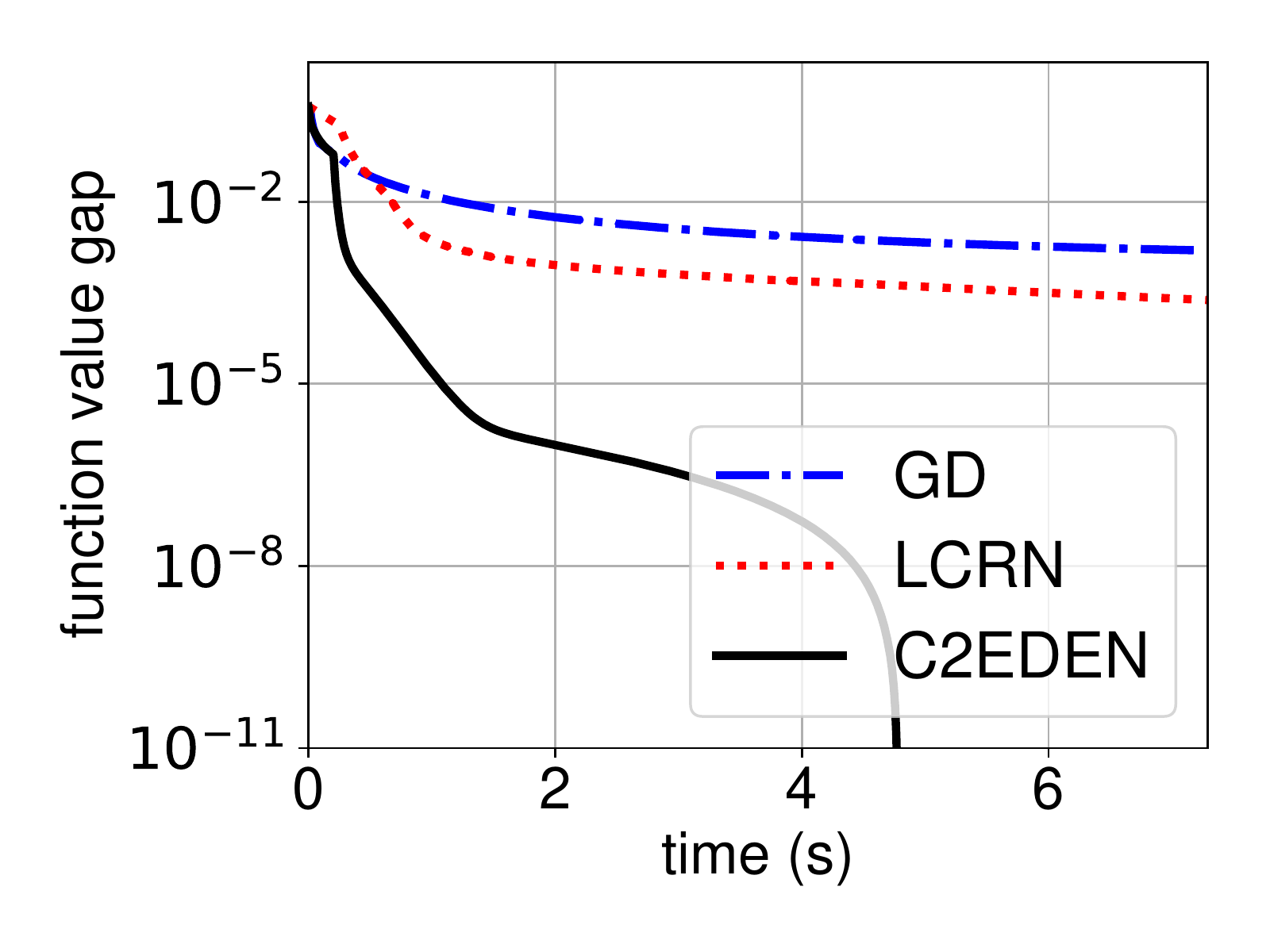} &
    \includegraphics[scale=0.22]{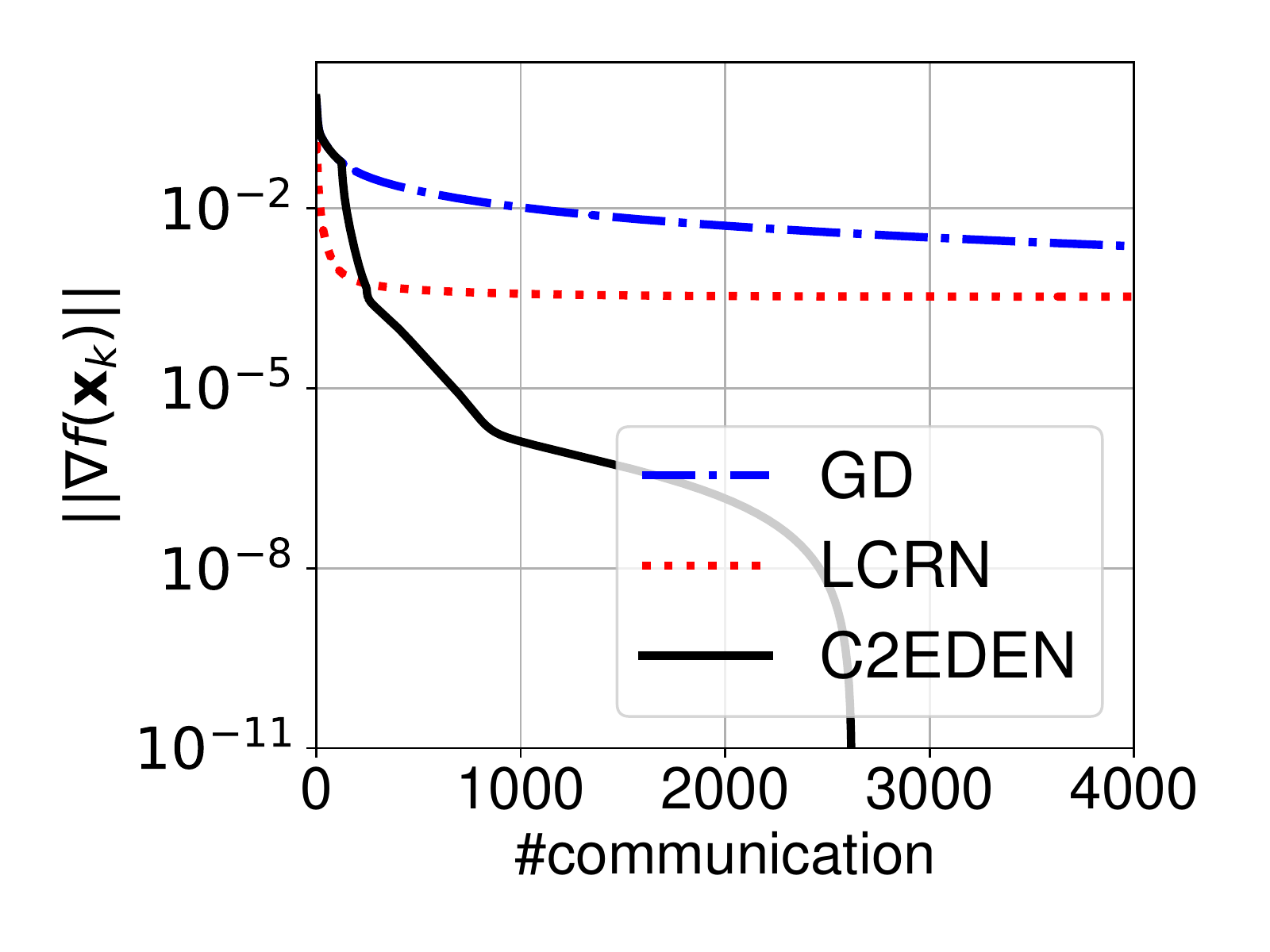} &
    \includegraphics[scale=0.22]{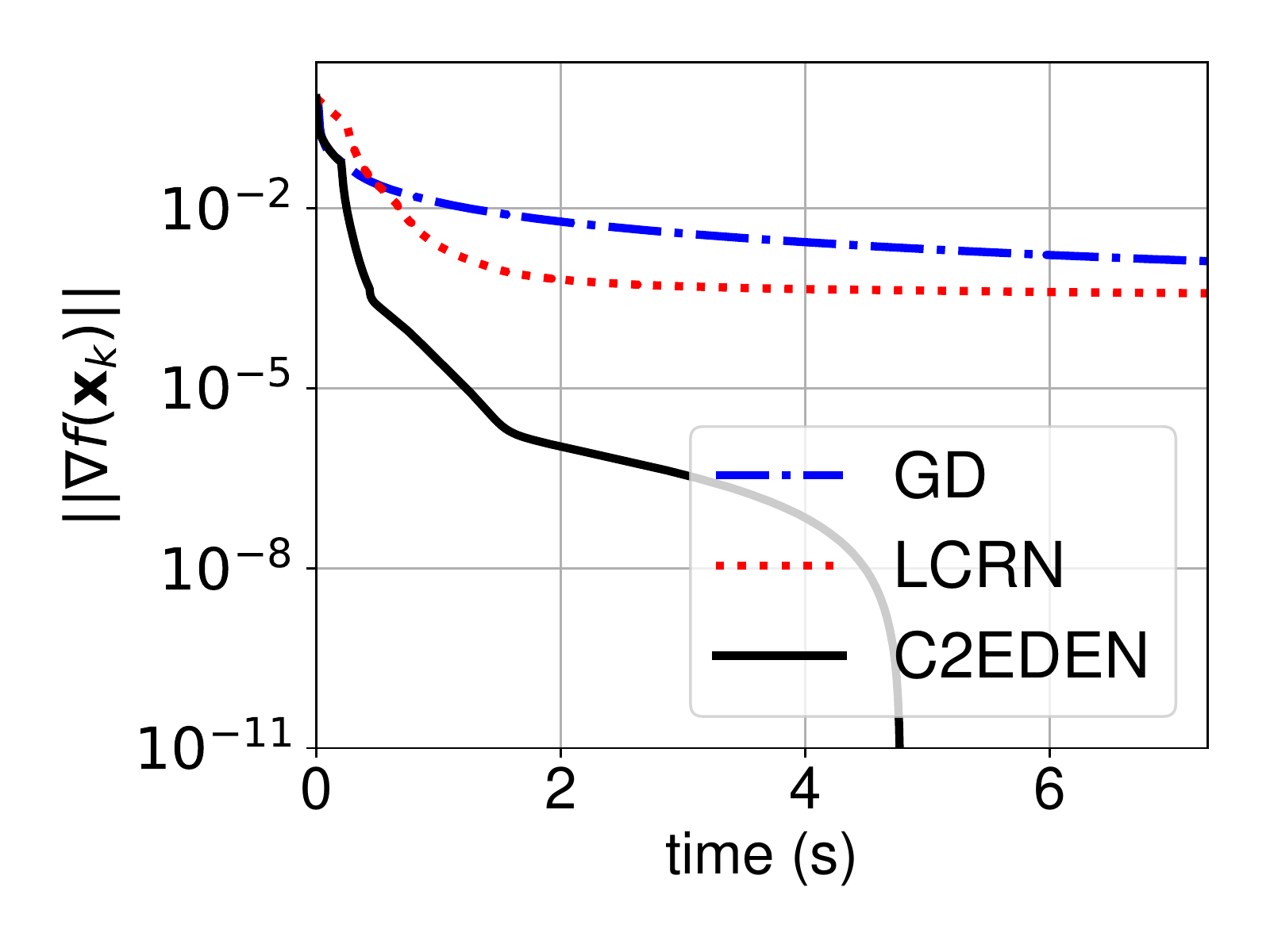}  \\
    (a)   $ \#$communication vs. gap    &
    (b)   time (s) vs. gap      &
    (c) $ \#$communication vs. $ \Vert \nabla f(x_k) \Vert$    &
    (d)  time (s) vs. $ \Vert \nabla f(x_k) \Vert$
    \end{tabular}
    \caption{The results of the model of nonconvex regularized logistic regression on a9a ($n$=16).}
    \label{fig:nc-a9a16}
\end{figure*}

\begin{figure*}[ht]
    \centering
    \begin{tabular}{cccc}
    \includegraphics[scale=0.22]{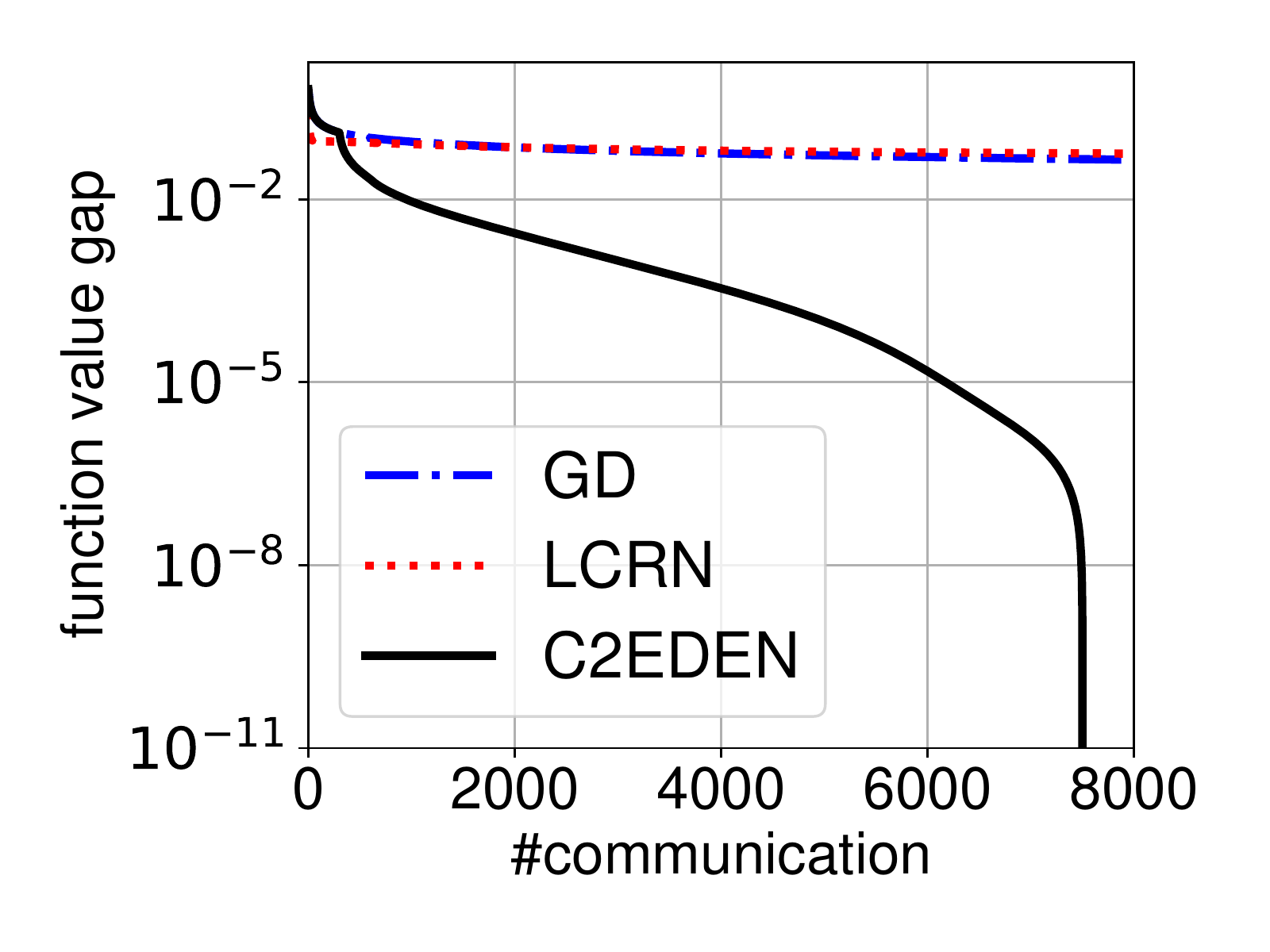}     &  \includegraphics[scale=0.22]{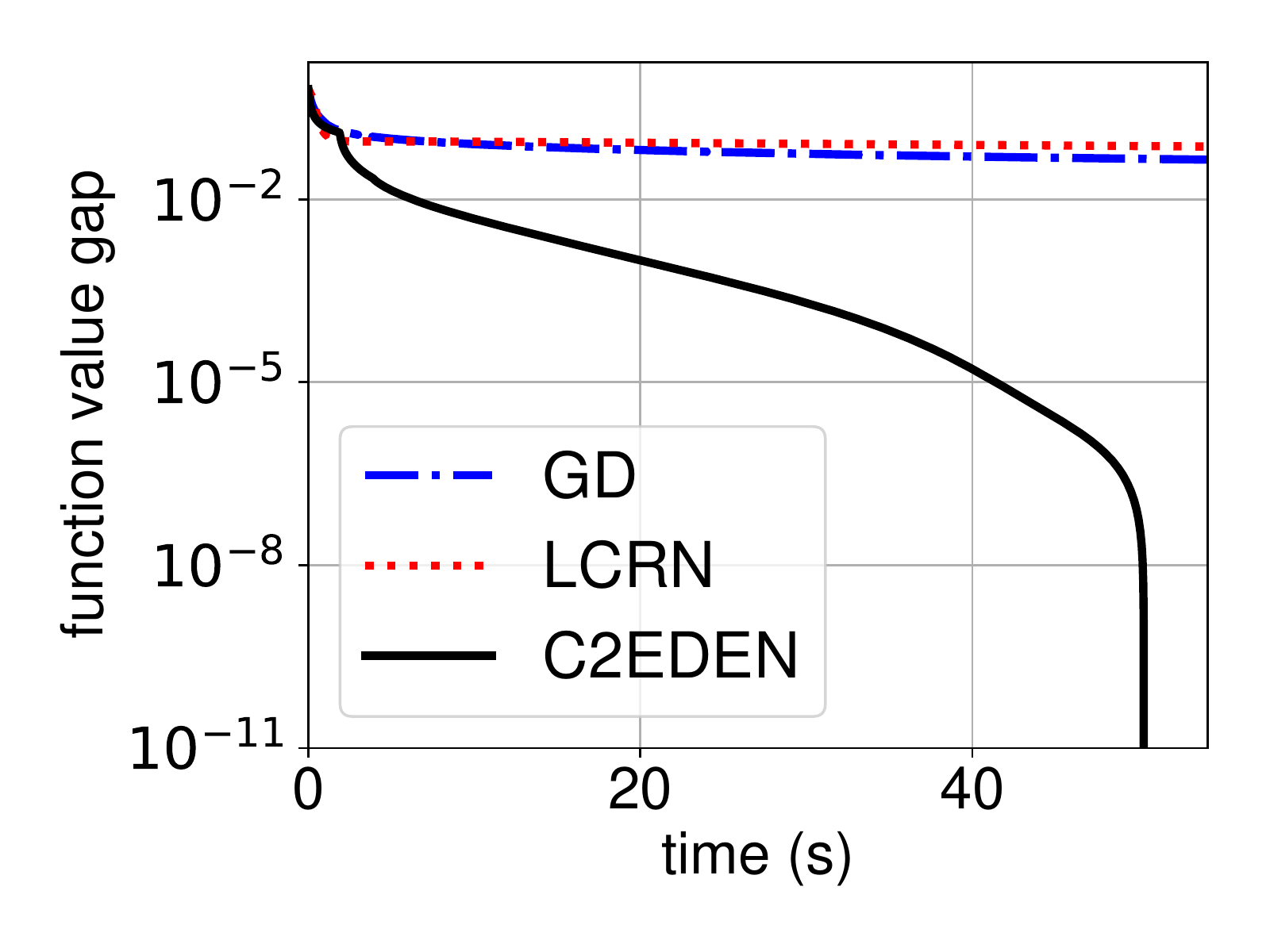} &
    \includegraphics[scale=0.22]{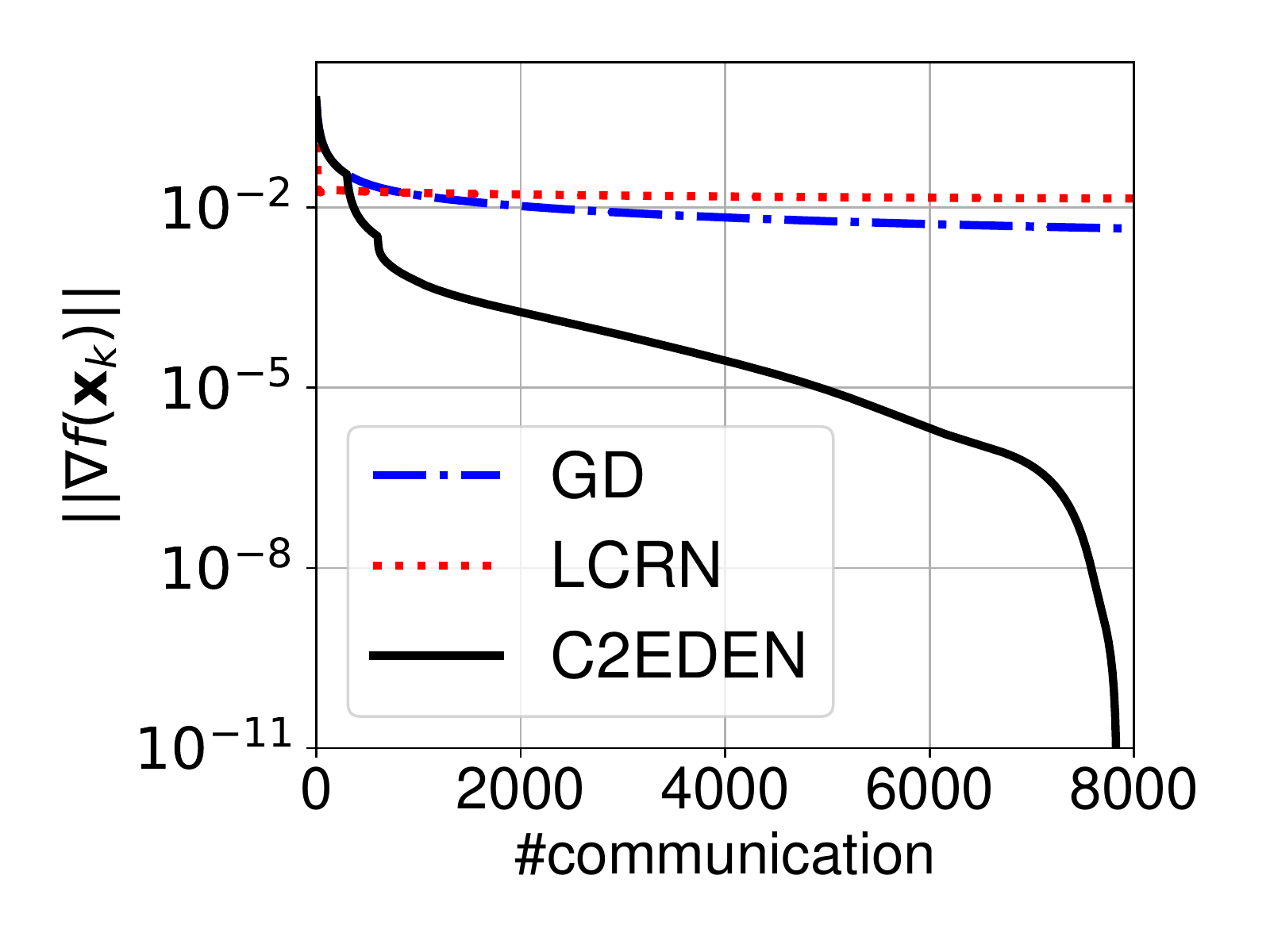}  &
    \includegraphics[scale=0.22]{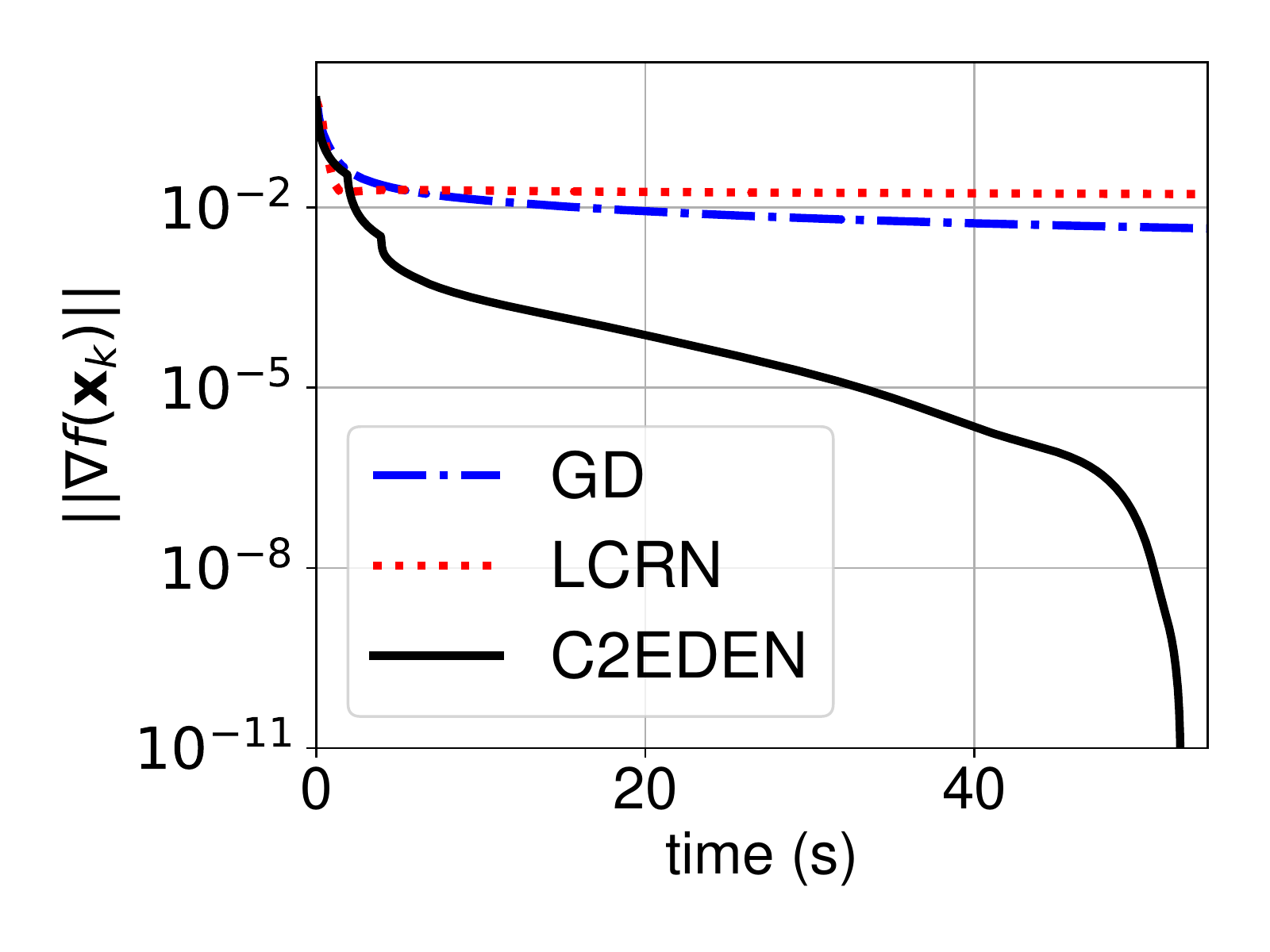}  \\
     (a)   $ \#$communication vs. gap     &
    (b)  time (s) vs. gap   &
    (c)$ \#$communication vs. $ \Vert \nabla f(x_k) \Vert$     &
    (d)  time (s) vs. $ \Vert \nabla f(x_k) \Vert$
    \end{tabular}
    \caption{The results of the model of nonconvex regularized logistic regression on w8a ($n$=16).}
        \label{fig:nc-w8a16}
\end{figure*}

\begin{figure*}[ht]
    \centering
    \begin{tabular}{cccc}
    \includegraphics[scale=0.22]{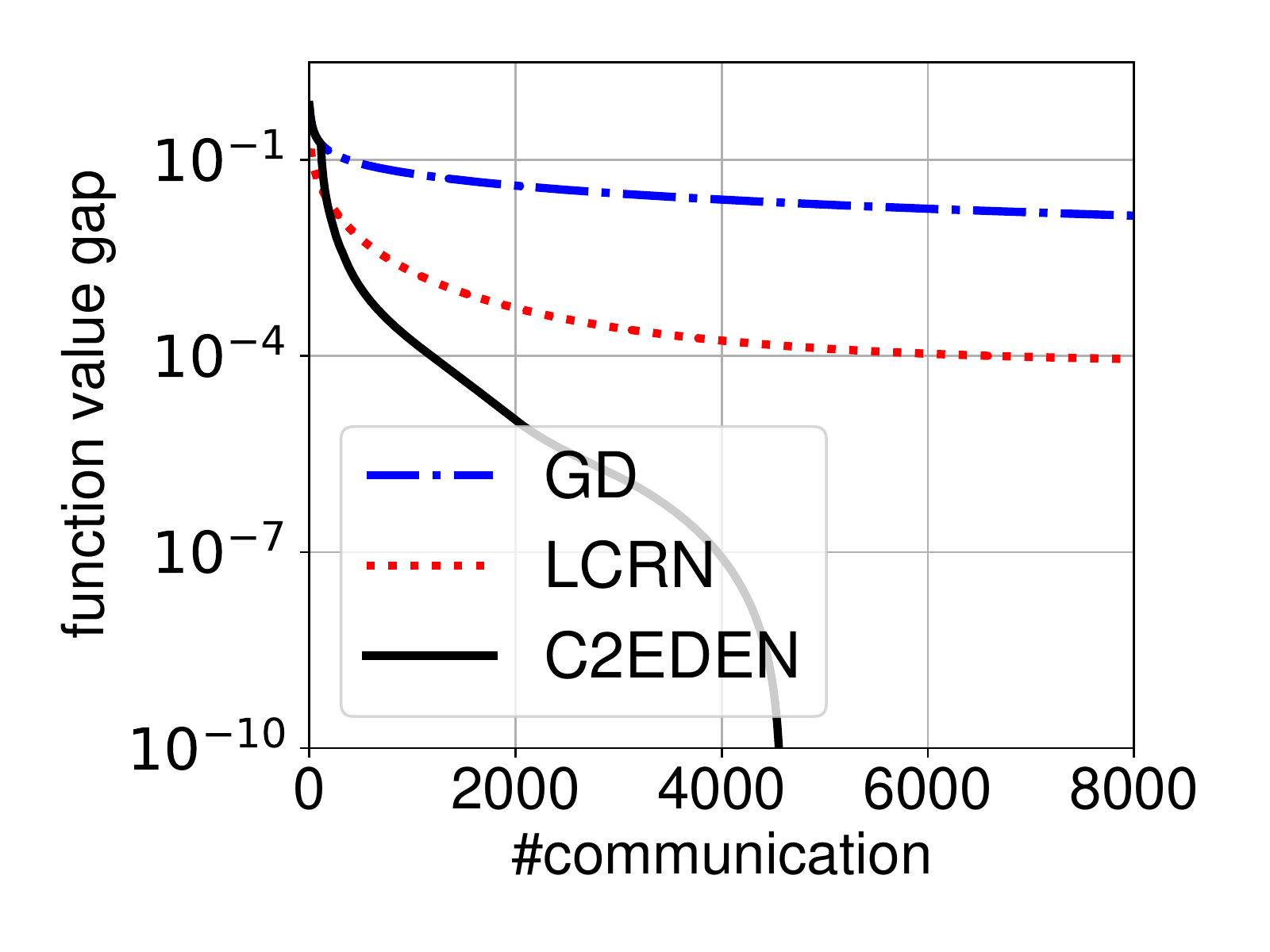}     &  \includegraphics[scale=0.22]{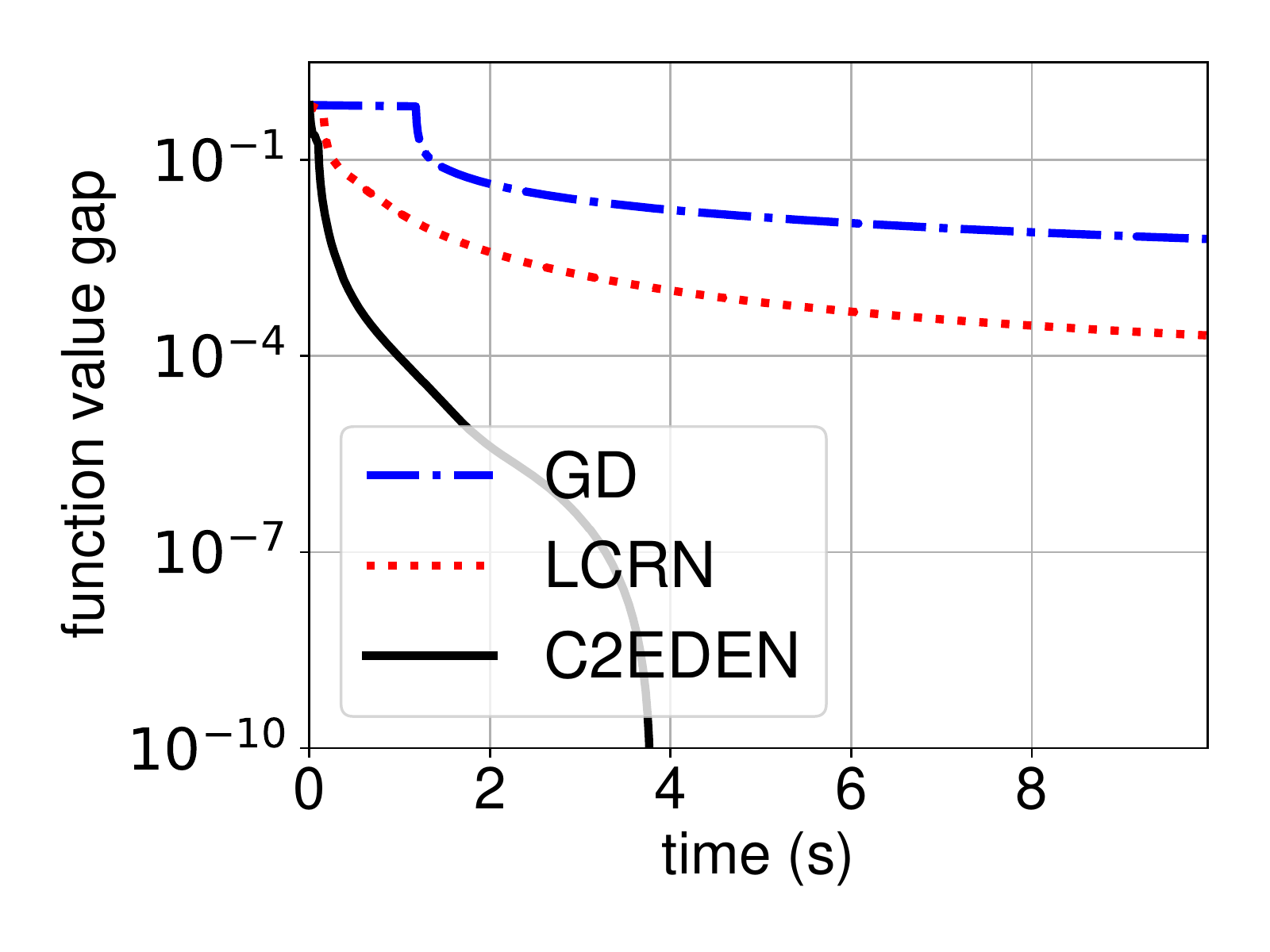} &
    \includegraphics[scale=0.22]{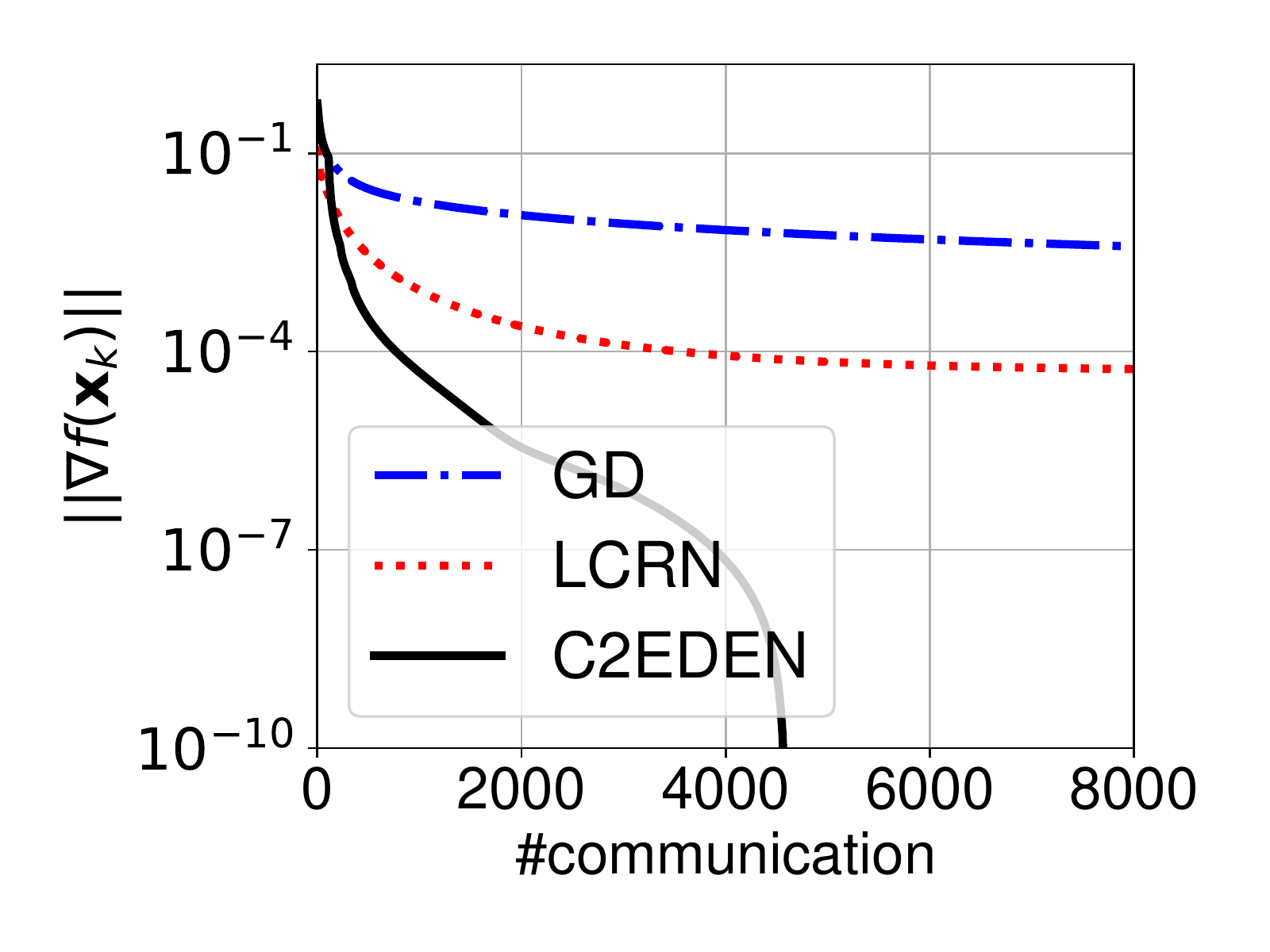}  &
    \includegraphics[scale=0.22]{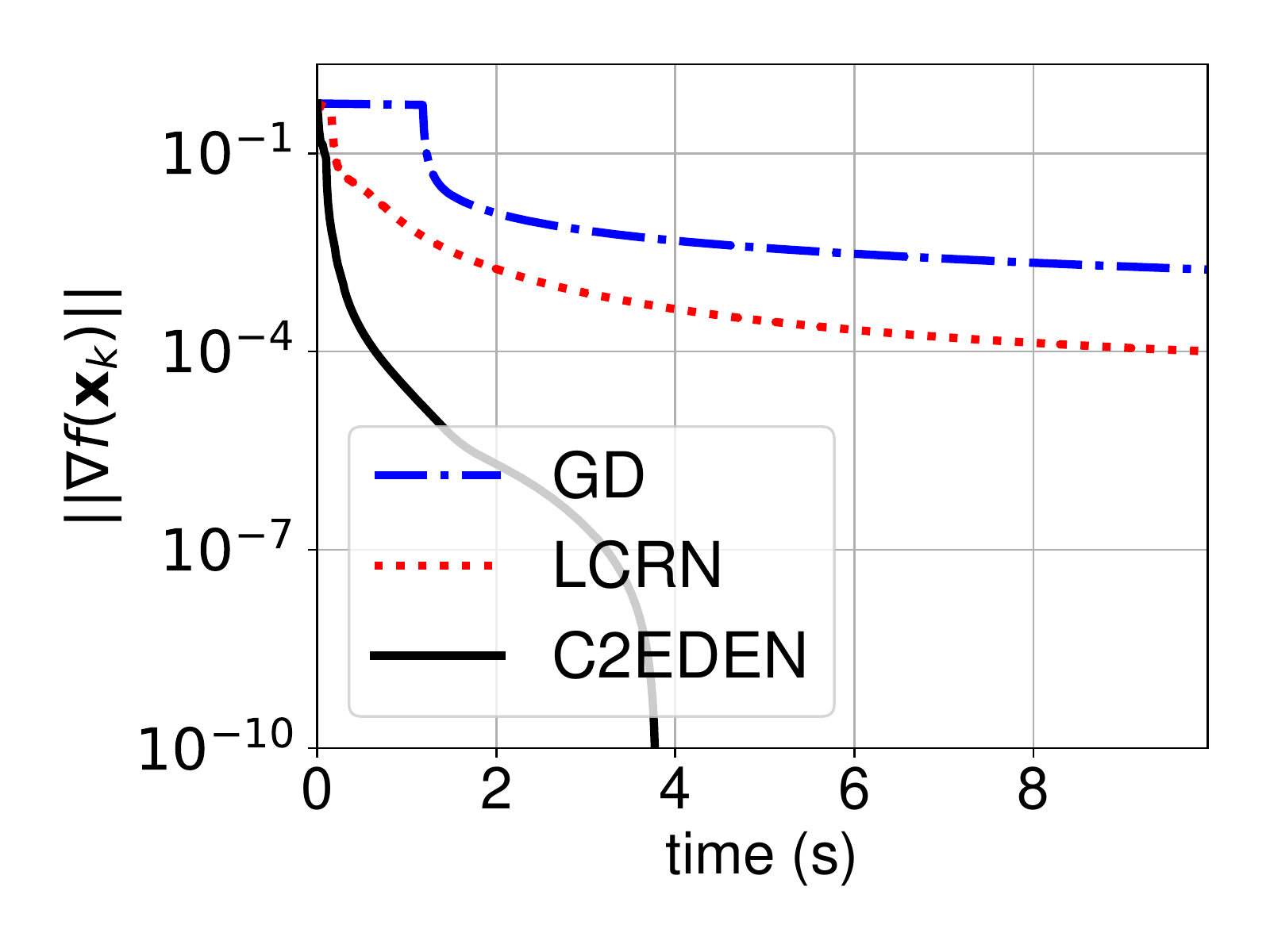}  \\
     (a)  $ \#$communication vs. gap     &
    (b)  time (s) vs. gap      &
    (c) $ \#$communication vs. $ \Vert \nabla f(x_k) \Vert$    &
    (d)   time (s) vs. $ \Vert \nabla f(x_k) \Vert$
    \end{tabular}
    \caption{The results of the model of nonconvex regularized logistic regression on mushrooms ($n$=16).}
        \label{fig:nc-mushrooms16}
\end{figure*}

\begin{figure*}[ht]
    \centering
    \begin{tabular}{cccc}
    \includegraphics[scale=0.22]{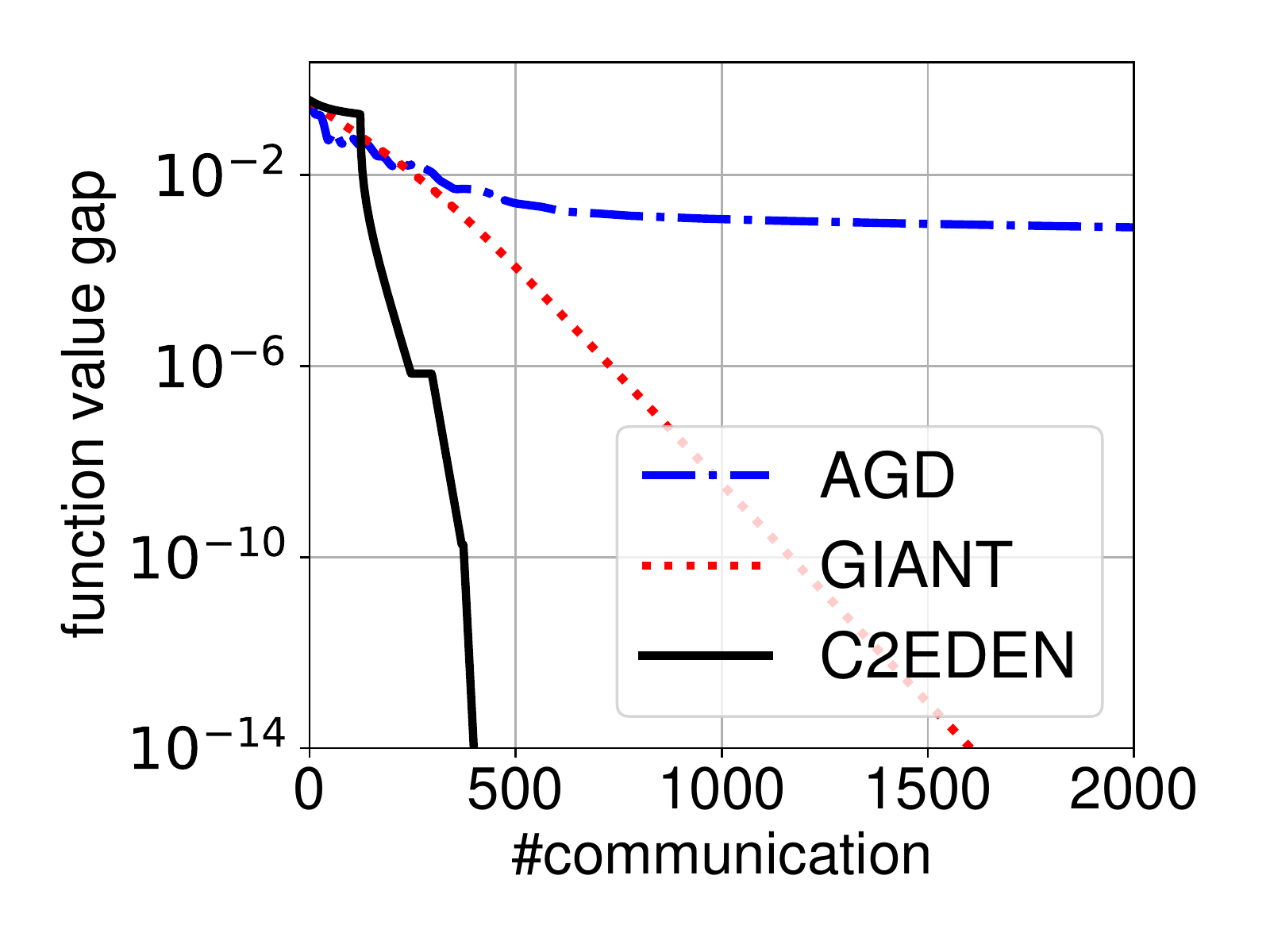}     &  \includegraphics[scale=0.22]{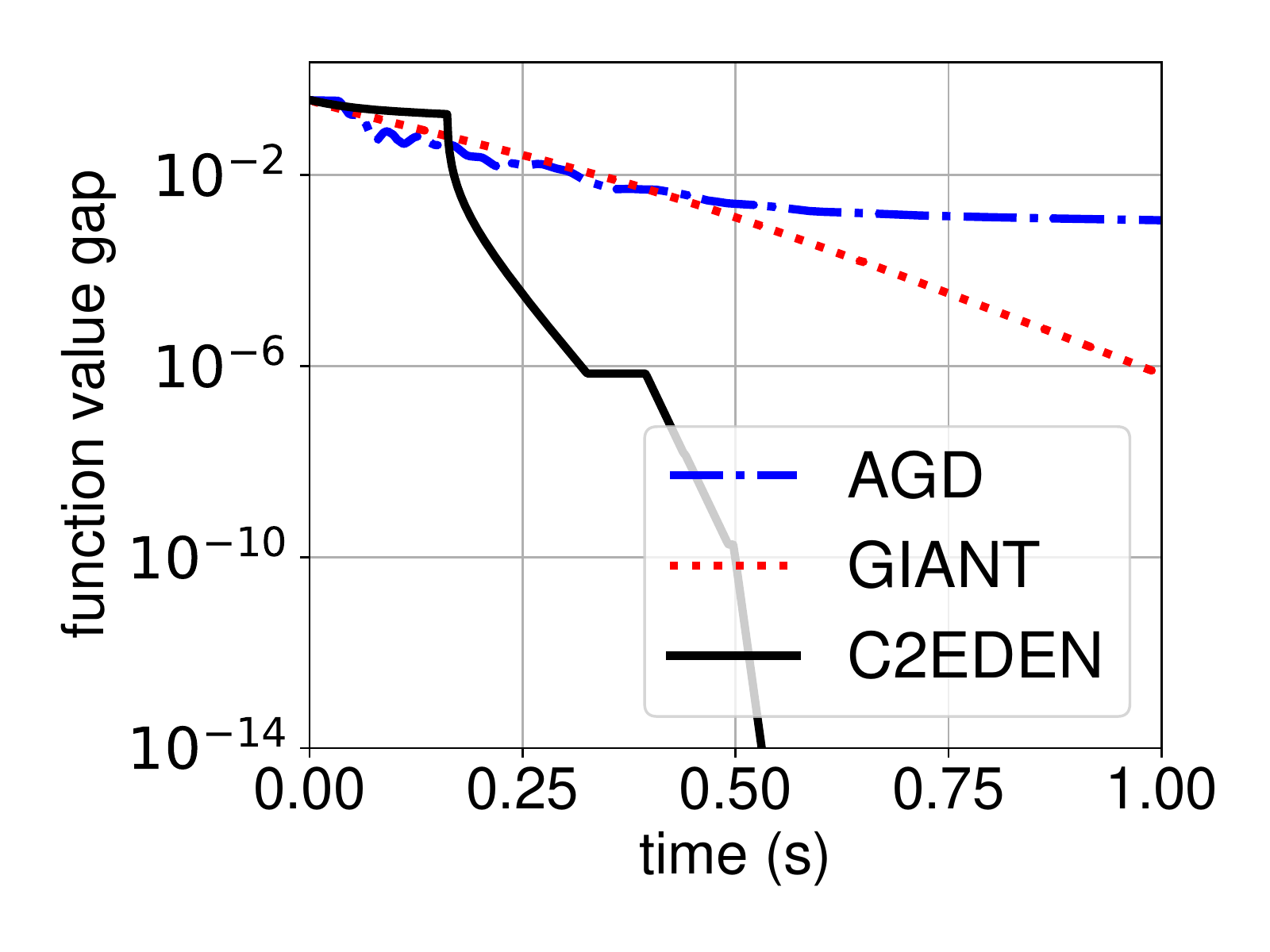} &
    \includegraphics[scale=0.22]{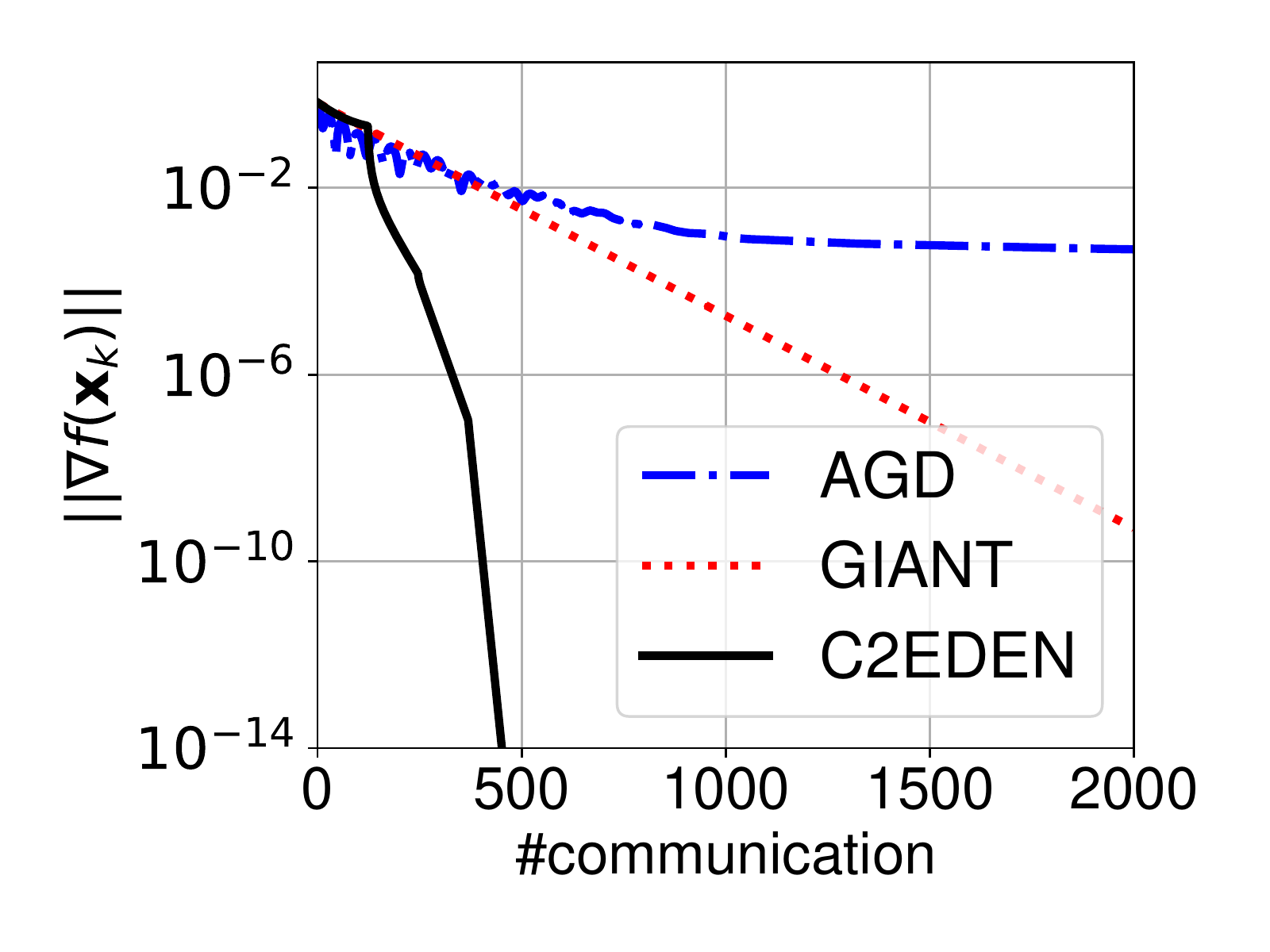} &
    \includegraphics[scale=0.22]{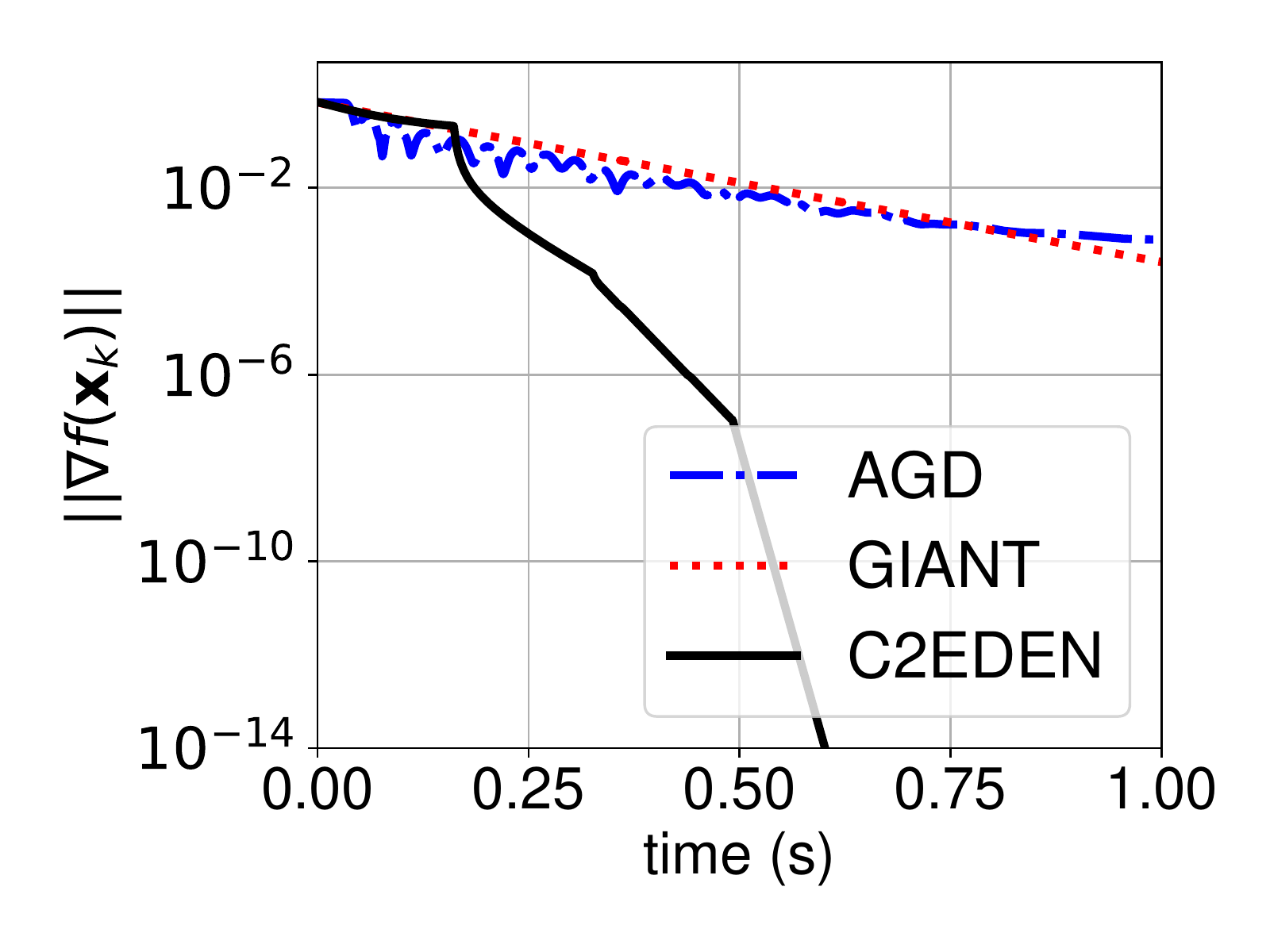}  \\
    (a) $\#$communication vs. gap     &
    (b)   time (s) vs. gap     &
    (c) $\#$communication vs. $ \Vert \nabla f(x_k) \Vert$     &
    (d)   time (s) vs. $ \Vert \nabla f(x_k) \Vert$
    \end{tabular}
    \caption{The results of the model of $\ell_2$-regularized logistic regression on a9a  ($n$=32).}
    \label{fig:sc-a9a}
\end{figure*}

\begin{figure*}[ht]
    \centering
    \begin{tabular}{cccc}
    \includegraphics[scale=0.22]{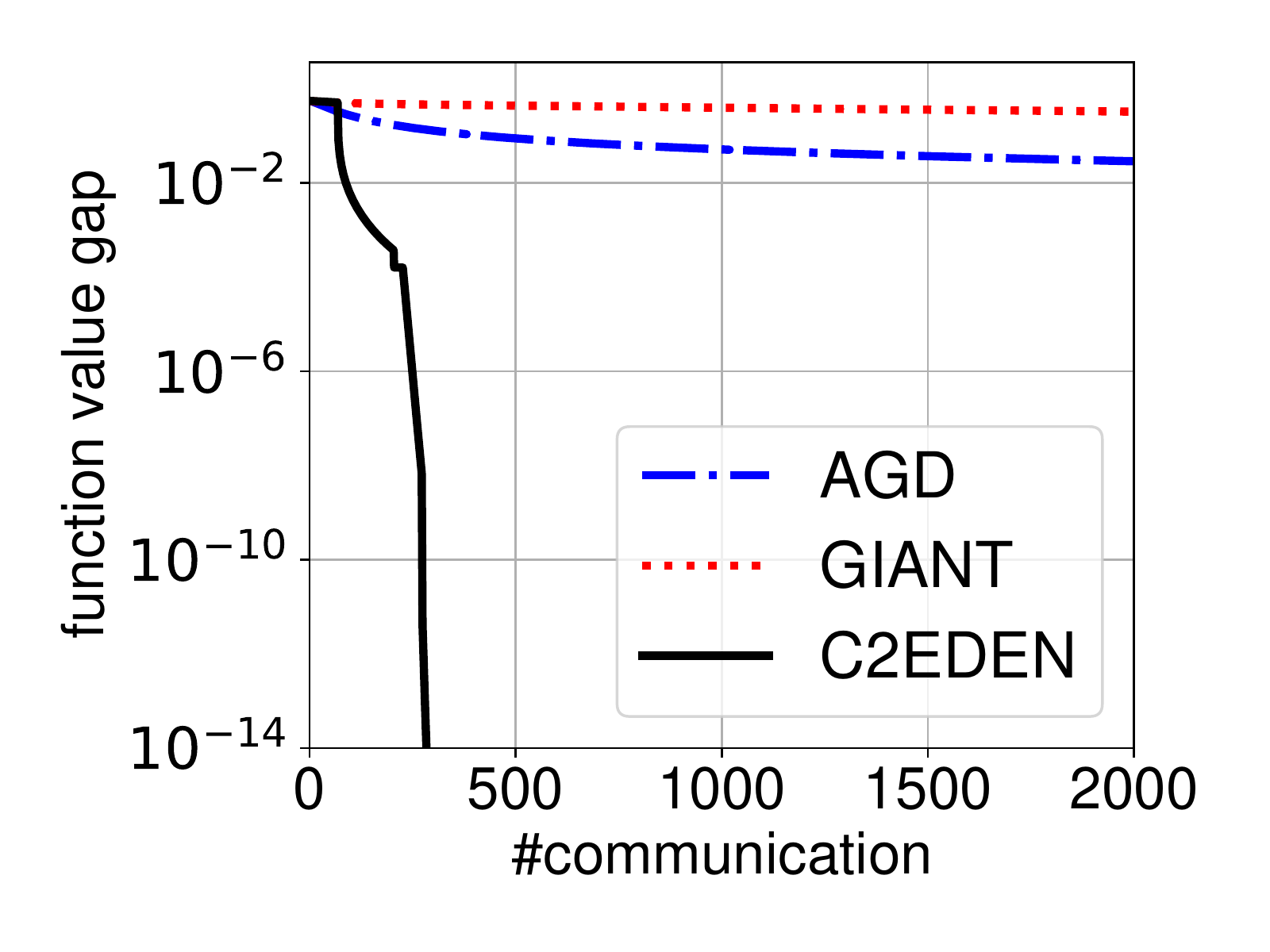}     &  \includegraphics[scale=0.22]{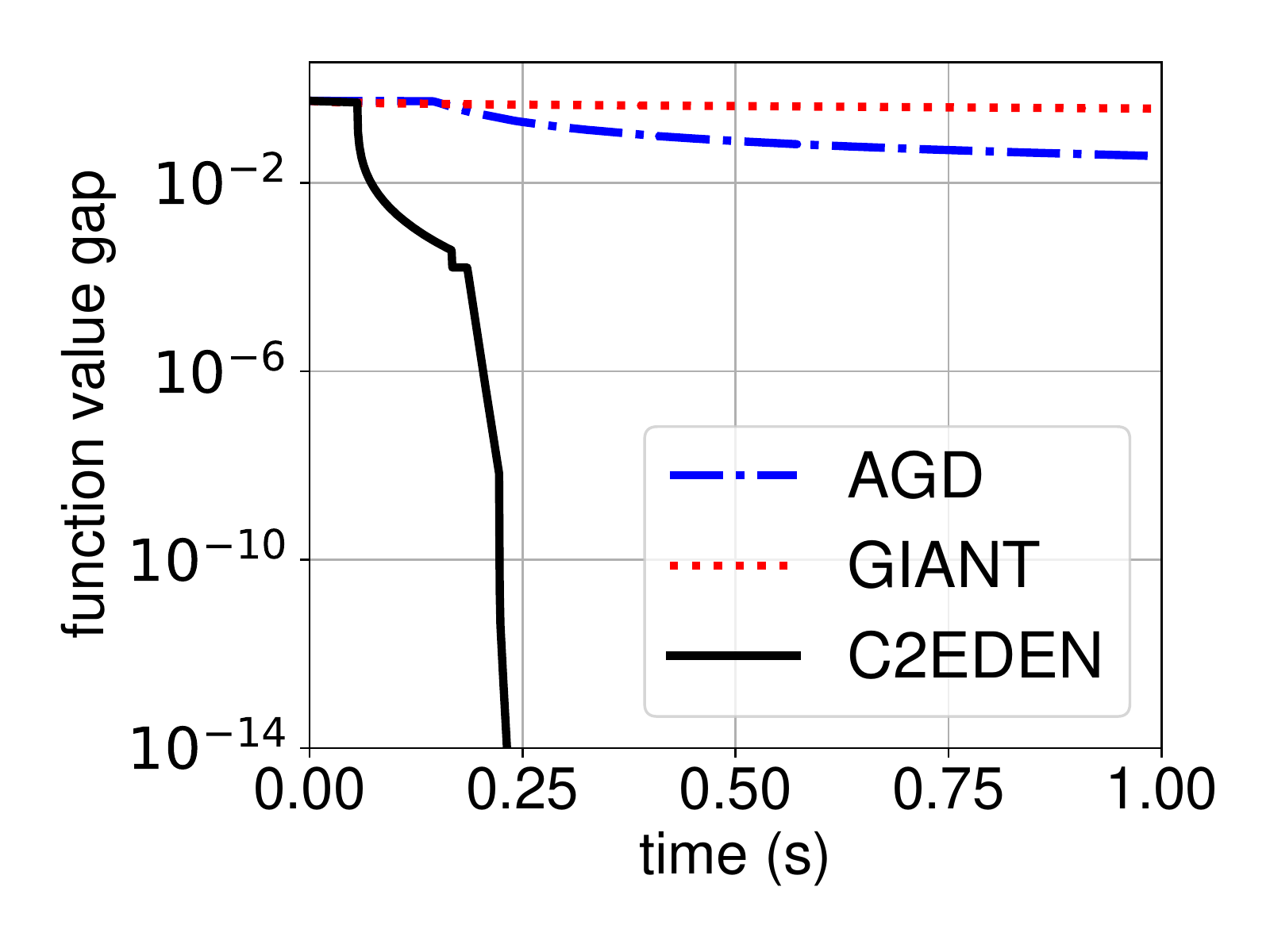} &
    \includegraphics[scale=0.22]{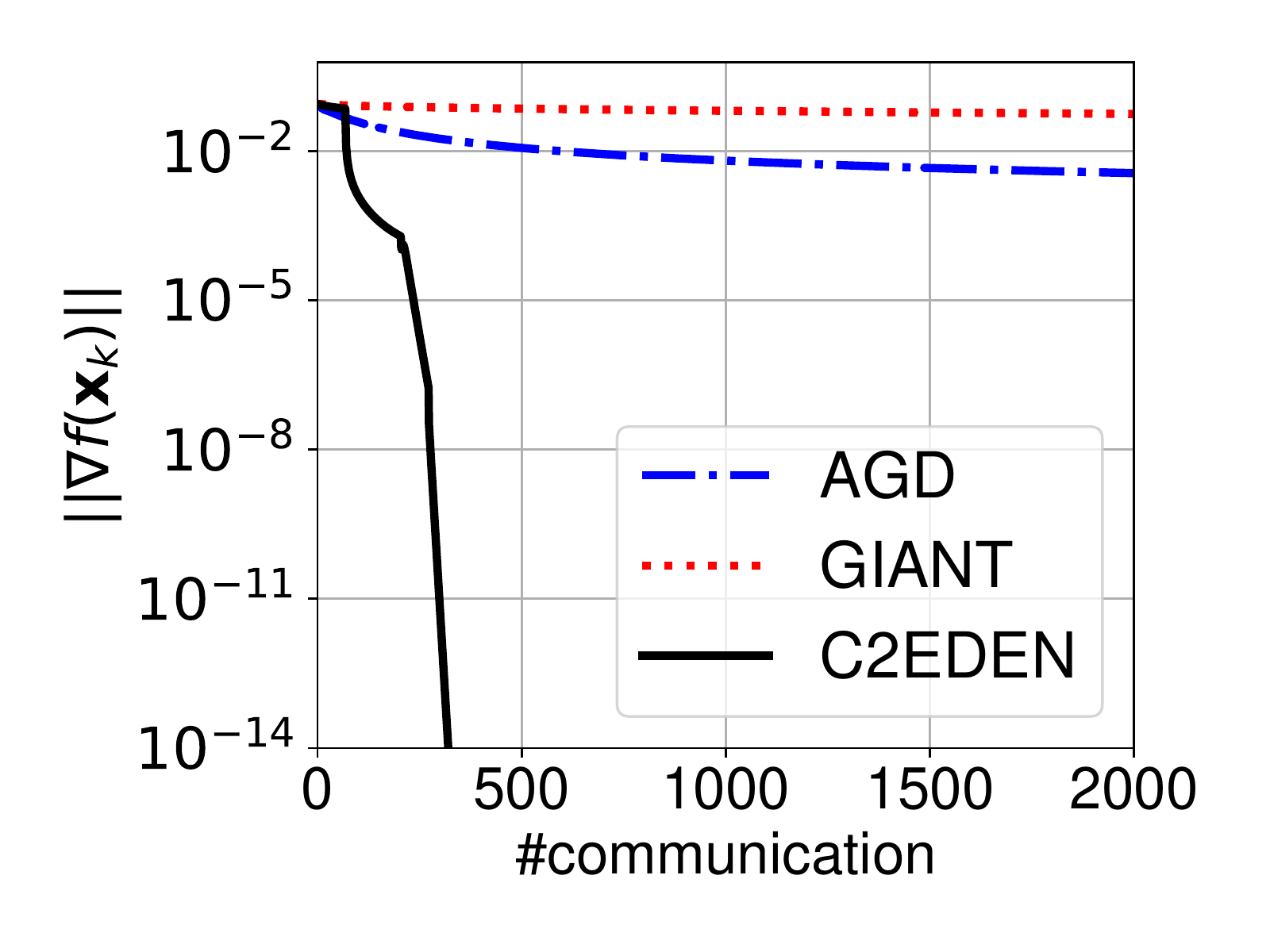} &
    \includegraphics[scale=0.22]{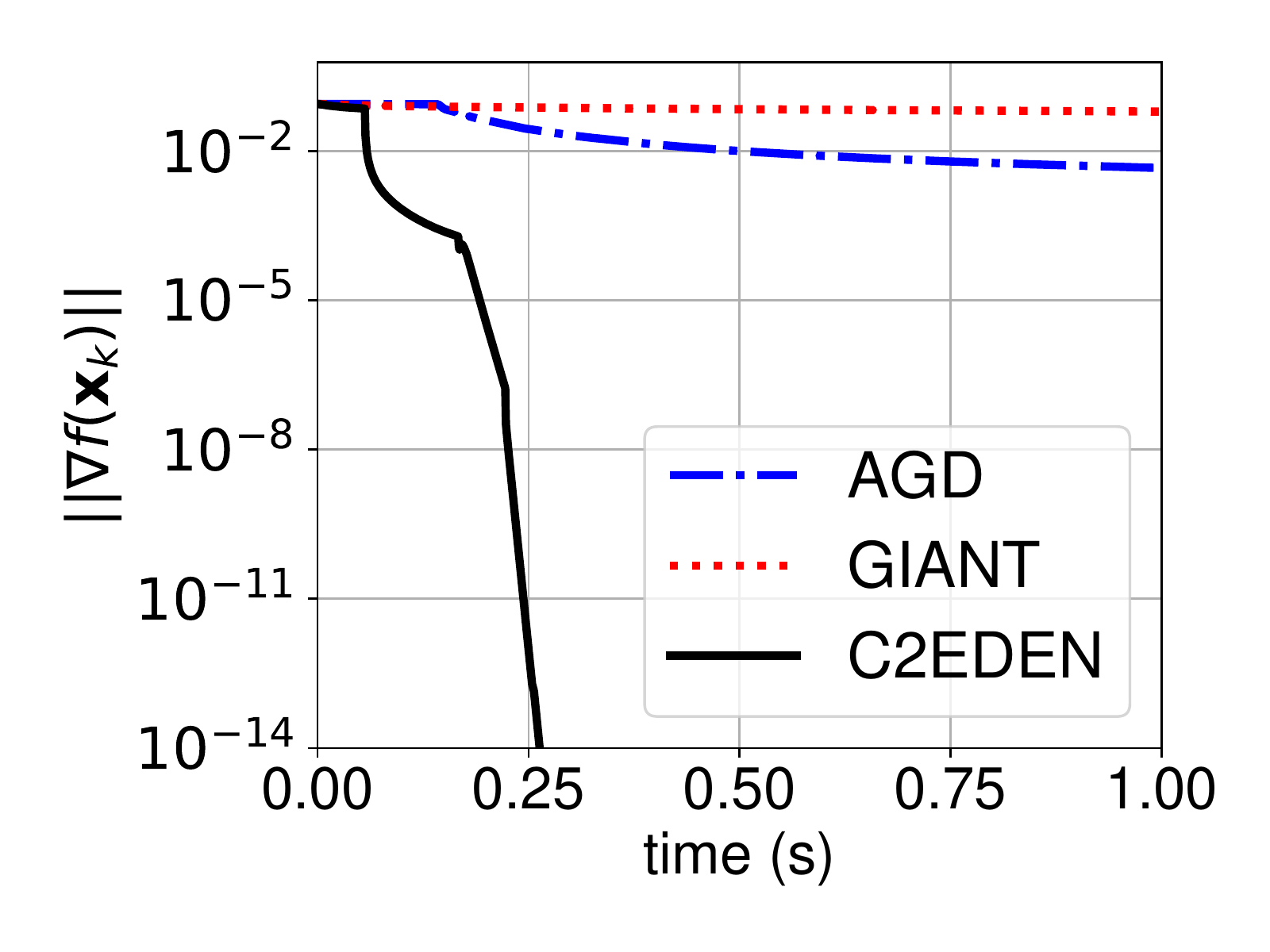}  \\
    (a)   $ \#$communication vs. gap    &
    (b)   time (s) vs. gap      &
    (c) $ \#$communication vs. $ \Vert \nabla f(x_k) \Vert$     &
    (d)   time (s) vs. $ \Vert \nabla f(x_k) \Vert$
    \end{tabular}
    \caption{The results of the model of $\ell_2$-regularized logistic regression on a9a ($n$=32).  }
    \label{fig:sc-phishing}
\end{figure*}

\begin{figure*}[ht]
    \centering
    \begin{tabular}{cccc}
    \includegraphics[scale=0.22]{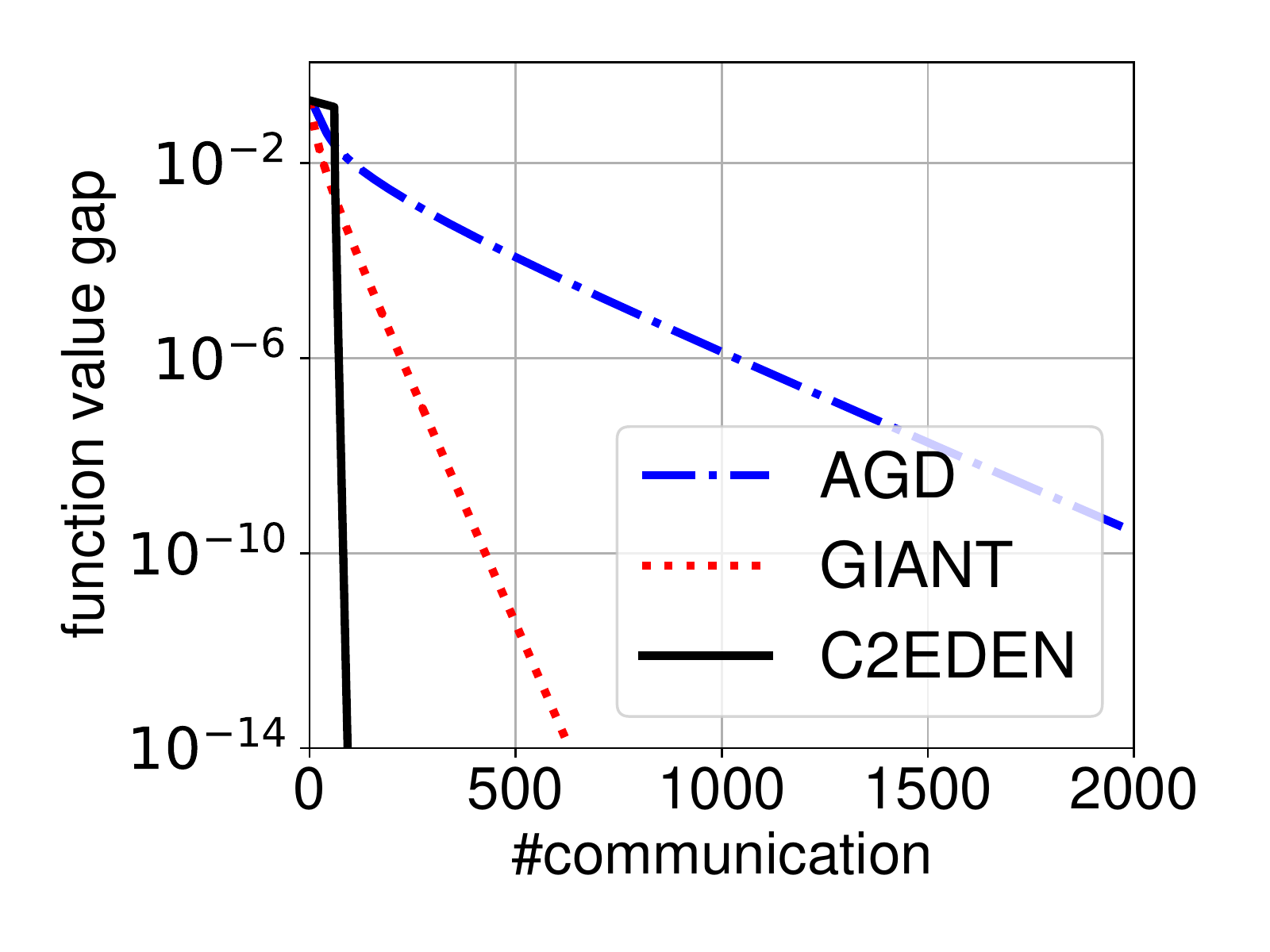}     &  \includegraphics[scale=0.22]{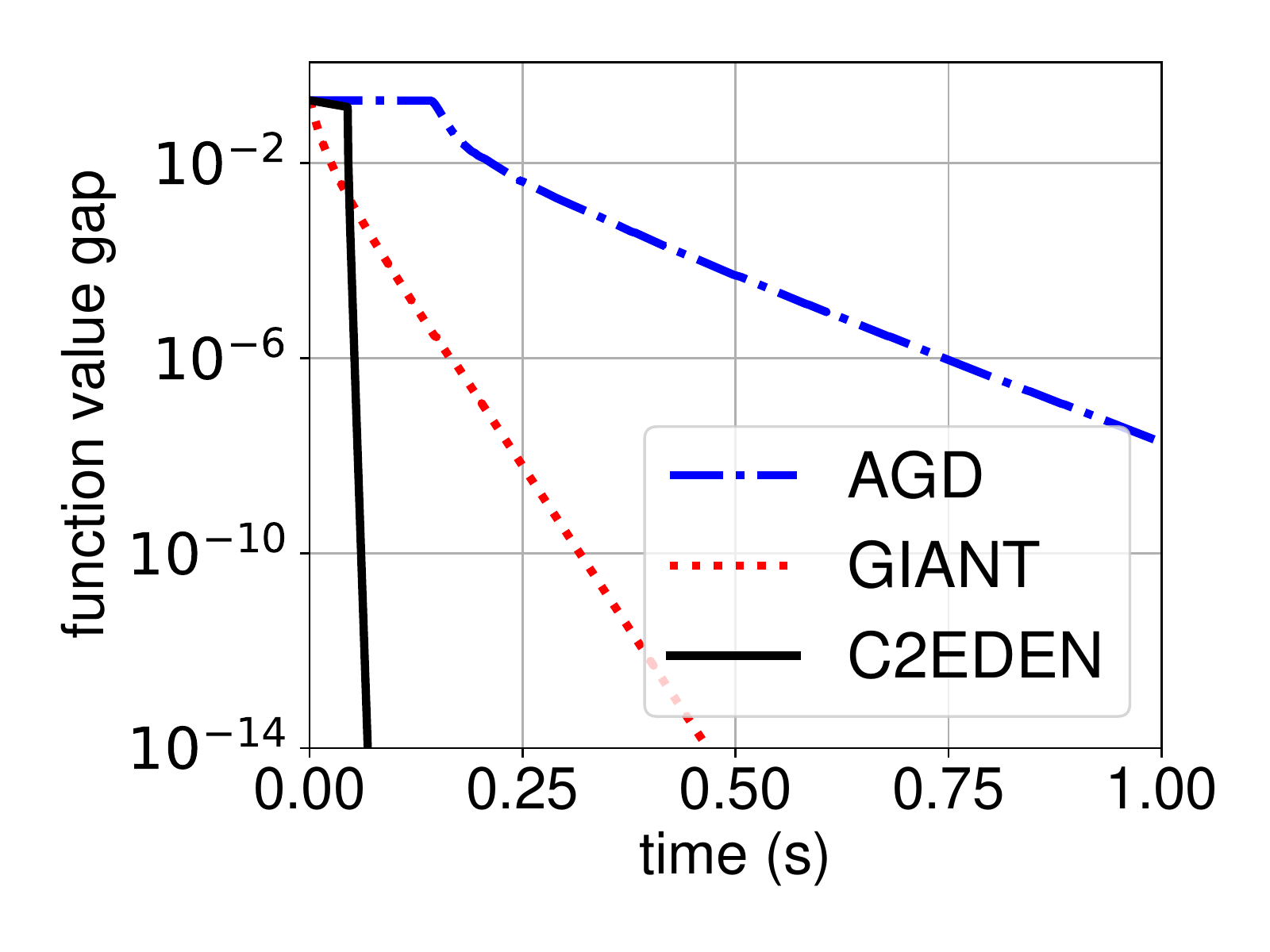} &
    \includegraphics[scale=0.22]{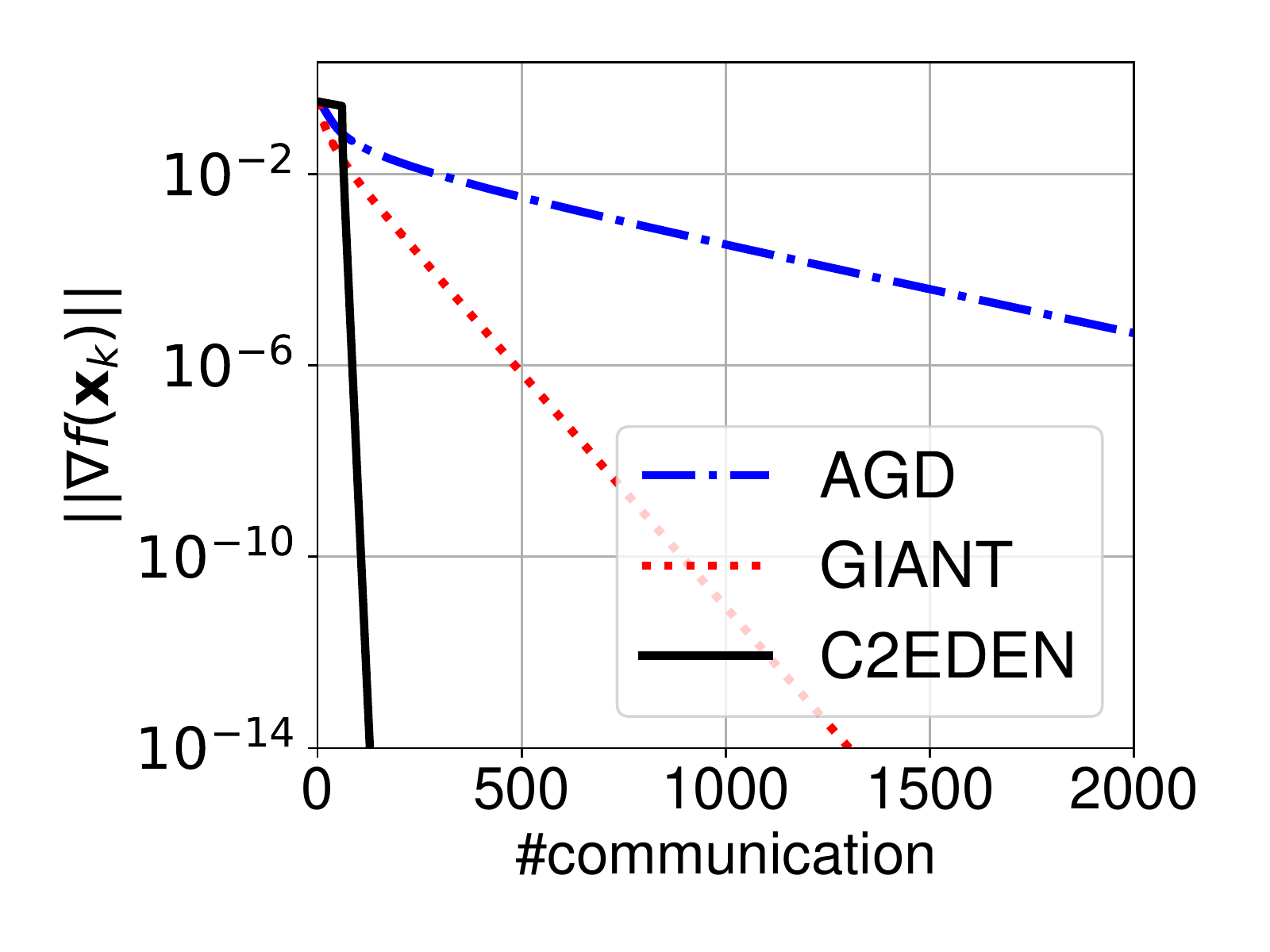} &
    \includegraphics[scale=0.22]{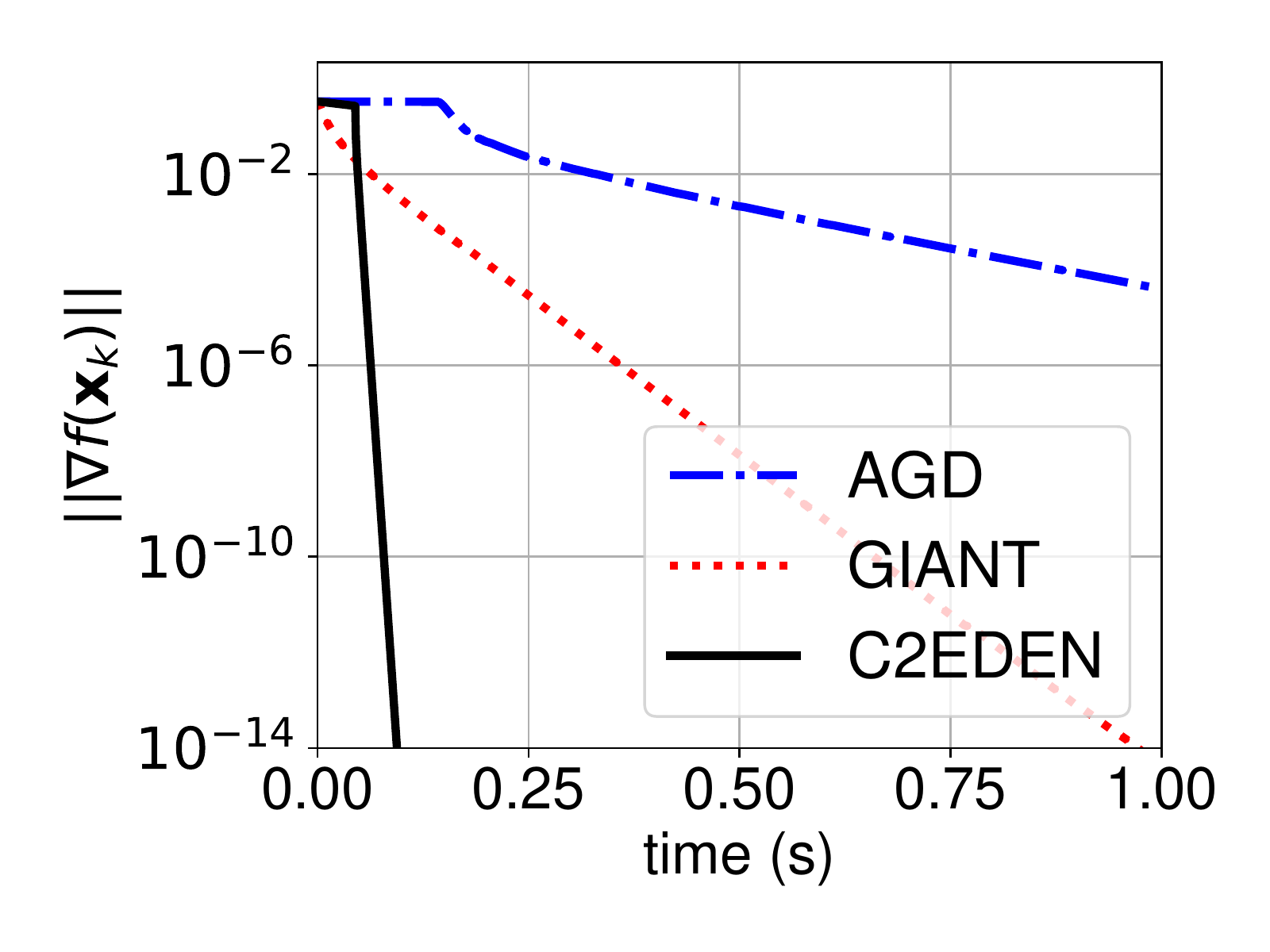}  \\
    (a)   $ \#$communication vs. gap    &
    (b)   time (s) vs. gap     &
    (c) $ \#$communication vs. $ \Vert \nabla f(x_k) \Vert$    &
    (d)   time (s) vs. $ \Vert \nabla f(x_k) \Vert$
    \end{tabular}
    \caption{The results of the model of $\ell_2$-regularized logistic regression on splice  ($n$=32).} 
    \label{fig:sc-splice}
\end{figure*}

\begin{figure*}[ht]
    \centering
    \begin{tabular}{cccc}
    \includegraphics[scale=0.22]{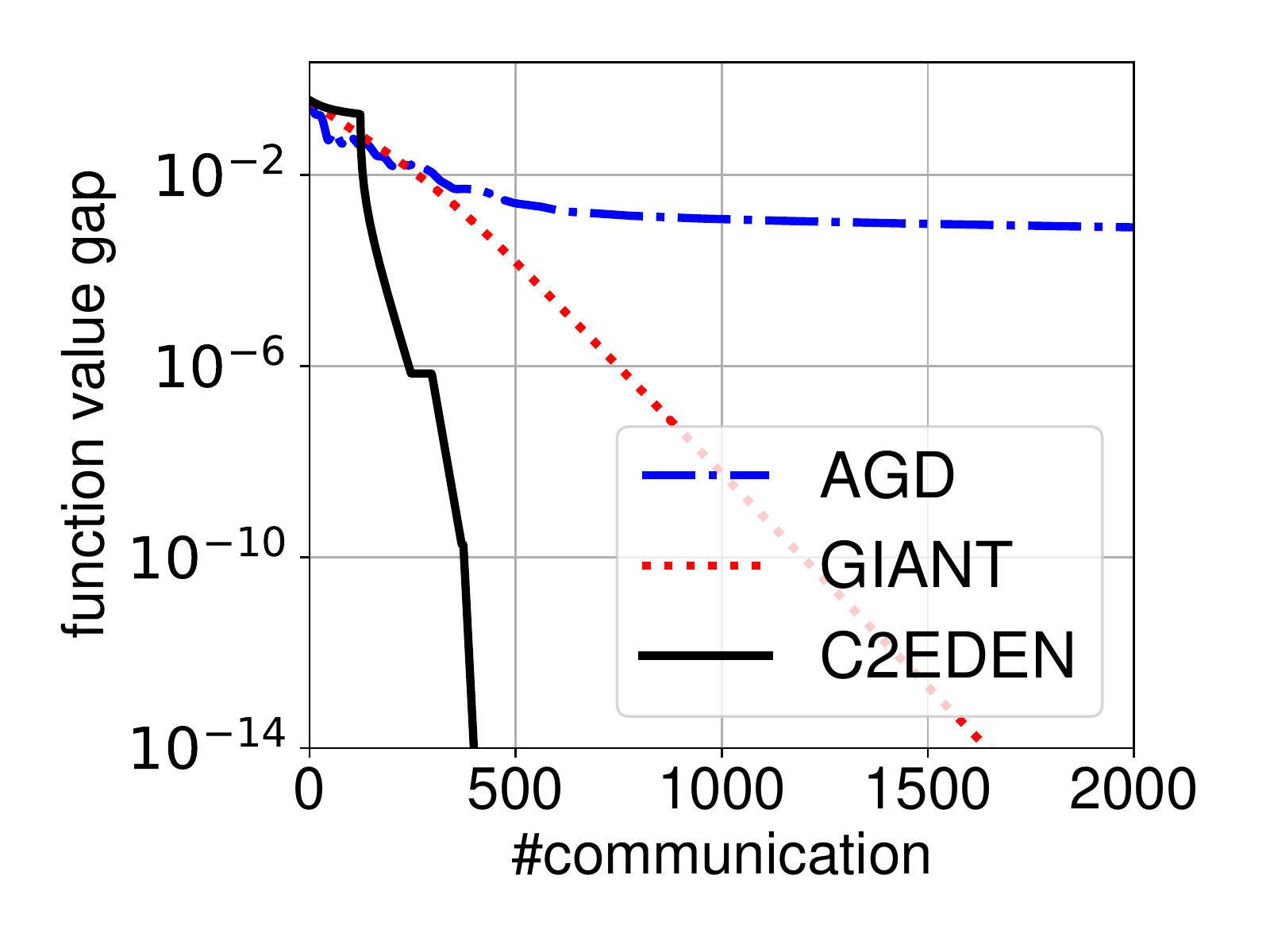}     &  \includegraphics[scale=0.22]{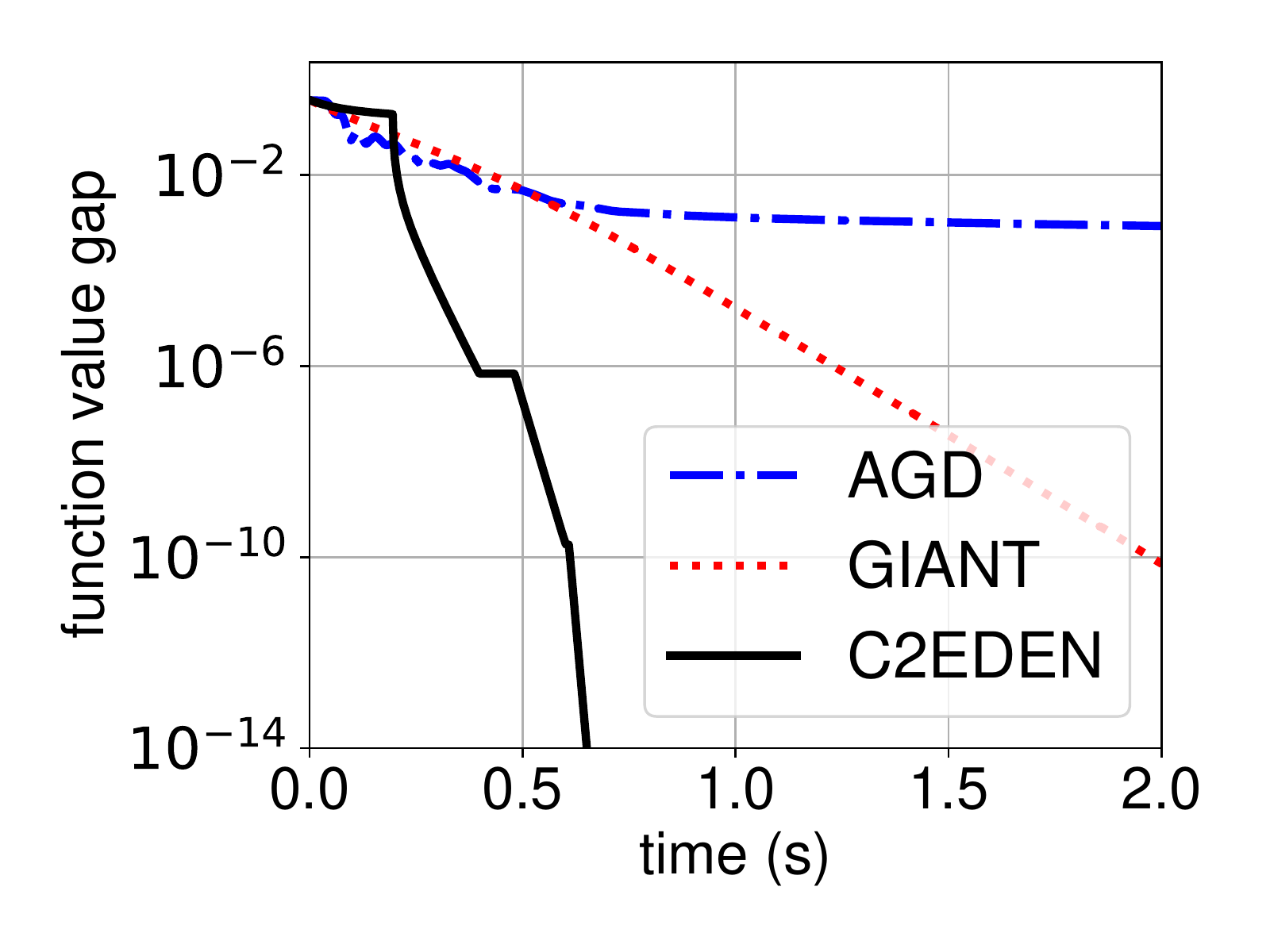} &
    \includegraphics[scale=0.22]{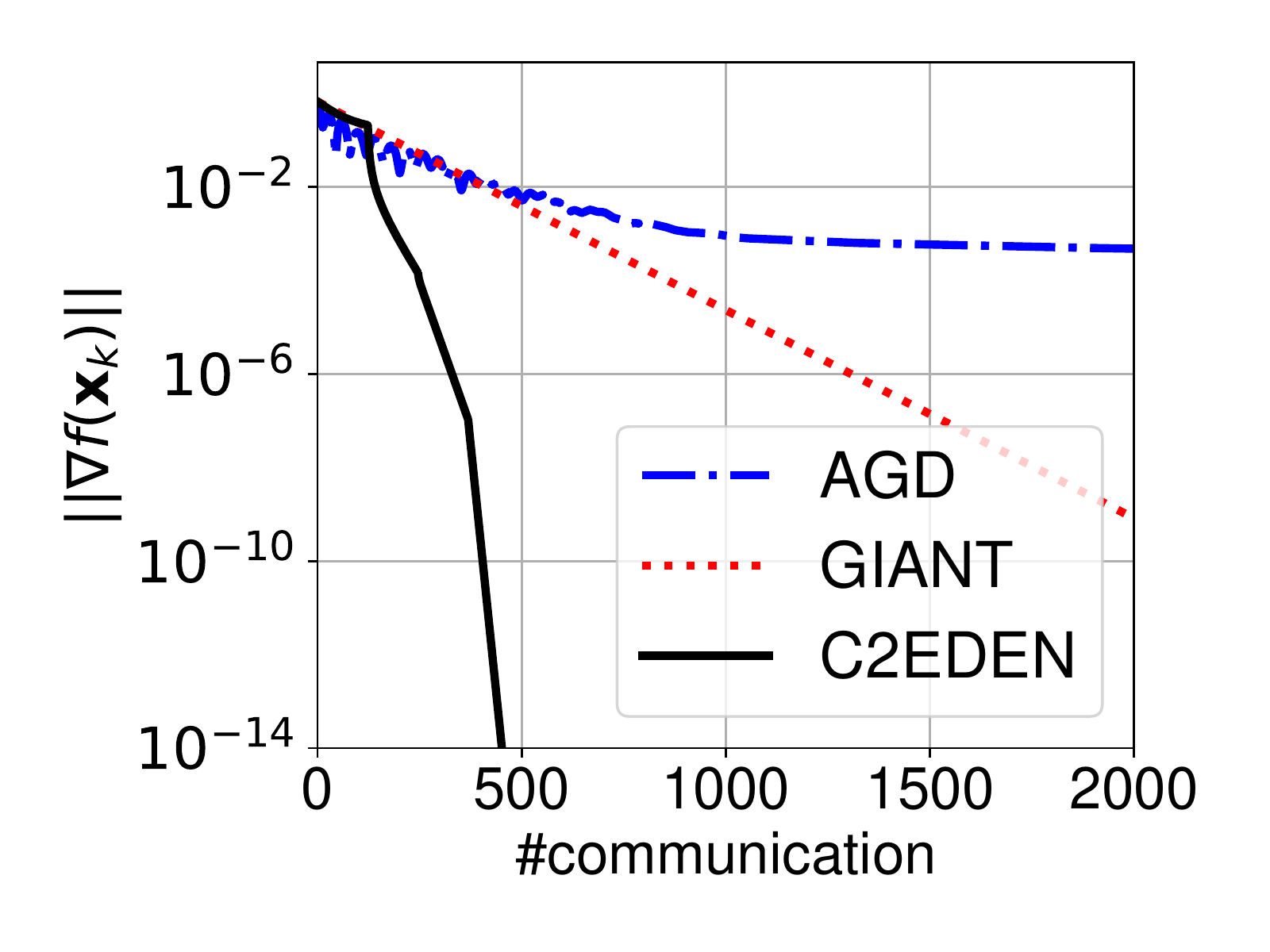} &
    \includegraphics[scale=0.22]{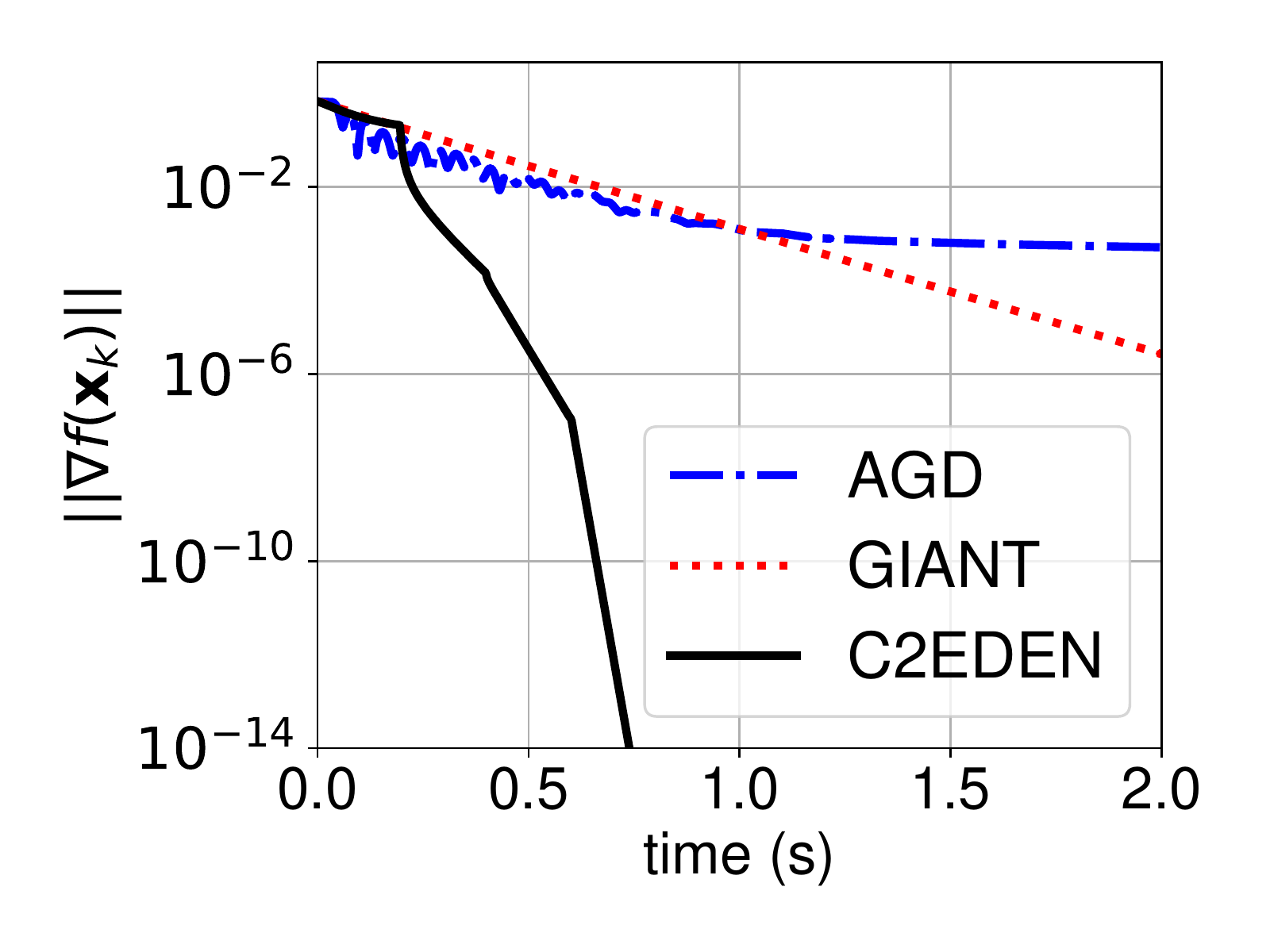}  \\
    (a)   $ \#$communication vs. gap     &
    (b)  time (s) vs. gap     &
    (c) $ \#$communication vs. $ \Vert \nabla f(x_k) \Vert$    &
    (d)   time (s) vs. $ \Vert \nabla f(x_k) \Vert$
    \end{tabular}
    \caption{The results of the model of $\ell_2$-regularized logistic regression on a9a ($n$=16).}
     \label{fig:sc-a9a16}
\end{figure*}

\begin{figure*}[ht]
    \centering
    \begin{tabular}{cccc}
    \includegraphics[scale=0.22]{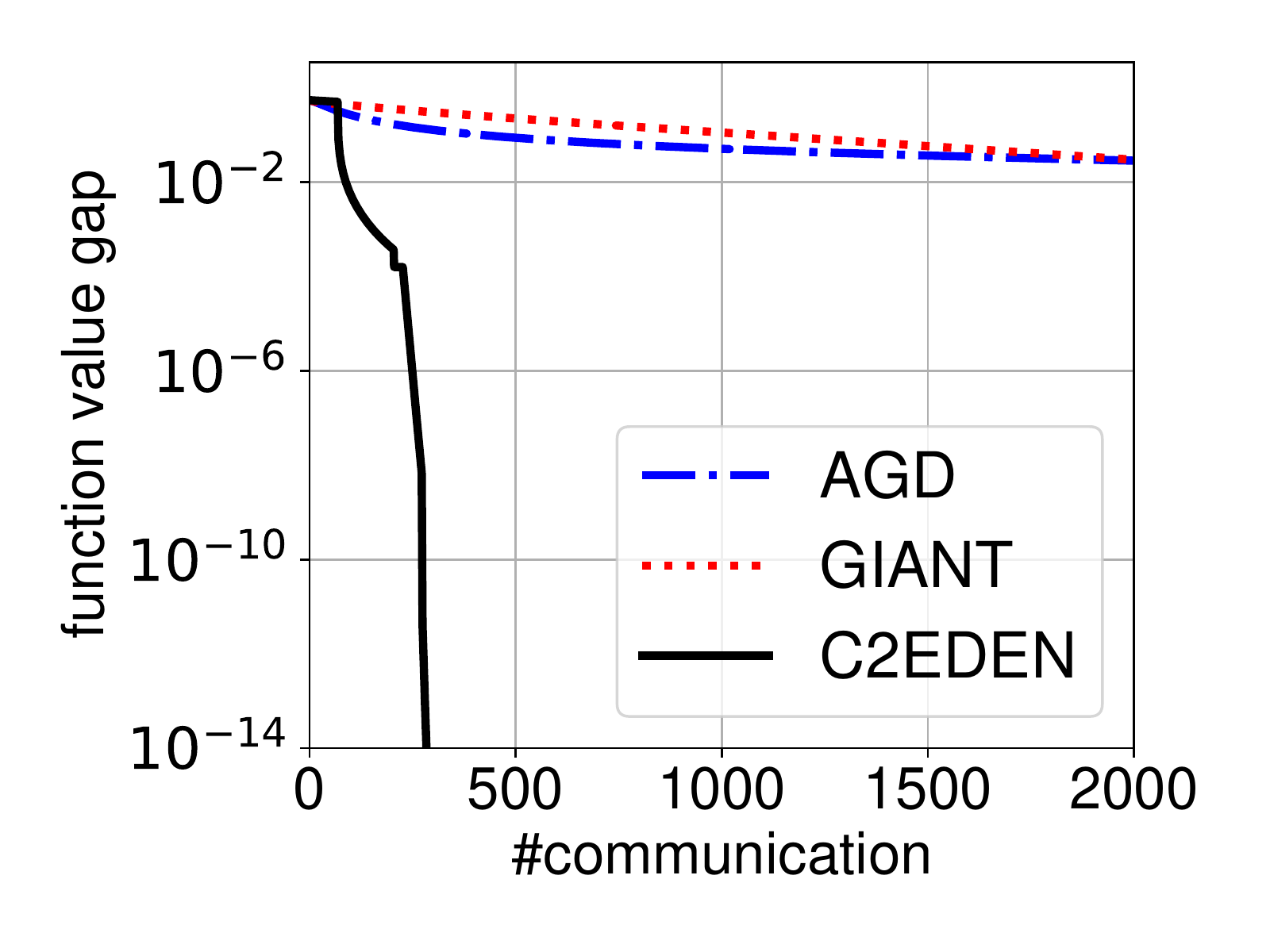}     &  \includegraphics[scale=0.22]{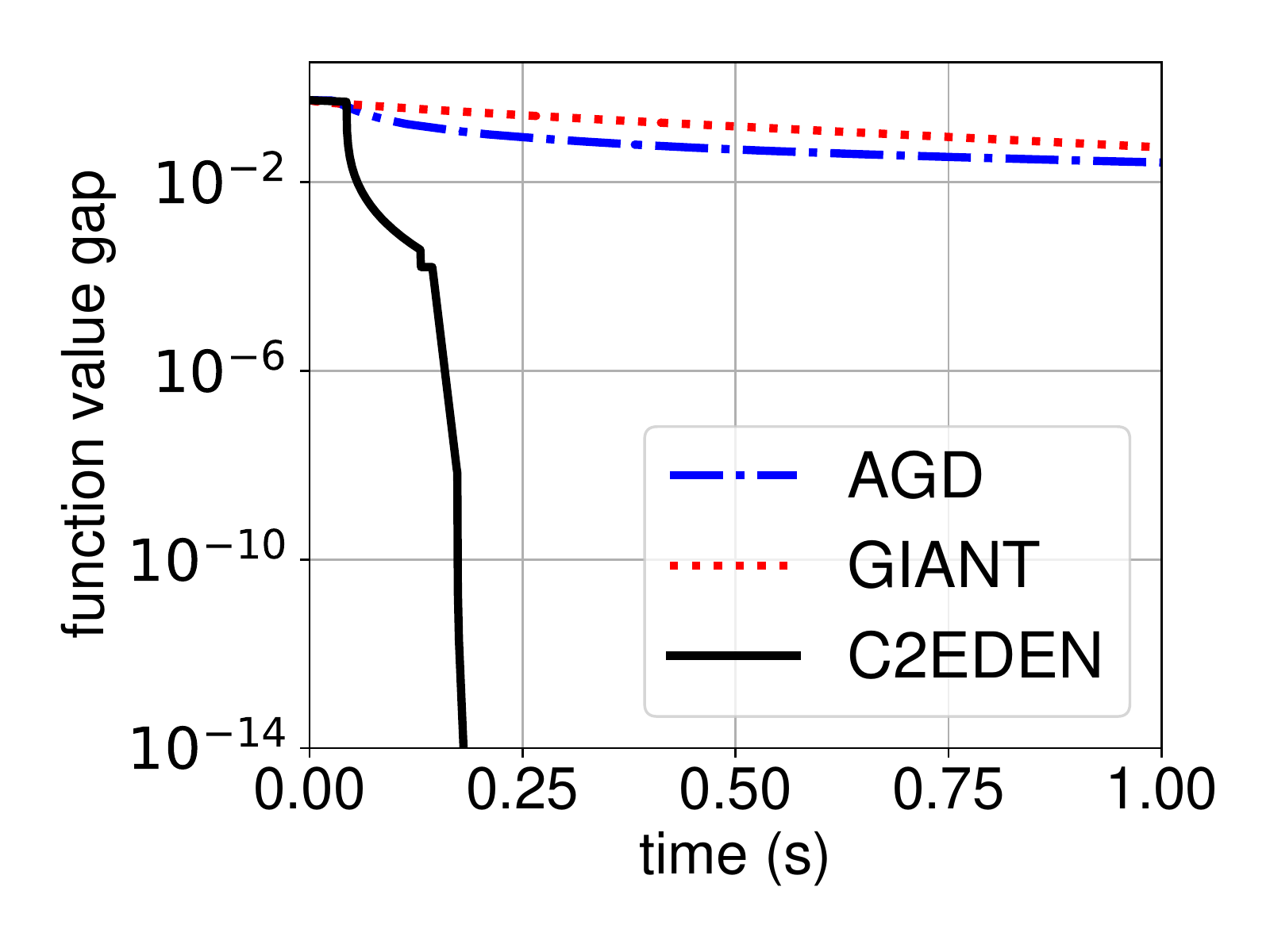} &
    \includegraphics[scale=0.22]{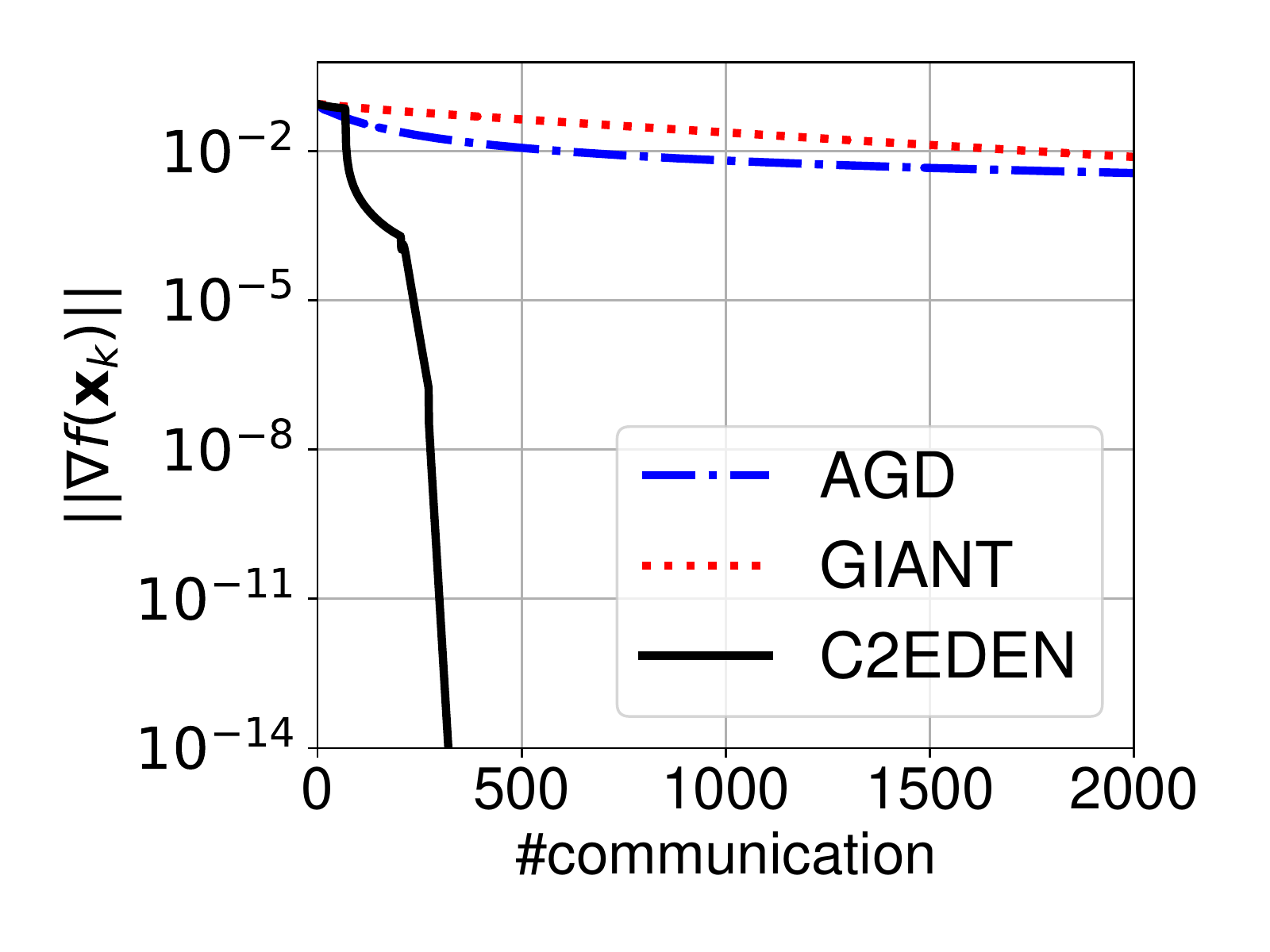} &
    \includegraphics[scale=0.22]{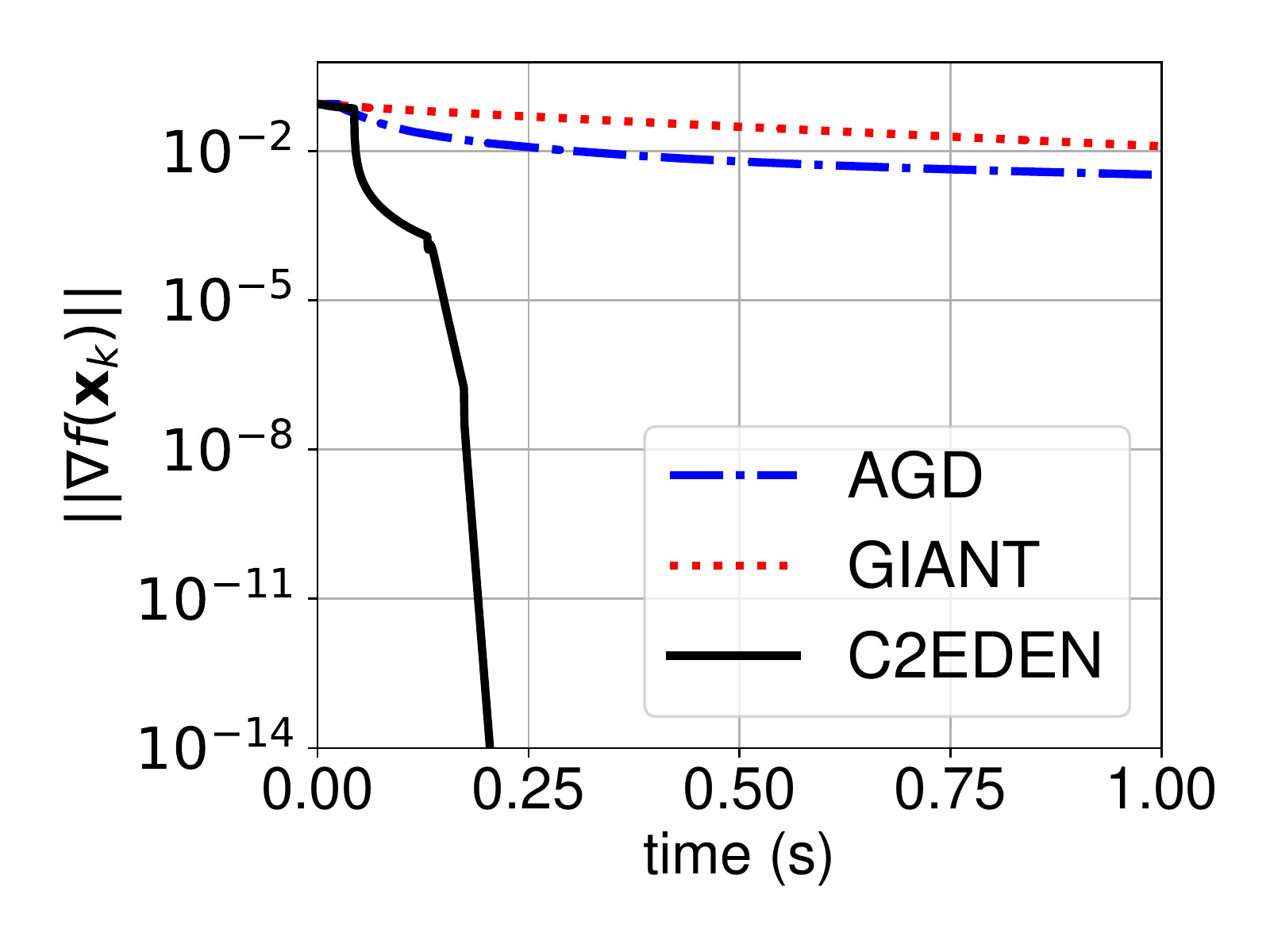}  \\
    (a)   $ \#$communication vs. gap     &
    (b)  time (s) vs. gap      &
    (c) $ \#$communication vs. $ \Vert \nabla f(x_k) \Vert$    &
    (d)   time (s) vs. $ \Vert \nabla f(x_k) \Vert$
    \end{tabular}
    \caption{The results of the model of $\ell_2$-regularized logistic regression on phishing ($n$=16).} 
     \label{fig:sc-phishing16}
\end{figure*}

\begin{figure*}[ht]
    \centering
    \begin{tabular}{cccc}
    \includegraphics[scale=0.22]{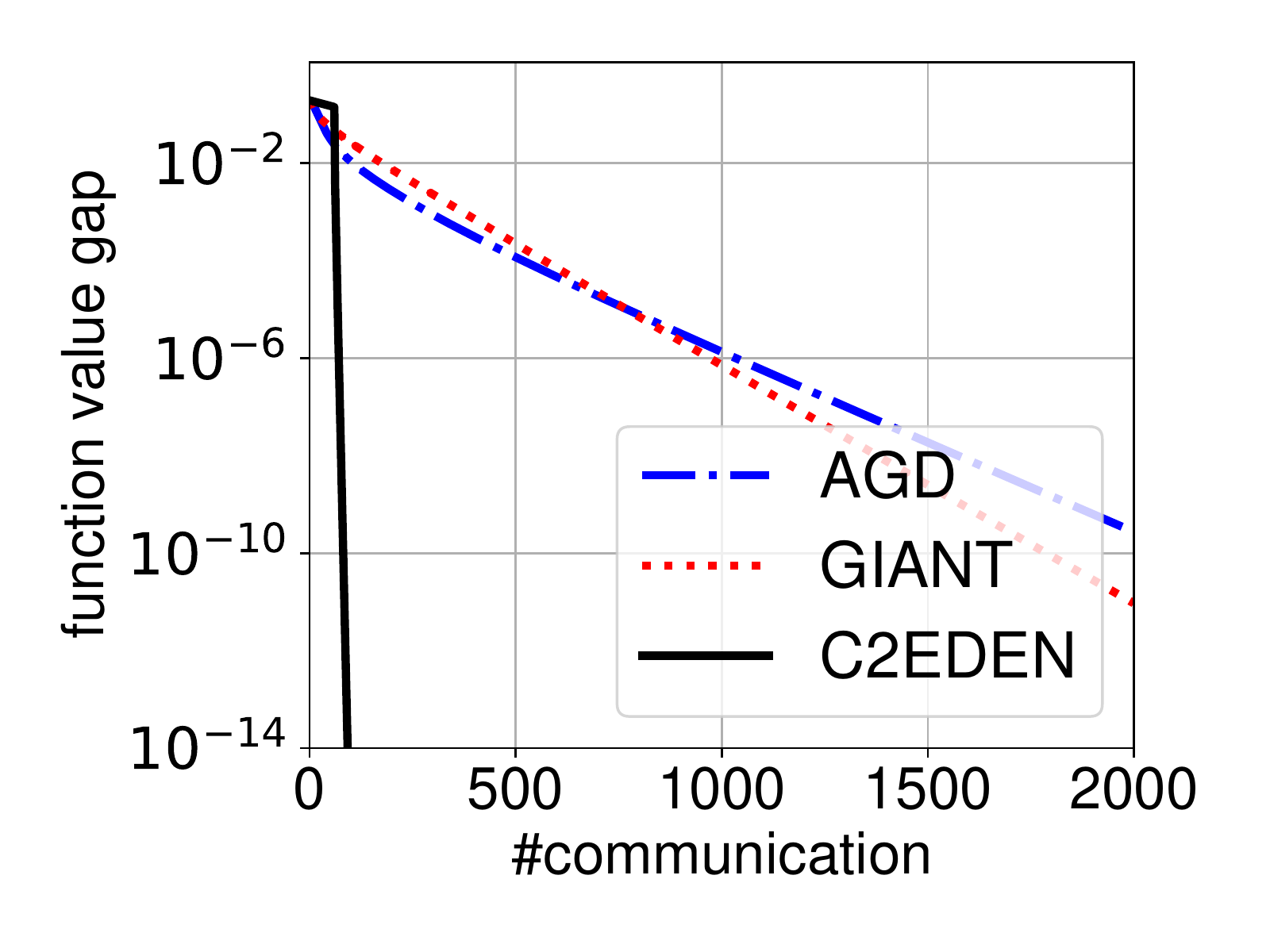}     &  \includegraphics[scale=0.22]{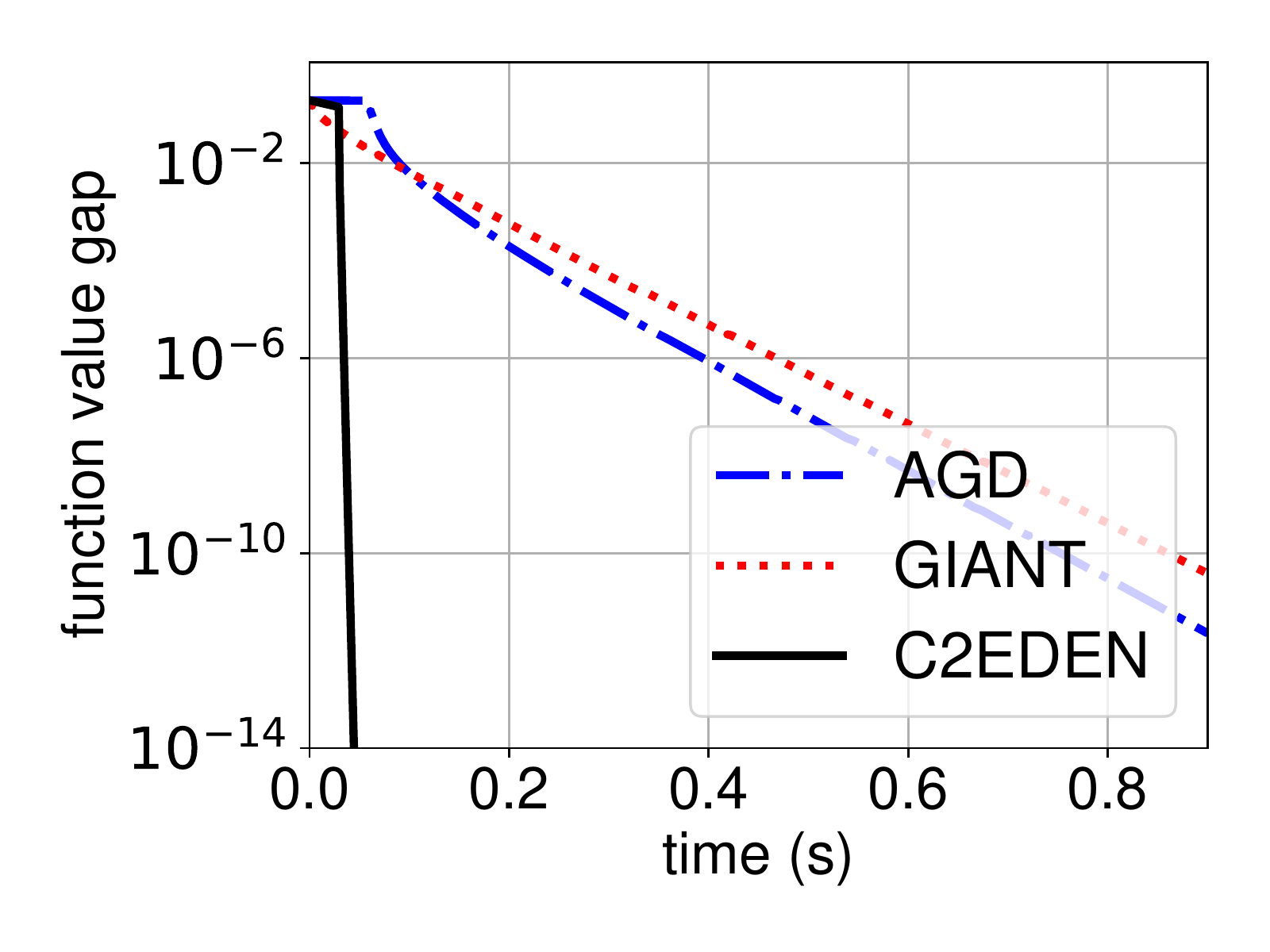} &
    \includegraphics[scale=0.22]{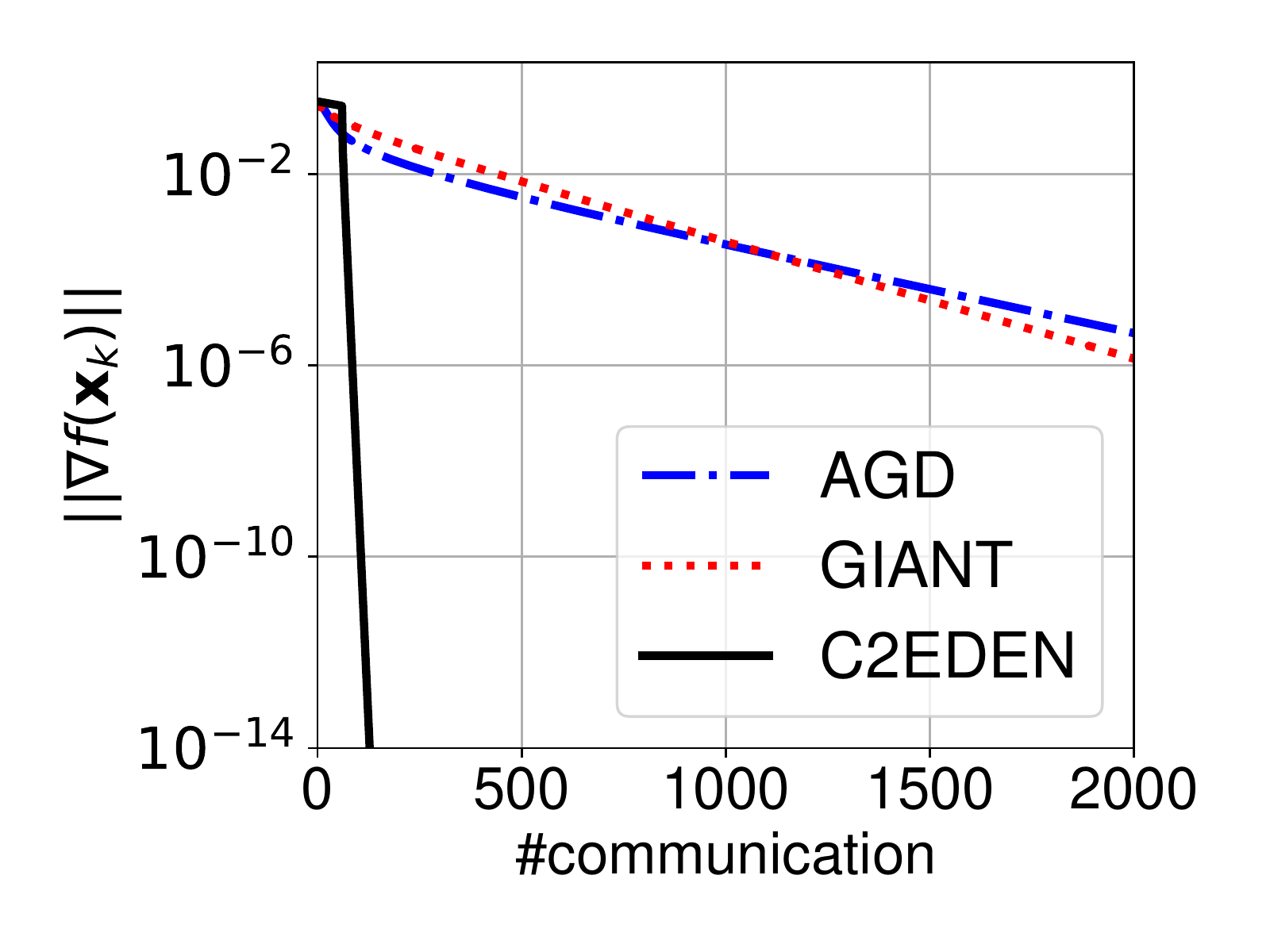} &
    \includegraphics[scale=0.22]{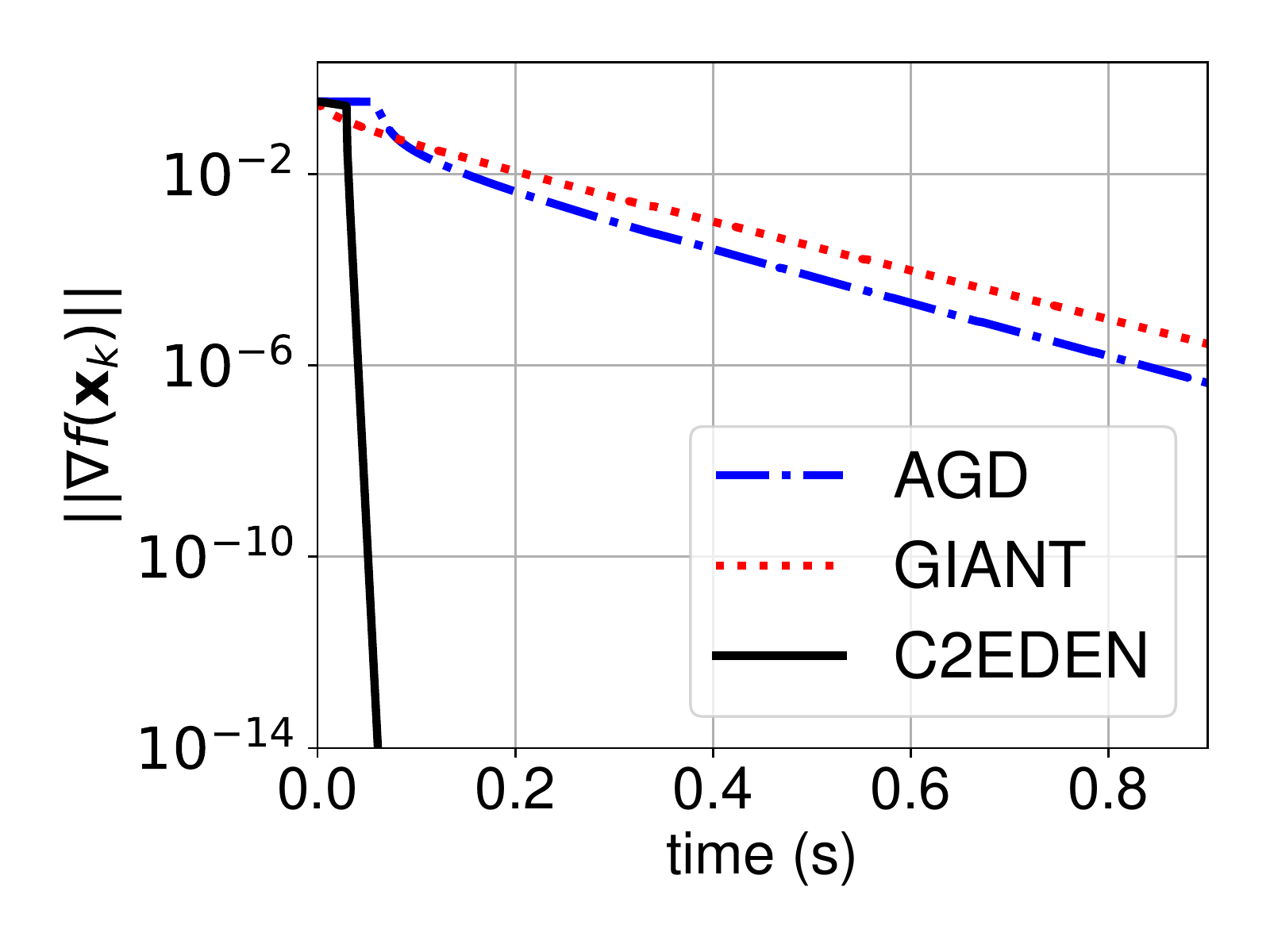}  \\
    (a)   $ \#$communication vs. gap     &
    (b)  time (s) vs. gap      &
    (c) $ \#$communication vs. $ \Vert \nabla f(x_k) \Vert$   &
    (d)   time (s) vs. $ \Vert \nabla f(x_k) \Vert$
    \end{tabular}
    \caption{The results of the model of $\ell_2$-regularized logistic regression on splice ($n$=16).} 
     \label{fig:sc-splice16}
\end{figure*}

In this section, we provide numerical experiments for the proposed Communication and Computation Efficient DistributEd Newton  (C2EDEN) method on regularized logistic regression.
The model is formulated as the form of \eqref{eq:obj} with
\begin{align}\label{eq:logit}
    f_i(\x)= \frac{1}{m_i}\sum_{j=1}^{m_i}\ln \left(1+\exp{(-b_{ij}\x^{\top}\va_{ij})}\right) +\lambda R(\x),
\end{align}
where $\va_{ij}\in\RB^{d}$ and $b_{ij}\in\{-1,+1\}$ are the feature and the corresponding label of the $j$-th sample on the $i$-th client, $m_i$ denotes the number of samples on the $i$-th client, $R(\x)$ is the regularization term.
We set the hyperparameter  $\lambda = 10^{-6}$ and the number of clients~$n=16$ and $n=32$.

Our experiments are conducted on
AMD EPYC 7H12 64-Core Processor with 512GB memory\footnote{The code for our experiments can be downloaded from \url{https://github.com/7CCLiu/C2EDEN} for reproducibility.}. 
We use MPI~3.3.2 and Python~3.9 to implement the algorithms and the operating system is Ubuntu~20.04.2. 
We compare our the proposed C2EDEN with the baseline algorithms that require $\fO(d)$ communication complexity per iteration and avoid $\fO(d^2)$ space complexity on the client.
All of the datasets we used are can be downloaded from LIBSVM repository~\cite{CC01a}.

\subsection{The Nonconvex Case}
We consider logistic regression with the following regularization $ R(\x)= \sum_{p=1}^d\frac{x_{(p)}^2}{1+x_{(p)}^2},$
where $x_{(p)}$ is the $p$-th coordinate of $\vx\in\BR^d$.
Such setting leads to the objective $f(\cdot)$ be nonconvex~\cite{antoniadis2011penalized}.

We compare the proposed C2EDEN with the distributed gradient descent (GD)~\cite{nesterov2018lectures} and the local cubic-regularized Newton (LCRN)~\cite{ghosh2021escaping} method. 
We evaluate all of the algorithms on datasets ``a9a'' ,``w8a'' and ``mushrooms''. 
For GD, we tune the step-size from $\{10^{-1},10^{-2},10^{-3} \}$. 
For C2EDEN and local cubic-Newton, we tune the regularization parameter $M$ from $\{1,10,100\}$.

We present the results for $n=32$ in Figure \ref{fig:nc-a9a}, \ref{fig:nc-w8a} and \ref{fig:nc-mushrooms} and the results for $n=16$ in Figure~\ref{fig:nc-a9a16}, \ref{fig:nc-w8a16} and \ref{fig:nc-mushrooms16} by showing the comparison on the number of iterations and computational time (seconds) against the function value gap $f(\vx_k)-\hat f$ and gradient norm, where $\hat f$ is the smallest function value appears in the iterations of all algorithms. 

All the figures show that both of the function value gap and the gradient norm of C2EDEN decrease much faster than the baselines. 


\subsection{The Strongly-Convex Case}

 Then we consider logistic regression with the $\ell_2$-regularization term $ R(\x) =  \frac{1}{2} \Vert \x \Vert^2$,
which leads to the objective be strongly-convex.

We compare the proposed C2EDEN with the distributed accelerated gradient descent (AGD)~\cite{nesterov2018lectures} and the GIANT~\cite{wang2018giant}. 
We evaluate all of the algorithms on datasets ``a9a'', ``phishing'' and ``splice''. 
For AGD, we tune the step size form $\{10^{-1},10^{-2},10^{-3} \}$ and tune the momentum parameter from $\{0.9, 0.99,0.999\}$. 
We also introduce a warm-up phrase for GIANT to achieve its region of local convergence. 

We present the results for $n=32$ in Figure \ref{fig:sc-a9a}, \ref{fig:sc-phishing} and  \ref{fig:sc-splice} and the results for $n=16$  in Figure \ref{fig:sc-a9a16}, \ref{fig:sc-phishing16} and Figure \ref{fig:sc-splice16}
by showing the comparison on the number of communication rounds and computational time (seconds) against function value gap and gradient norm.

We can observe that C2EDEN performs the superlinear rate from early iterations (i.e. (c) of Figure~\ref{fig:sc-a9a} after $\nabla f(\x_k)\leq 1{\rm e}-2$) and it converges much faster than baselines.

\section{Conclusion}
\label{sec:conclu}
In this work, we propose the Communication and Computation EfficiEnt Distributed Newton (C2EDEN) method for distributed optimization.
We provide a new mechanism to communicate the second-order information along with a simple yet efficient update rule. 
We prove the proposed method possesses the fast convergence rate like the classical second-order methods on single machine optimization problems. 
The empirical studies on both convex and nonconvex optimization problem also support our theoretical analysis.

For the future work, it is very interesting to generalize our ideas to the setting of decentralized optimization. It is also possible to design the stochastic variants of C2EDEN.


\newpage
\balance
\bibliography{example_paper}
\balance
\bibliographystyle{plainnat}





\end{document}